\DeclareMathOperator{\Ab}{Ab}
\DeclareMathOperator{\Gal}{Gal}
\DeclareMathOperator{\Art}{Art}
\DeclareMathOperator{\coker}{coker}
\DeclareMathOperator{\Ext}{Ext}
\DeclareMathOperator{\Pic}{Pic}
\DeclareMathOperator{\Tors}{Tors}
\DeclareMathOperator{\Drv}{D}
\DeclareMathOperator{\Ind}{Ind}
\DeclareMathOperator{\Cl}{Cl}
\DeclareMathOperator{\divis}{div}
\DeclareMathOperator{\Div}{Div}
\DeclareMathOperator{\Cone}{C}
\DeclareMathOperator{\Sh}{Sh}
\newcommand{\C}{\mathcal C}
\DeclareMathOperator{\DIV}{\mathscr{D}\text{\kern -1pt {\normalfont\emph{iv}}}\,}
\DeclareMathOperator{\HOM}{\mathscr{H}\text{\kern -2.5pt {\normalfont\emph{om}}}}
\newcommand{\K}{\mathcal K}
\newcommand{\id}{\operatorname{id}}
\newcommand{\Hom}{{\operatorname{Hom}}}
\newcommand{\RHom}{R\Hom}
\newcommand{\RGamma}{R\Gamma}
\newcommand{\RHOM}{R\kern -2pt\HOM}
\newcommand\blfootnote[1]{%
  \begingroup
  \renewcommand\thefootnote{}\footnote{#1}%
  \addtocounter{footnote}{-1}%
  \endgroup
}
\newcommand{\Spec}{\normalfont \text{Spec }}
\newcommand{\OO}{{\mathcal O}}
\newcommand{\ZZ}{{\mathbb Z}}
\newcommand{\QQ}{{\mathbb Q}}
\newcommand{\FF}{{\mathbb F}}
\newcommand{\GG}{{\mathbb G}}
\newcommand{\et}{\'et\@ifstar{\'e}{e\xspace}}
\newtheorem{thm}{Theorem}
\numberwithin{thm}{section}
\newtheorem{prop}[thm]{Proposition}
\newtheorem{lem}[thm]{Lemma}
\newtheorem{cor}[thm]{Corollary}
\theoremstyle{definition}
\newtheorem{df}[thm]{Definition}
\newtheorem{ex}[thm]{Example}
\theoremstyle{remark}
\newtheorem{rk}[thm]{Remark}
\newcommand{\mmu}{\pmb{\mu}}
\newcommand{\pr}{{\normalfont \text{pr}}}
\newcommand{\Xet}{X_{\normalfont \textsf{\'et}}}
\newcommand{\Yet}{Y_{{\normalfont\textsf{\'et}}}}
\newcommand{\Xcet}{\tilde{X}_{\normalfont \textsf{\'et}}}
\newcommand{\Ycet}{\tilde{Y}_{\normalfont \textsf{\'et}}}
\newcommand{\Yicet}{\tilde{(Y_i)}_{\normalfont \textsf{\'et}}}
\DeclareMathOperator{\Br}{Br}
\DeclareMathOperator{\inv}{inv}
\newcommand{\fl}{\normalfont \textsf{fl}}
\newcommand{\im}{{\normalfont\text{im}}}
\newcommand{\pushright}[1]{\ifmeasuring@#1\else\omit\hfill$\displaystyle#1$\fi\ignorespaces}
\newcommand{\pushleft}[1]{\ifmeasuring@#1\else\omit$\displaystyle#1$\hfill\fi\ignorespaces}
\title{{The \'Etale cohomology ring of the ring of integers of a number field}}
\author{Eric Ahlqvist}
\author{Magnus Carlson}
\begin{document}

\selectlanguage{english}
\begin{abstract}
We compute the cohomology ring $H^*(X,\ZZ/n\ZZ)$ for $X$ the spectrum of the ring of integers of a number field $K$. As an application, we give a non-vanishing formula for an invariant defined by Minhyong Kim.
\end{abstract}

\thanks{The first author was supported by the Swedish Research Council 2015-05554. The second author was sponsored by the Knut and Alice Wallenberg Foundation 2017.0400}

\maketitle

\tableofcontents

\section{Introduction}
\blfootnote{The minor datasets generated during and/or analysed during the current study are available from the corresponding author on reasonable request. The code we used can be found in the following repositiory: \\
https://github.com/ericahlqvist/cup-products.}
\blfootnote{The authors have no conflicts of interest to declare that are relevant to the content of this article.}
Let $K$ be a number field and let $X = \Spec \OO_K$. In this article we compute the \'etale cohomology ring $H^*(X,\ZZ/n\ZZ)$\footnote{If $X$ is not totally imaginary, we use a ``modified \'etale cohomology'' that takes the infinite primes into account.} for arbitrary $n \geq 1$. As an application of this computation, we give a non-vanishing criterion for an abelian ``arithmetic'' Chern--Simons invariant developed by Minhyong Kim. The ring $H^*(X,\ZZ/n\ZZ)$ holds non-trivial arithmetic information: for $n=2$ it has been used by the second author and Tomer Schlank \cite{CarlsonSchlankUnramified} in the study of embedding problems, while Maire \cite{MaireUnramified} used it to study the unramified Fontaine--Mazur conjecture at $p=2$ and the $2$-cohomological dimension of Galois groups with restricted ramification. It is our hope that our computation of $H^*(X,\ZZ/n\ZZ)$ will find interesting applications in the not too distant future. 

We now move on to stating our results, and we assume for simplicity of exposition that $K$ is a totally imaginary number field. However, the theorems in the main text are stated for arbitrary number fields, under the added proviso that one works with \'etale cohomology on the Artin--Verdier site (see Definition \ref{def:Xcet}). Recall that Artin--Verdier duality allows us to identify $H^i(X,\ZZ/n\ZZ)$ with $\Ext^{3-i}_X(\ZZ/n\ZZ,\mathbb{G}_{m,X})^\sim$, where $^\sim$ denotes the Pontryagin dual. One can further identify $\Ext^1_X(\ZZ/n\ZZ,\mathbb{G}_{m,X})$ with the group $Z_1/B_1$, (see Corollary \ref{cor:valuesofext}) where
  \[
    \begin{split}
      Z_1 & = \{(a,\mathfrak{a}) \in K^\times \oplus \Div K : \divis(a)+n\mathfrak{a} = 0\}\,, \\
      B_1 & = \{(b^{-n},\divis(b)) \in K^\times \oplus \Div K : b \in K^\times \}\,,
    \end{split}
  \]
$\Div K$ is the group of fractional ideals, and $\divis(a)$ is the fractional ideal associated to $a$.
Hence we obtain the following list (see Corollary \ref{cor:valuesofext}):
  \[
    H^i(X,\ZZ/n\ZZ) = 
      \begin{cases} 
        \ZZ/n\ZZ & \mbox{ if } i= 0 \\
        (\Pic(X)/n)^\sim & \mbox{ if } i= 1 \\
        (Z_1/B_1)^\sim & \mbox{ if } i=2 \\
        \mmu_n(X)^\sim & \mbox{ if } i=3 \\
        0 &  \mbox{ if } i>3.
      \end{cases}
  \]

An element $x \in H^1(X,\ZZ/n\ZZ)$ is given by a $\ZZ/n\ZZ$-torsor $Y\to X$, which corresponds to a cyclic unramified extension $L/K$ of degree $d|n$ together with a choice of generator $\sigma \in \Gal(L/K)$.
The structure of the cup product map $H^1(X,\ZZ/n\ZZ) \times H^1(X,\ZZ/n\ZZ) \rightarrow H^2(X,\ZZ/n\ZZ)$ is given by the following Proposition.

\begin{prop}
Let $X = \Spec \OO_K$ be the ring of integers of a totally imaginary number field $K$, and identify $H^2(X,\ZZ/n\ZZ)$ with $\Ext^1_{X}(\ZZ/n\ZZ,\mathbb{G}_{m,X})^\sim$, where $^\sim$ denotes the Pontryagin dual. Let $x \in H^1(X,\ZZ/n\ZZ)$ be an element corresponding to a cyclic unramified extension $L/K$ of degree $d|n$ and a choice of generator $\sigma \in \Gal(L/K)$. For an element $y \in H^1(X,\ZZ/n\ZZ) \cong (\Cl K/n \Cl K)^\sim$ represented by an unramified cyclic extension $M/K$, we have that 
  \[
    \langle x \cup y,(a,\mathfrak{b})\rangle =  \langle y,N_{L|K}(I)^{n/d}+\frac{n^2}{2d} \mathfrak{b}\rangle
  \] 
where $(a,\mathfrak{b}) \in \Ext^1_{X}(\ZZ/n\ZZ,\mathbb{G}_{m,X})$ and $I \in \Div L$ is any fractional ideal such that $\mathfrak{b}^{n/d} \mathcal{O}_L =I-\sigma(I) + \divis(t)$ for some $t \in L^\times$ such that $N_{L|K}(t) =a^{-1}$. In particular, $\langle x \cup y,(a,\mathfrak{b})\rangle = 0$ if and only if $\frac{n^2}{2d} \mathfrak{b}+N_{L|K}(I)^{n/d}$ is in the image of $N_{M|K}$.
\end{prop}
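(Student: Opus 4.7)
My approach is to compute $\langle x \cup y, (a, \mathfrak b)\rangle$ at the cocycle level, combining the description $\Ext^1_{\tilde{X}}(\ZZ/n\ZZ_{\tilde{X}}, \phi_*\mathbb{G}_{m,X}) = Z_1/B_1$ from Corollary~\ref{cor:valuesofext} with a descent argument along the cyclic cover $\pi: \tilde{X}_L \to \tilde{X}$ that trivialises $x$. By Artin--Verdier duality the pairing is the image of $(x \cup y) \cup (a, \mathfrak b)$ under the trace $H^3(\tilde{X}, \phi_* \mathbb{G}_{m,X}) \xrightarrow{\sim} \QQ/\ZZ$; since $\pi^* x = 0$, the pulled-back cocycle splits on $\tilde{X}_L$, and $t$ and $I$ are the chain-level witnesses needed to evaluate the resulting pairing after corestriction.

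Concretely, the hypothesis $N_{L|K}(t) = a^{-1}$ trivialises the ``section part'' of the pulled-back extension $\pi^*(a, \mathfrak b)^{n/d}$, since on $L$ the element $a$ becomes a norm; the relation $\mathfrak b^{n/d}\OO_L = I - \sigma(I) + \divis(t)$ then lifts this trivialisation to the divisor level in a $\sigma$-equivariant way. Corestricting along $N_{L|K}$ produces $N_{L|K}(I)^{n/d} \in \Div K$, and evaluating the character $y$ (viewed via $\Cl K/n\Cl K$) on its class yields the principal term $\langle y, N_{L|K}(I)^{n/d}\rangle$. The ``if and only if'' clause then follows immediately from class field theory, which identifies the kernel of $y: \Cl K \to \ZZ/n\ZZ$ with $N_{M|K}(\Cl M)$ for the unramified cyclic extension $M/K$ classified by $y$.

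The correction $\tfrac{n^2}{2d}\mathfrak b$ is the subtle point. It arises because the standard 2-cocycle for $x \cup y$ on degree-1 classes with $\ZZ/n\ZZ$-coefficients is not skew-symmetric: the discrepancy between $x \cup y$ and $-y \cup x$ picks up a self-cup $x \cup x$ contribution, which by a standard calculation for the sequence $0 \to \ZZ/n \to \ZZ/n^2 \to \ZZ/n \to 0$ equals a multiple of the Bockstein of $x$; combined with the factor $\tfrac{n}{2d}$ coming from the order $d$ of the image of $x$ and paired with the divisor component $\mathfrak b$, this produces exactly $\tfrac{n^2}{2d}\mathfrak b$. I expect the main technical obstacle to be handling this factor of $\tfrac{1}{2}$ uniformly for both parities of $n$---when $n$ is odd one inverts $2$ modulo $n$, while for even $n$ one must verify that $\tfrac{n^2}{2d}$ lifts to a well-defined integer modulo $n$---and to keep sign conventions consistent when translating between Yoneda cocycles, the Hochschild--Serre spectral sequence for $L/K$, and the Artin--Verdier pairing.
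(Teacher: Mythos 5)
Your overall shape is right---trivialise $x$ over $L$, use $t$ and $I$ as chain-level witnesses, push back down to $K$, and finish with Artin reciprocity identifying $\ker y$ with $N_{M|K}(\Cl M)$---and this is indeed the skeleton of the paper's argument, which realises the ``descent/corestriction'' step concretely as the connecting homomorphism of the norm sequence $0\to\ker N\to p_*p^*\ZZ/n\ZZ\to\ZZ/n\ZZ\to 0$, dualised and computed on explicit resolutions. But the proposal has two genuine gaps. First, you take the existence of the pair $(I,t)$ for granted. That such $I\in\Div L$ and $t\in L^*$ exist is a nontrivial input: one must observe that $\mathfrak{b}^{n/d}\OO_L$ lies in the kernel of $N_{L|K}\colon\Cl L\to\Cl K$ (because $n\mathfrak{b}=-\divis(a)$ is principal), apply Furtw\"angler's theorem to write it as $\mathfrak{b}'-\sigma(\mathfrak{b}')+\divis(a')$, and then use the Hasse norm theorem together with Hilbert's Theorem 90 for ideals to adjust the witness so that $N_{L|K}(t)=a^{-1}$ exactly. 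Without this the right-hand side of the formula is not even defined, and one also needs (implicitly) that the answer is independent of the choice of $(I,t)$.

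Second, and more seriously, your proposed mechanism for the term $\frac{n^2}{2d}\mathfrak{b}$ does not work. For distinct degree-one classes the identity $x\cup y=-y\cup x$ holds on the nose in cohomology; there is no ``self-cup $x\cup x$ contribution'' to $x\cup y$, and the Bockstein of $x$ plays no role here. The actual source of the term is concrete: after the first reduction one is left with the element $t^{n}b^{n/d}\in\ker(N_{L|K}\colon L^*\to K^*)$, one exhibits it as $w/\sigma(w)$ for the explicit Hilbert-90 witness $w=t^{n}\,\sigma(t^{n-\frac{n}{d}})\cdots\sigma^{d-1}(t^{\frac{n}{d}})$, and computing $\divis(w)$ from the relation $\divis(t)=\mathfrak{b}^{n/d}\OO_L-I+\sigma(I)$ yields $\frac{n(d+1)}{2}\mathfrak{b}^{n/d}$ plus norm terms, which equals $\frac{n^2}{2d}\mathfrak{b}+\frac{n}{d}N_{L|K}(I)$ in $\Cl K/n\Cl K$. (Note $\frac{n^2}{2d}\equiv 0\bmod n$ unless $n$ is even and $n/d$ is odd, so the term is indeed a $2$-torsion phenomenon---but it is produced by this divisor computation, not by a symmetry defect of the cup product.) To complete your argument you would have to carry out this, or an equivalent, explicit reduction; the heuristic you give would not recover the correct coefficient.
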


It should be noted that the formula for this cup product is closely related to a pairing of McCallum--Sharifi, as discussed in Remark \ref{rk:sharifi}. Our computation of the map $H^2(X,\ZZ/n\ZZ) \times H^1(X,\ZZ/n\ZZ) \rightarrow H^3(X,\ZZ/n\ZZ)$ is captured by:

\begin{prop}
Let $X = \Spec \OO_K$ be the ring of integers of a totally imaginary number field $K$ and identify $H^2(X,\ZZ/n\ZZ)$ and $H^3(X,\ZZ/n\ZZ)$ with $\Ext^1_{X}(\ZZ/n\ZZ,\mathbb{G}_{m,X})^\sim$ and $\mu_n(K)^\sim$ respectively, where $^\sim$ denotes the Pontryagin dual. Let $x \in H^1(X,\ZZ/n\ZZ)$ be a element corresponding to a cyclic extension $L/K$ of degree $d|n$, unramified at all places, together with a choice of generator $\sigma \in \Gal(L/K)$. Let $\xi \in \mu_n(K)$ and choose $b \in L^\times$ such that $\xi^{n/d} = \sigma(b)/b$, and let $a \in K^\times$ and $\mathfrak{a} \in \Div K$ be such that $\mathfrak{a}\OO_L = \divis(b)$ and $a = b^{-n}$ in $L^\times$. If $y \in H^2(X,\ZZ/n\ZZ)$, then 
  \[
    \langle x \cup y,\xi\rangle =  \langle y,(a,\mathfrak{a})\rangle\,.
  \]
\end{prop}

Let $\inv\colon H^3(X,\mmu_n) \rightarrow \ZZ/n\ZZ$ be the invariant isomorphism used in Artin--Verdier duality, which is defined by viewing $H^3(X,\mmu_n)$ as a subgroup of $H^3(X,\mathbb{G}_{m,X})$, which is isomorphic to $\QQ/\ZZ$ via class field theory, and then identifying $\ZZ/n\ZZ$ with the $n$-torsion of $\QQ/\ZZ$. When $K$ contains all roots of unity, we may choose an isomorphism of \emph{small} \'etale sheaves $\ZZ/n\ZZ\to \mmu_n$ and think of elements in $H^1(X,\ZZ/n\ZZ)$ as equivalence classes of pairs $(v, \mathfrak{a})\in K^\times\oplus \Div(K)$ such that $\divis(v)+n\mathfrak{a}=0$ and such that $K(v^{1/n})$ is an unramified extension of $K$. Our formula for Kim's invariant is as follows.

\begin{prop}
Let $K$ be a totally imaginary number field containing a primitive $n$th root of unity and let $X= \Spec \OO_K$ be its ring of integers. Let $C_n=\ZZ/n\ZZ$, let $c_1 \in H^1(C_n,\ZZ/n\ZZ) \cong \Hom_{\Ab}(\ZZ/n\ZZ,\ZZ/n\ZZ)$ correspond to the identity and $c_2 = \beta_n(c_1) \in H^2(C_n,\ZZ/n\ZZ)$ be its image under the Bockstein homomorphism. Suppose that we have a continuous homomorphism $f\colon \pi_1(X,x) \rightarrow C_n$, corresponding to the unramified Kummer extension $L/K$, where $L=K(v^{1/n})$ for some $v \in K^\times$ such that there is an $\mathfrak{a} \in \Div K$ satisfying $n \mathfrak{a} = - \divis(v)$. Then Kim's invariant, $\inv(f_X^*(c)) \in \ZZ/n\ZZ$ vanishes if and only if $\mathfrak{a}$ is in the image of the norm map $N_{L|K} \colon \Cl L \rightarrow \Cl K$, i.e. if and only if $\Art_{L|K}(\mathfrak{a})=0$, where $\Art_{L|K}$ means the Artin symbol. 
\end{prop}

Lastly, the methods used to compute $H^*(X,\ZZ/n\ZZ)$ are similar to the methods used to compute $H^*(X,\ZZ/2\ZZ)$ in \cite{CarlsonSchlankUnramified}. However, there are some new insights needed in order to generalize the computation to arbitrary $n$.

\subsection{Organization}
In Section \ref{sec:background} we recall some material on the \'etale cohomology of a number field and the Artin--Verdier site. What we cover in this section is classical and can be found in \cite{BienenfeldEtale} and \cite{MazurNotes}. In Section \ref{sec:cup} we then move on to determine the structure of $H^*(X,\ZZ/n\ZZ)$. In Section \ref{sec:kim} we first recall the invariant defined by Minhyong Kim in \cite{KimArithmetic}, whereafter we state the non-vanishing criteria.

\section{Background on the \'etale cohomology of a number field} \label{sec:background}
Let $K$ be a number field and $X = \Spec \OO_K$ its ring of integers. In the beautiful paper \cite{MazurNotes}, Mazur investigates the \'etale cohomology of totally imaginary number fields. From the point-of-view of \'etale cohomology, $X$ behaves as a $3$-manifold and satisfies an arithmetic version of Poincaré duality, namely Artin--Verdier duality. This duality states that for any constructible sheaf $F$, the \'etale cohomology group $H^i(X,F)$ is Pontryagin dual to $\Ext^{3-i}_X(F,\mathbb{G}_{m,X})$, where $\mathbb{G}_{m,X}$ is the sheaf of units. For fields $K$ that are not totally imaginary, Artin--Verdier duality holds only modulo the $2$-primary part. To remedy this, one must instead consider constructible sheaves on a modified \'etale site of $X$ which takes the infinite primes into account. The purpose of this section is to recall some results from \cite{BienenfeldEtale} where the duality results we will need are proven. In an appendix to \cite{HaberlandGalois}, Zink removes the $2$-primary restriction as well, but works with a modified cohomology which we will not use. For a very readable account of Zink's results, we recommend \cite{ConradEtale}.

\subsection{The Artin--Verdier site of a number field} \label{subsec:artinsite}
In the following subsection we recall some of the main results that we need from \cite{BienenfeldEtale}. We emphasize that the results of this subsection (\ref{subsec:artinsite}) are not new, but due to the heavy amount of (possibly non-standard) notation, we include it to avoid confusion. 

If $X$ is the ring of integers of a number field, we let 
  \[
    X_\infty = \{x_1,\ldots, x_n\}
  \] 
be the infinite primes of $X$. An infinite prime is either a real embedding $K \rightarrow \mathbb{R}$ or a pair of conjugate complex embeddings of $K$ into $\mathbb{C}$. If $Y$ is a scheme that is \'etale over $X$, a real archimedean prime of $Y$ is a point $y\colon \Spec \mathbb{C} \rightarrow Y$ which factors through $\Spec \mathbb{R}$. If $y$ does not factor through $\Spec \mathbb{R}$, then by the conjugation action on $\mathbb{C}$ we obtain a point $\bar{y} \neq y$.
A complex prime of $Y$ is a pair of points $y_1,y_2 \colon \Spec \mathbb{C} \rightarrow Y$ such that $y_1 \neq y_2$ and $y_1 = \bar{y}_2$. Finally, we define $Y_\infty$ to be the set whose elements are the real and complex primes of $Y$.

\begin{df} \label{def:Xcet}
The Artin--Verdier site of $X$, denoted $\Xcet$, is the site with objects pairs $(Y,M)$, where $g \colon Y \rightarrow X$ is a scheme that is separated and \'etale over $X$, $M \subset Y_\infty$, and $g(M) \subset X_\infty$ is unramified, i.e, if $p \in M$ is a complex prime, then its image is complex as well. A morphism 
  \[
    f\colon(Y_1,M_1) \to (Y_2,M_2)
  \] 
in $\Xcet$ is a morphism of $X$-schemes such that $f(M_1) \subset M_2$. A family of morphisms $\{f_i\colon(Y_i,M_i) \rightarrow (W,N)\}_{i \in I}$ is a covering if $\cup_i f_i(Y_i) = W$ and $\cup_i f_i(M_i) = N$.
\end{df}

Note that any morphism $f\colon(Y_1,M_1) \rightarrow (Y_2,M_2)$ in $\Xcet$ has the property that $f\colon Y_1 \rightarrow Y_2$ is \'etale and that $f\colon M_1 \rightarrow M_2$ is unramified, i.e., if $p \in M_1$ is a complex prime, then $f(p)$ is complex as well. The fact that Definition \ref{def:Xcet} gives a site is found in \cite[Prop. 1.2]{BienenfeldEtale}. We define $\Sh(\Xcet)$ to be the category of abelian sheaves on $\Xcet$. It will be convenient to have a more concrete description of $\Sh(\Xcet)$. As above, we let $X_\infty = \{x_1,\ldots, x_n\}$ be the infinite primes of $X$. Fix a separable closure $\bar{K}$ of $K$. For each infinite prime $x_i$, we fix an extension $\widetilde{x_i}$ of $x_i$ to $\bar{K}$. We then let $I_{\widetilde{x_i}}$ be the decomposition group of $\widetilde{x_i}$ (note that $I_{\widetilde{x_i}} \cong \ZZ/2\ZZ$ if $x_i$ is real, and that $I_{\widetilde{x_i}} $ is trivial if $x_i$ is complex). Since $I_{\widetilde{x_i}} \subset \Gal(\bar{K}/K)$, if we let $j \colon \Spec K \rightarrow X$ be the map induced from the inclusion, we see that for any \'etale sheaf $F$ on $X$, the pull-back $j^*F$, viewed as a Galois module, has a natural action of $I_{\widetilde{x_i}}$, and that we thus can take the fixed points with respect to this action and form $(j^*F)^{I_{\widetilde{x_i}}}$. One can also view $x_i$ as giving an absolute value on $K$; let $K_{x_i}$ be the completion with respect to $x_i$. If then $i \colon \Spec K_{x_i} \rightarrow X$ is the natural map, then $(j^*F)^{I_{\widetilde{x_i}}}$ is isomorphic to the global sections of $i^*F$, and we will sometimes write $F(K_{x_i})$ to denote $(j^*F)^{I_{\widetilde{x_i}}}$.  

In preparation of the following definition, consider the category of finite sets over $X_\infty$, i.e., the category whose objects are given by pairs $(A,f)$ where $A$ is a finite set and $f \colon A \rightarrow X_\infty$ is a function, and where the morphisms between objects are given by commutative triangles. This category becomes a Grothendieck site if we define a covering to be given by surjective morphisms. We then let $\Sh(X_\infty)$ be the category of sheaves on this site. Note that to give a sheaf $F_\infty \in \Sh(X_\infty)$ is the same as giving a collection of abelian groups, $F_x$, one for each $x \in X_\infty$.

\begin{df} \label{def:triples}
The category $\mathcal{S}_X$ is the category whose objects are given by triples 
  \[
    S = (F_\infty,F,\{\sigma_{x}\}_{x \in X_\infty})
  \] 
where $F_\infty = \{F_\infty(x) \}_{x \in X_\infty} \in \Sh(X_\infty)$, i.e. $F_\infty$ is a product of abelian groups, one for each infinite prime, $F$ is an abelian sheaf on $\Xet$ and for $x \in X_\infty$,  $\sigma_{x} \colon F_\infty(x) \rightarrow (j^*F)^{I_{\tilde{x}}}$ is a morphism of abelian groups. A morphism 
  \[
    f \colon S_1 = (F_\infty,F,\{\sigma_x\}_{x \in X_\infty}) \rightarrow S_2 = (F'_\infty,F',\{\sigma'_x\}_{x \in X_\infty})
  \] 
is a pair of maps $f_1 \colon F_\infty \rightarrow F'_\infty, f_2\colon F \rightarrow F'$ commuting with $\sigma_x,\sigma'_x,  x \in X_\infty$, upon pulling back $f_2$ by $j \colon \Spec K \rightarrow X$.
\end{df}

The following proposition shows precisely that the above definition gives us a concrete description of $\Sh(\Xcet)$.

\begin{prop}[{\cite[Prop. 1.2]{BienenfeldEtale}}] \label{prop:BienenEquiv}
The category $\mathcal{S}_X$ defined above is equivalent to the category of abelian sheaves on $\Xcet$.
\end{prop}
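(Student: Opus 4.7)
The plan is to construct mutually quasi-inverse functors $\Phi\colon \Sh(\Xcet) \to \mathcal{S}_X$ and $\Psi\colon \mathcal{S}_X \to \Sh(\Xcet)$. The functor $\Phi$ will be essentially restriction to two natural subsites of $\Xcet$: the full subcategory on objects $(Y,\emptyset)$ with $Y \to X$ separated and étale is equivalent to $\Xet$ (with the inherited coverings being exactly the étale coverings), while the full subcategory on objects $(X,\{x\})$ with $x \in X_\infty$ essentially records a copy of $X_\infty$. Concretely, given $\mathcal{F} \in \Sh(\Xcet)$, I would set $F := \mathcal{F}|_{\Xet}$ and $F_\infty(x) := \mathcal{F}(X,\{x\})$.

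To produce the compatibility $\sigma_x\colon F_\infty(x) \to (j^*F)^{I_{\tilde x}}$, I would use the morphisms $(Y,\emptyset) \to (Y,\{y\}) \to (X,\{x\})$ in $\Xcet$ (existing whenever $y \in Y_\infty$ lies over $x$ with the required unramifiedness), which produce natural restriction maps $\mathcal{F}(X,\{x\}) \to \mathcal{F}(Y,\{y\}) \to F(Y)$. Passing to the colimit over étale neighborhoods of the chosen extension $\tilde x$ of $x$—where the condition that $g(M) \subset X_\infty$ be unramified is precisely what cuts the system down to neighborhoods computing the $I_{\tilde x}$-invariant stalk—yields $\sigma_x$.

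For the quasi-inverse I would define
$$\Psi(F_\infty,F,\{\sigma_x\})(Y,M) := \bigl\{(s,(s_y)_{y \in M}) : s \in F(Y),\ s_y \in F_\infty(g(y)),\ \sigma_{g(y)}(s_y) = \rho_y(s)\bigr\},$$
where $\rho_y\colon F(Y) \to (j^*F)^{I_{\widetilde{g(y)}}}$ is induced by a chosen lift of $y$ to $\bar K$ above $\widetilde{g(y)}$. Checking that $\Psi(F_\infty,F,\{\sigma_x\})$ is a sheaf reduces to separately checking the two halves of the defining covering condition: an étale cover $\{Y_i \to W\}$ of the underlying scheme (handled by the sheaf condition on $F$) together with a set-cover $\bigcup_i f_i(M_i) = N$ on the infinite part (which is trivially a sheaf condition for a presheaf on $X_\infty$). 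The compatibility built into the definition guarantees that the two are glued correctly.

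Showing $\Phi\circ\Psi \simeq \id$ is essentially by inspection of the defining formula, while $\Psi\circ\Phi \simeq \id$ follows by applying the sheaf condition for $\mathcal{F}$ to the natural covering $\{(Y,\emptyset)\} \cup \{(Y,\{y\})\}_{y \in M} \to (Y,M)$ and identifying the resulting fiber product with the compatibility condition defining $\Psi(\Phi(\mathcal{F}))(Y,M)$. I expect the main obstacle to be the careful bookkeeping around the infinite primes, particularly verifying that the restriction maps in $\Xcet$ between $\mathcal{F}(Y,\{y\})$ and $\mathcal{F}(Y,\emptyset)$ assemble (after taking colimits over étale neighborhoods of $\tilde x$) into a well-defined map into $(j^*F)^{I_{\tilde x}}$, and that the unramifiedness constraint on morphisms in $\Xcet$ correctly restricts the indexing category so that this colimit really is the $I_{\tilde x}$-invariant stalk rather than the full one. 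Once this identification is in place, both composites are computed by a direct cover-and-glue argument.
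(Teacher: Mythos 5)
The paper does not prove this proposition itself---it is quoted verbatim from Bienenfeld---so I can only judge your argument on its own terms. Your overall architecture (a recollement-style gluing, with $\Psi$ defined as the fiber product $F(Y)\times_{\prod_y (j^*F)^{I_{\widetilde{g(y)}}}}\prod_{y\in M}F_\infty(g(y))$) is the right one, and your $\Psi$ is correct. The genuine gap is in $\Phi$: setting $F_\infty(x):=\mathcal{F}(X,\{x\})$ is wrong. The infinite-prime component of the triple is the \emph{stalk} of $\mathcal{F}$ at $x$, i.e.\ the colimit $\varinjlim_{(Y,y)}\mathcal{F}(Y,\{y\})$ over Artin--Verdier étale neighborhoods $(Y,y)$ of the chosen extension $\tilde x$, not the sections over the single object $(X,\{x\})$. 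You can see the failure already on $\phi_*F$: by Proposition \ref{prop:BienenMorph} its triple is $(\{(j^*F)^{I_{\tilde x}}\},F,\id)$, whereas $\phi_*F(X,\{x\})=F(X)$, and $F(X)\to (j^*F)^{I_{\tilde x}}$ is not an isomorphism in general. Equivalently, your own formula gives $\Psi(F_\infty,F,\sigma)(X,\{x\})=F(X)\times_{(j^*F)^{I_{\tilde x}}}F_\infty(x)$, which is not $F_\infty(x)$, so the claim that ``$\Phi\circ\Psi\simeq\id$ is essentially by inspection'' fails precisely in the slot that carries the new information. With your $\Phi$ the functor is not even essentially surjective onto $\mathcal{S}_X$: realizing the triple $(\{(j^*F)^{I_{\tilde x}}\},F,\id)$ would require a splitting of $F(X)\to(j^*F)^{I_{\tilde x}}$.

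The fix is exactly the colimit you already invoke when constructing $\sigma_x$, applied one step earlier: define $F_\infty(x):=\varinjlim_{(Y,y)}\mathcal{F}(Y,\{y\})$ and let $\sigma_x$ be the colimit of the restriction maps $\mathcal{F}(Y,\{y\})\to\mathcal{F}(Y,\emptyset)=F(Y)$, landing in $\varinjlim F(Y)=(j^*F)^{I_{\tilde x}}$ (here the unramifiedness condition on morphisms in $\Xcet$ does, as you say, cut the index category down to neighborhoods computing the $I_{\tilde x}$-invariants). Then $\Phi\circ\Psi\simeq\id$ follows because filtered colimits commute with the finite limit defining $\Psi$, so $\varinjlim_{(Y,y)}\bigl(F(Y)\times_{(j^*F)^{I_{\tilde x}}}F_\infty(x)\bigr)=F_\infty(x)$; and $\Psi\circ\Phi\simeq\id$ requires the cover-and-glue argument you sketch, but run over arbitrarily fine neighborhoods $(Y_i,\{y_i\})\to(Y,M)$ rather than the single cover $\{(Y,\emptyset)\}\cup\{(Y,\{y\})\}_{y\in M}$, again so that the stalks rather than the sections $\mathcal{F}(Y,\{y\})$ appear. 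With that correction the proof goes through and is the same gluing argument the cited reference uses.
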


Proposition \ref{prop:BienenEquiv} is the archimedean analogue of \'etale recollement, i.e., how one reconstructs $\Sh(Y)$ from $\Sh(Z)$ and $\Sh(U)$ with gluing data, where $Z$ is a closed subscheme of $Y$ with complement $U$ (see \cite[Thm. II.3.10]{MilneEtale}). 

We will from now on often identify the category $\Sh(\Xcet)$ with $\mathcal{S}_X$. The following proposition shows how $\Sh(\Xet)$, $\Sh(\Xcet)$ and $\Sh(X_\infty)$ relate to each other.

\begin{prop}[{\cite[Prop. 1.1]{BienenfeldEtale}}] \label{prop:BienenMorph}
There are geometric morphisms 
  \[
    \phi_\ast \colon \Sh(\Xet) \rightleftarrows \Sh(\Xcet)\colon \phi^\ast\,,
  \]  
  \[
    \kappa_*  \colon \Sh(X_\infty) \rightleftarrows \Sh(\Xcet)  \colon \kappa^*\,.
  \]
Let $F \in \Sh(\Xet)$, $S = (G_\infty,G,\{\sigma_x\}_{x \in X_\infty}) \in \Sh(\Xcet)$, and $K \in \Sh(X_\infty)$. Then
  \[
    \begin{split}
      \phi_\ast F & = (\{(j^*F)^{I_{\widetilde{x_i}}}\},F,\id),\\
      \phi^\ast (S) & = G\,,\\
      \kappa_* K & = (K,0,0)\,, \\
      \kappa^* S & = G_\infty\,.
    \end{split}
  \]
Further, $\phi^\ast$ has a left adjoint, denoted by $\phi_!$, while $\kappa_\ast$ has a right adjoint $\kappa^!$. These satisfy the following formulas:
  \[
    \begin{split}
      \phi_! (F) & = (0,F,0)\,, \\
      \kappa^!(S) & = \{\ker \sigma_x\}_{x \in X_\infty}\,.
    \end{split}
  \]
\end{prop}

\begin{rk}
The forgetful functor $\Xcet \rightarrow \Xet$ is a morphism of sites, and thus gives rise to a geometric morphism 
  \[
    \tilde{\phi}_*\colon \Sh(\Xet) \rightleftarrows \Sh(\Xcet)  \colon \tilde{\phi}^*\,.
  \] 
Under the identification of $\Sh(\Xcet)$ with a category of triples as in Proposition \ref{prop:BienenEquiv},  $\tilde{\phi}_*$ and $\tilde{\phi}^*$ are identified with the functors $\phi_*$ and $\phi^*$ respectively.
\end{rk}

Note that if $\underline{A} \in \Sh(\Xet)$ is the constant sheaf on $\Xet$ with value $A$, then $\phi_\ast(\underline{A})$ is isomorphic to the constant sheaf on $\Sh(\Xcet)$ with value $A$. 

If $L/K$ is an extension of number fields, let $Y = \Spec \OO_L$ and $X = \Spec \OO_K$. For each infinite prime $y \in Y_\infty$ lying over $x \in X_\infty$, choose the decomposition group $I_{\tilde{y}}$ such that $I_{\tilde{y}} \subset I_{\tilde{x}}$. We have a natural map $\pi\colon Y \rightarrow X$ and we will now define a push-forward functor $\pi_\ast\colon \Sh(\Ycet) \rightarrow \Sh(\Xcet)$ and a pull-back functor 
  \[
    \pi^\ast\colon \Sh(\Xcet) \rightarrow \Sh(\Ycet)\,.
  \] 
This will be done by identifying $\Sh(\Ycet)$ and $\Sh(\Xcet)$ with categories of triples as in Definition \ref{def:triples}. Note that given a map $\pi$ as above, we have geometric morphisms
  \[
    \pi_\ast \colon \Sh(Y_\infty) \rightleftarrows \Sh(X_\infty)  \colon \pi^\ast
  \] 
and 
  \[
    \pi_\ast  \colon \Sh(\Yet) \rightleftarrows \Sh(\Xet)\colon \pi^\ast\,,
  \] 
and we want to combine these to get a geometric morphism 
  \[
    \pi_\ast \colon \Sh(\Ycet) \rightleftarrows \Sh(\Xcet) \colon \pi^\ast\,.
  \]

Given an extension $L/K$ as above we have the following commutative diagram 
  \[
    \begin{tikzcd}   
      \Spec L \arrow[d,"\pi"] \arrow[r,"i"] & Y \arrow[d,"\pi"]\\  
      \Spec K \arrow[r,"j"] & X\,. 
    \end{tikzcd}
  \] 
If $(F_\infty,F,\{\sigma_y \}_{y \in Y_\infty}) \in \Sh(\Ycet)$, we see that to construct $\pi_\ast\colon \Sh(\Ycet) \rightarrow \Sh(\Xcet)$, we must, for each $x \in X_\infty$,  give a natural map 
  \[
    \sigma'_x\colon \pi_\ast(F_\infty)(x) = \bigoplus_{y/x} F_\infty(y) \rightarrow (j^*\pi_*F)^{I_{\tilde{x}}}
  \] 
and this is what is done in the discussion that follows. Since $\pi$ is finite \'etale, by looking at the stalk at a separable closure, we see that the natural map $j^*\pi_* F\to\pi_* i^* F$ is an isomorphism. For $x \in X_\infty$, 
  \[
    (\pi_*i^* F)^{I_{\tilde{x}}} \cong  (i^* F)(\Spec \bar{K}^{I_{\tilde{x}}} \otimes_K L)\,,
  \]
and since $\Spec \bar{K}^{I_{\tilde{x}}}\otimes_K L =  \amalg_{y /x} \Spec \bar{K}^{I_{\tilde{y}}}$, where $y$ ranges over the primes lying over $x$, we see that 
  \[
    (\pi_*i^*F)^{I_{\tilde{x}}} \cong \bigoplus _{y /x} (i^*F)^{I_{\tilde{y}}}\,.
  \] 
If $x \in X_\infty$, we let 
  $
    \theta_*  \colon \bigoplus_{y/x} (i^*F)^{I_{\tilde{y}}} \rightarrow (j^*\pi_*F)^{I_{\tilde{x}}}
  $ 
be the isomorphism that is the composite of the isomorphism 
  \[
    \bigoplus_{y/x} (i^*F)^{I_{\tilde{y}}} \rightarrow (\pi_*i^*F)^{I_{\tilde{x}}}
  \]
followed by the natural isomorphism $(\pi_*i^*F)^{I_{\tilde{x}}} \rightarrow (j^*\pi_*F)^{I_{\tilde{x}}}$. We now construct $\sigma'_x\colon \pi_\ast(F_\infty)(x) = \bigoplus_{y/x} F_\infty(y) \rightarrow (j^*\pi_*F)^{I_{\tilde{x}}}$ as the composite of 
  \[
    \bigoplus_{y/x} F_\infty(y) \xrightarrow{\oplus_{y/x} \sigma_y} \bigoplus_{y/x} (i^*F)^{I_{\tilde{y}}}
  \] 
with $\theta_*$. This allows us to construct the claimed push-forward, which we record in the following definition.

\begin{df} \label{def:push-forward}
Let $L/K$ be an extension of number fields, $X = \Spec \OO_K$, $Y = \Spec \OO_L$, and let $\pi \colon Y \rightarrow X$ be the natural projection. Further, let $j \colon \Spec K \rightarrow X$ and $i \colon \Spec L \rightarrow Y$ be the maps that are induced by inclusion. Denote by $X_\infty$ and $Y_\infty$ the infinite places of $X$ and $Y$ respectively. Then the push-forward functor
  \[
    \pi_\ast \colon \Sh(\Ycet) \rightarrow \Sh(\Xcet)
  \] 
takes $(F_\infty,F,\{\sigma_y\}_{y \in Y_\infty})$ to $(\pi_\ast F_\infty , \pi_\ast F, \{\sigma'_x\}_{x \in X_\infty})$ where $\pi_\ast F$ is the push-forward in $\Sh(\Xet)$, $\pi_\ast F_\infty(x) = \bigoplus_{y /x} F_\infty(y)$, and
  \[
    \sigma'_x \colon \pi_\ast F_\infty(x) = \bigoplus_{y/x} F_\infty(y) \rightarrow (j^* \pi_\ast F)^{I_{\tilde{x}}}
  \] 
is the map $\bigoplus_{y/x} F_\infty(y) \xrightarrow{\oplus_{y/x} \sigma_y} \bigoplus_{y/x} (i^*F)^{I_{\tilde{y}}}$ followed by the map $\theta_*$ just defined.
\end{df}

Having defined the push-forward functor $\pi_\ast\colon \Sh(\Ycet) \rightarrow \Sh(\Xcet)$ we now define the pull-back functor. Just as above, we want to combine the two pull-back functors $\pi^*\colon \Sh(\Xet) \rightarrow \Sh(\Yet)$ and $\pi^*\colon \Sh(X_\infty) \rightarrow \Sh(Y_\infty)$ to a functor 
  \[
    \pi^*\colon \Sh(\Xcet) \rightarrow \Sh(\Ycet)\,,
  \] 
To do this, we follow the strategy of Bienenfeld \cite{BienenfeldEtale}. The map $\theta_*$ above gives a natural isomorphism $\theta_*\colon  \pi_* \tau_Y \cong \tau_X \pi_*$, where $\tau_X\colon \Sh(\Xet)\to \Sh(X_\infty)$ is the functor which takes $F$ to 
  $
    \{(j^*F)^{I_{\tilde{x}}}\colon  x\in X_\infty\}
  $
and $\tau_Y\colon \Sh(\Yet)\to \Sh(Y_\infty)$ is the functor which takes $G$ to 
  $
    \{(i^*G)^{I_{\tilde{y}}}\colon y\in Y_\infty\}\,.
  $ 
If $F \in \Sh(\Xet)$, since $\pi_*$ is right adjoint to $\pi^*$, we have a natural unit morphism 
  $
    \eta_F\colon  F \rightarrow \pi_*\pi^*F\,.
  $
We now let 
  \[
    \theta^*\colon \pi^* \tau_X F \rightarrow \tau_Y \pi^* F
  \] 
be the map adjoint to the composite 
  \[
    \tau_X F \xrightarrow{\eta_F} \tau_X \pi_* \pi^* F \xrightarrow{\theta_*^{-1}} \pi_* \tau_Y \pi^* F\,.
  \]
It is clear that this gives a natural transformation $\theta^* \colon \pi^* \tau_X \Rightarrow \tau_Y \pi^*$. Given 
  \[
    F = (F_\infty,F, \{\sigma_x\}_{x \in X_\infty}) \in \Sh(\Xcet)\,,
  \] 
we define $\sigma'$ as the composite
  \[
    \sigma' \colon \pi^*(F_\infty) \xrightarrow{\pi^* (\Pi_x \sigma_x)} \pi^*(\tau_X F) \xrightarrow{\theta^*} \tau_Y( \pi^* F)\,.
  \]  
Evaluating $\sigma'$ at a point $y \in Y_\infty$, we get a map 
  \[
    \sigma'_y \colon \pi^*(F_\infty)(y) \rightarrow (i^* \pi^*F)^{I_{\bar{y}}}\,.
  \]
This allows us to define the pull-back, which we record in the following definition.

\begin{df} \label{def:pull-back}
Let $L/K$ be an extension of number fields, $X = \Spec \OO_K,Y= \Spec \OO_L$, and let $\pi\colon Y \rightarrow X$ be the natural projection and $j \colon \Spec K \rightarrow X, i \colon \Spec L \rightarrow Y$ be the maps induced by inclusion. Then the pull-back functor 
  \[
    \pi^\ast  \colon \Sh(\Xcet) \rightarrow \Sh(\Ycet)
  \] 
takes
$(F_\infty,F,\{\sigma_x\}_{x \in X_\infty})$ to 
  \[
    (\pi^\ast (F_\infty),\pi^\ast(F) ,\{\sigma'_y\}_{y \in Y_\infty})
  \] 
where $\sigma'_y \colon (\pi^\ast F_\infty)(y) \rightarrow (i^* \pi^* F)^{I_{\tilde{y}}}$ is defined as above.
\end{df}

\begin{prop}\label{prop:adjoint}
Let $L/K$ be a finite extension of number fields and let $\pi \colon Y = \Spec \OO_L \rightarrow \Spec \OO_K = X$ be the natural projection. Then the functor 
  \[
    \pi_*\colon \Sh(\Ycet) \rightarrow \Sh(\Xcet)
  \] 
is right adjoint to 
  \[
    \pi^*\colon \Sh(\Xcet) \rightarrow \Sh(\Ycet)\,.
  \] 
If further $L/K$ is unramified (at all places, including the infinite ones), then $\pi_*$ is left adjoint to $\pi^*$ as well.
\end{prop}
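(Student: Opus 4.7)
The plan is to work throughout with the equivalence of $\Sh(\Xcet)$ and $\Sh(\Ycet)$ with the categories of triples $\mathcal{S}_X, \mathcal{S}_Y$ from Proposition \ref{prop:BienenEquiv}, so that morphisms are pairs of component morphisms satisfying a $\sigma$-compatibility. The proof then reduces to combining the two ``factor adjunctions'' $\pi^* \dashv \pi_*$ on the étale sites $\Sh(\Xet), \Sh(\Yet)$ and on the infinite-place sites $\Sh(X_\infty), \Sh(Y_\infty)$, together with the mate compatibility between the natural transformations $\theta_*$ and $\theta^*$ constructed above Definitions \ref{def:push-forward} and \ref{def:pull-back}.

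For the right-adjoint statement, I would unpack a morphism $\pi^* F \to G$ in $\mathcal{S}_Y$ as a pair of component maps $(g_\infty, g_{\text{\'et}})$ satisfying the compatibility defined via $\theta^*$. Applying the two factor adjunctions transports this pair to maps $F_\infty \to \pi_* G_\infty$ and $F \to \pi_* G$, and one verifies that the induced compatibility matches the one from Definition \ref{def:push-forward}. This verification is essentially tautologous: $\theta^*$ was defined precisely so that it is the mate of $\theta_*$ under the factor adjunctions (it was constructed as the adjoint of $\eta_F$ followed by $\theta_*^{-1}$). Naturality in $F$ and $G$ is then inherited from the naturality of the factor adjunctions.

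For the left-adjoint statement, assume $L/K$ is unramified at all places. Two additional inputs are needed. First, the finite étale morphism $\pi\colon Y \to X$ has $\pi_* \cong \pi_!$ on the étale sites (a standard fact for finite étale maps), so $\pi^*\colon \Sh(\Xet) \to \Sh(\Yet)$ acquires $\pi_*$ also as a left adjoint. Second, on $\Sh(X_\infty)$ the pushforward $\pi_* F_\infty(x) = \oplus_{y/x} F_\infty(y)$ is both a finite direct sum and a finite direct product of abelian groups, so $\pi_*$ is automatically a left adjoint to $\pi^*$ there as well; unramifiedness at the infinite places further gives $I_{\tilde{y}} = I_{\tilde{x}}$ whenever $y$ lies over $x$, which makes $\theta_*$ a componentwise identification compatible with both adjunction directions on the étale factor. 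Mirroring the previous argument — unpacking a morphism $F \to \pi_* G$ in $\mathcal{S}_X$ as a pair, applying the left adjoints on the two factor sites, and checking that the output compatibility is exactly the one defining a morphism $\pi^* F \to G$ in $\mathcal{S}_Y$ — then yields the desired bijection.

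The main obstacle is the bookkeeping of the mate correspondences: checking carefully that $\theta_*$ and $\theta^*$ really are mates of each other under both the right-adjunction and (in the unramified case) the left-adjunction data on the two factor categories. Once this mate compatibility is established, both adjunctions follow formally from the factor adjunctions by reading off the $\sigma$-compatibilities componentwise.
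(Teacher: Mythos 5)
Your argument is correct, but it is not the route the paper takes: the paper simply cites Bienenfeld (Lemma~1.9 and Proposition~1.11 of \cite{BienenfeldEtale}) for the first part and offers ``calculate it directly'' as an unexecuted alternative for the second, whereas you actually carry out that direct calculation. What you are doing is the standard Artin-gluing argument: $\mathcal{S}_X$ is the comma category of pairs $(F_\infty, F)$ with a map $F_\infty \to \tau_X F$, and an adjunction on such a gluing is assembled from adjunctions on the two factors together with a mate-compatible square relating $\tau_X$ and $\tau_Y$. Your identification of the needed inputs is right on all counts: for the right adjunction, $\theta^*$ was \emph{defined} in the paper as the mate of $\theta_*^{-1}$ (adjoint of $\tau_X F \xrightarrow{\eta_F} \tau_X\pi_*\pi^*F \xrightarrow{\theta_*^{-1}} \pi_*\tau_Y\pi^*F$), so the compatibility transport is indeed close to tautological; for the left adjunction you correctly isolate the two extra facts needed, namely $\pi_! \cong \pi_*$ for finite étale morphisms on the étale factor and the finite biproduct $\oplus_{y/x} = \prod_{y/x}$ on the $X_\infty$-factor. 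The one point you rightly flag as the ``main obstacle'' is genuinely the crux of the second part: the mate of $\theta_*$ computed with the unit/counit of the \emph{left} adjunction data is a priori a different transformation from $\theta^*$, and the argument only closes because in the unramified case $I_{\tilde y} = I_{\tilde x}$ makes $\theta_*$ and $\theta^*$ componentwise identifications, so both mate conditions hold simultaneously. Your self-contained approach buys independence from the precise statements in \cite{BienenfeldEtale} at the cost of that bookkeeping; the paper's citation buys brevity.
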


\begin{proof}
The first part follows from \cite[Lemma 1.9 and Proposition 1.11]{BienenfeldEtale}. To prove the second part, one can either use \cite[Proposition 1.11]{BienenfeldEtale} or calculate it directly, which we leave to the reader.
\end{proof}

\begin{rk} \label{rk:notconnected}
Suppose that we have an object $\pi \colon (Y,Y_\infty) \rightarrow (X,X_\infty)$ of $\Xcet$, and suppose further that $(Y,Y_\infty) \cong \amalg_{i=1}^n (Y_i,(Y_i)_\infty)$ where $Y_i = \Spec \OO_{L_i}$ for $L_i/K$ a finite extension of $K$. Let us set 
  \[
    \Sh(\Ycet) := \Pi_{i=1}^n \Sh(\Yicet)\,. 
  \]
One easily sees that this is the category of sheaves on a natural Artin--Verdier site associated to $(Y,Y_\infty)$. It is clear that if we let $\pi_i \colon Y_i \rightarrow X$ be the restriction of $\pi$ to $Y_i$, that we can, using the adjunctions 
  \[
    (\pi_i)^*\colon  \Sh(\Xcet) \rightleftarrows \Sh(\Yicet) \colon (\pi_i)_*
  \]  
define an adjunction 
  \[
    \pi^* \colon \Sh(\Xcet) \rightarrow \Sh(\Ycet) \colon \pi_*\,.
  \] 
The functor $\pi_*$ will then be the pushforward and $\pi^*$ the pullback. It also clear, for formal reasons, that if for each $i$, $(\pi_i)_*$ is also left adjoint to $(\pi_i)^*$, then the functor $\pi_*$ is left adjoint to $\pi^*$. We will sometimes need to use $\pi_*$ and $\pi^*$ when $Y$ is not connected in Section \ref{sec:cup}.
\end{rk}

Since $\Sh(\Xcet)$ is the category of sheaves on a site, it is a Grothendieck topos, so we have cohomology functors $H^i(\tilde{X},-) \colon \Sh(\Xcet) \rightarrow \Ab$, defined as the derived functors of the global sections functor. With notation as in Proposition \ref{prop:BienenMorph}, if $F \in \Sh(\Xcet)$, we want to relate $H^i(\tilde{X},F)$ with $H^i(X,\phi^* F)$. For any sheaf $F$ on $\Xet$ we have a short exact sequence 
  \[
    0 \rightarrow \phi_! F \rightarrow \phi_* F \rightarrow \kappa_*\kappa^* \phi_* F \rightarrow 0
  \] 
in $\Sh(\Xcet)$. If we take $F$ to be equal to $\ZZ$, and let $S \in \Sh(\Xcet)$, we get by applying $\Ext^i_{\tilde{X}}(-,S)$ to this short exact sequence a long exact sequence
  \[
    \cdots \rightarrow R^i\kappa^!(S) \rightarrow H^i(\tilde{X},S) \rightarrow H^i(X,\phi^*S) \rightarrow R^{i+1}\kappa^!(S) \rightarrow \cdots
  \]
which is the local cohomology sequence for $X_\infty$ (see \cite[Proposition 1.4]{BienenfeldEtale}).

\begin{lem} \label{lem:localcoh}
If $F \in \Sh(\Xet)$ is such that for each $x \in X_\infty$, $j^*F$ is a cohomologically trivial $I_{\tilde{x}}$-module, then $H^i(X,F) \cong H^i(\tilde{X},\phi_* F)$ for all $i\geq 0$.
\end{lem}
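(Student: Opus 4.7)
The plan is to deduce the isomorphism from the Leray spectral sequence
\[
E_2^{p,q} = H^p(\tilde X, R^q\phi_* F) \Longrightarrow H^{p+q}(X, F)
\]
attached to the geometric morphism $\phi$ of Proposition \ref{prop:BienenMorph}. The lemma follows once we show that $R^q\phi_* F = 0$ for all $q>0$, for then the spectral sequence degenerates onto the row $q=0$ and the edge map yields the desired isomorphism.

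To check this vanishing I would work in the triple description of $\Sh(\Xcet)$ and use that the exact functors $\phi^*$ and $\kappa^*$ together detect when an object of $\Sh(\Xcet)$ is zero: for a triple $S=(G_\infty,G,\{\sigma_x\})$ we have $\phi^*S=G$ and $\kappa^*S=G_\infty$, so $S=0$ iff both $\phi^*S$ and $\kappa^*S$ are zero. It thus suffices to prove that $\phi^*R^q\phi_* F = 0$ and $\kappa^*R^q\phi_* F = 0$ for $q>0$.

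For the middle component, exactness of $\phi^*$ lets us commute it past the derived functors, giving $\phi^*R^q\phi_* F = R^q(\phi^*\phi_*)F$, and the explicit formulas in Proposition \ref{prop:BienenMorph} show that $\phi^*\phi_* = \id_{\Sh(\Xet)}$, so this vanishes for $q>0$. For the component at infinity, exactness of $\kappa^*$ similarly gives $\kappa^*R^q\phi_* F = R^q(\kappa^*\phi_*)F$; from the formulas we read off $\kappa^*\phi_* F = \{(j^*F)^{I_{\tilde x}}\}_{x\in X_\infty}$, which at each $x$ factors as the exact pullback $j^*$ followed by taking $I_{\tilde x}$-invariants of the resulting $\Gal(\bar K/K)$-module. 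A Grothendieck composition-of-functors argument, using the exactness of $j^*$ to kill the cross terms, then identifies the right-derived functors with $\{H^q(I_{\tilde x}, j^*F)\}_{x\in X_\infty}$, which are zero for $q>0$ precisely by the cohomological triviality hypothesis.

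The main technical step requiring care is the identification $R^q(\kappa^*\phi_*)F \cong \{H^q(I_{\tilde x}, j^*F)\}_{x\in X_\infty}$, i.e.\ checking that the composition spectral sequence degenerates as claimed; once this is granted, the rest of the argument is a formal consequence of the adjunctions and triple description of $\Sh(\Xcet)$ set up in Propositions \ref{prop:BienenEquiv} and \ref{prop:BienenMorph}.
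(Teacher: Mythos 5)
Your argument is correct, but it takes a genuinely different route from the paper's. The paper does not use the Leray spectral sequence for $\phi$; instead it applies $\Ext^\bullet_{\tilde X}(-,\phi_*F)$ to the sequence $0\to\phi_!\ZZ\to\phi_*\ZZ\to\kappa_*\kappa^*\phi_*\ZZ\to 0$ to get the local cohomology long exact sequence $\cdots\to H^i(X_\infty,\kappa^!\phi_*F)\to H^i(\tilde X,\phi_*F)\to H^i(X,F)\to\cdots$, and then quotes Bienenfeld's computation of $H^i(X_\infty,\kappa^!S)$ (namely $\oplus_x\ker\sigma_x$, $\oplus_x\coker\sigma_x$, and $\oplus_x H^{i-1}(I_{\tilde x},G)$ in degrees $0$, $1$, and $\geq 2$): since the $\sigma_x$ for $\phi_*F$ are isomorphisms and $j^*F$ is cohomologically trivial, all these groups vanish and the long exact sequence collapses. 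Your version instead kills the higher direct images $R^q\phi_*F$ and invokes Leray; the componentwise check via the exact functors $\phi^*$ and $\kappa^*$ in the triple description is sound ($\phi^*\phi_*=\id$ disposes of the middle component, and a triple with both components zero is zero). The one step you rightly flag -- identifying $R^q(\kappa^*\phi_*)F$ with $\{H^q(I_{\tilde x},j^*F)\}_x$ -- needs $j^*$ to preserve injectives (it does, having the exact left adjoint $j_!$) and restriction of discrete injectives to the closed subgroup $I_{\tilde x}$ to yield acyclics (standard for profinite groups); this identification is essentially the same computation that underlies the Bienenfeld lemma the paper cites, so the two proofs consume the identical input $H^q(I_{\tilde x},j^*F)=0$ for $q\geq 1$, just packaged differently. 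What your route buys is independence from the local cohomology sequence and from the explicit form of $\kappa^!$; what the paper's route buys is that everything is reduced to a single quoted result.
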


\begin{proof}
This follows from the local cohomology sequence for $X_\infty$ and \cite[Lemma 3.7]{BienenfeldEtale}. Indeed, in the latter lemma, it is shown that if $S = (G_\infty,G,\{\sigma_x\}_{x \in X_\infty}) \in \Sh(\Xcet)$, then 
  \[
    R^i\kappa^!(S) =  
    \begin{cases} 
      \bigoplus_{x \in X_\infty} \ker \sigma_x & \mbox{ if } i= 0 \\
      \bigoplus_{x \in X_\infty} \coker \sigma_x  & \mbox{ if } i= 1 \\
      \bigoplus_{x \in X_\infty} H^{i-1}(I_{\tilde{x}},j^*_x G) & \mbox{ if } i \geq 2\,. 
    \end{cases}
  \]
But if $F \in \Sh(\Xet)$, then since $\phi_*F$ is of the form $(F_\infty, F, {\sigma_x})$, where every $\sigma_x$ is the identity, it follows that $H^i(X_\infty,\kappa^! \phi_* F) = 0$ for $i=0,1$. The assumption that $j^*F$ is cohomologically trivial shows that $H^i(X_\infty,\kappa^! \phi_* F)=0$ for $i\geq 1$ and by the local cohomology sequence we are done.
\end{proof}

The following proposition shows that the cohomology ring $H^*(\tilde{X},\ZZ/n\ZZ)$ will always be isomorphic to the cohomology ring $H^*(X,\ZZ/n\ZZ)$ unless $n$ is even and $X$ has real places.

\begin{prop}\label{prop:cohagrees}
Let $X = \Spec \OO_K$ be the ring of integers of a number field. Then the cohomology rings $H^*(\tilde{X},\ZZ/n\ZZ)$ and $H^*(X,\ZZ/n\ZZ)$ are isomorphic if either $K$ is totally imaginary or if $n$ is odd.
\end{prop}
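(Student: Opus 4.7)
The plan is to apply Lemma \ref{lem:localcoh} to $F = \underline{\ZZ/n\ZZ}$, the constant sheaf on $\Xet$. By the remark immediately following Proposition \ref{prop:BienenMorph}, $\phi_\ast \underline{\ZZ/n\ZZ}$ is the constant sheaf on $\Xcet$ with value $\ZZ/n\ZZ$, so $H^*(\tilde X,\ZZ/n\ZZ) = H^*(\tilde X,\phi_\ast \underline{\ZZ/n\ZZ})$. It therefore suffices to verify the cohomological-triviality hypothesis of the lemma and to check that the resulting comparison map respects the cup product.

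For the first point, note that $j^*\underline{\ZZ/n\ZZ}$ is $\ZZ/n\ZZ$ with trivial Galois action. If $K$ is totally imaginary, every $I_{\tilde x}$ is trivial and there is nothing to prove. If $n$ is odd, then at a complex place $I_{\tilde x}$ is again trivial, while at a real place $I_{\tilde x} \cong \ZZ/2\ZZ$; since multiplication by $|I_{\tilde x}| = 2$ annihilates $H^i(I_{\tilde x},M)$ for $i > 0$ but acts invertibly on $\ZZ/n\ZZ$ when $n$ is odd, we conclude $H^i(I_{\tilde x},\ZZ/n\ZZ) = 0$ for $i>0$. Thus $j^*\underline{\ZZ/n\ZZ}$ is cohomologically trivial in both cases, so Lemma \ref{lem:localcoh} supplies an isomorphism $H^i(X,\ZZ/n\ZZ) \cong H^i(\tilde X,\ZZ/n\ZZ)$ of abelian groups.

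The main obstacle is verifying that this isomorphism is multiplicative, since Lemma \ref{lem:localcoh} is only stated additively. For this I would trace through its proof: the isomorphism is induced by the natural map
\[
H^i(\tilde X,\phi_\ast\underline{\ZZ/n\ZZ}) \longrightarrow H^i(X, \phi^\ast\phi_\ast\underline{\ZZ/n\ZZ}) \cong H^i(X,\ZZ/n\ZZ)
\]
arising from the geometric morphism $\phi\colon \Sh(\Xet) \to \Sh(\Xcet)$ and the identity $\phi^*\phi_*\underline{\ZZ/n\ZZ} \cong \underline{\ZZ/n\ZZ}$. Because $\phi^\ast$ is a left adjoint that commutes with the tensor product of sheaves of $\ZZ/n\ZZ$-modules and sends the constant sheaf to the constant sheaf, the pullback on cohomology intertwines the cup products on $\Xcet$ and $\Xet$. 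Hence the additive isomorphism from the lemma is automatically a ring isomorphism, which completes the proof.
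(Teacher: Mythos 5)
Your proof is correct and follows the same route as the paper: apply Lemma \ref{lem:localcoh} to the constant sheaf, observing that the decomposition groups at complex places are trivial and that $\ZZ/n\ZZ$ is a cohomologically trivial $\ZZ/2\ZZ$-module when $n$ is odd. Your additional paragraph checking that the comparison isomorphism is multiplicative addresses a point the paper leaves implicit, and your justification (the map is pullback on cohomology along the geometric morphism $\phi$, hence compatible with cup products) is sound.
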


\begin{proof}
We apply Lemma \ref{lem:localcoh}. For each complex place $x$ of $K$, $I_{\tilde{x}}$ is trivial so that $j^*F$ is cohomologically trivial. If we consider a real place $x$, then $I_{\tilde{x}} \cong \ZZ/2\ZZ$, so we must show that $j^*(\ZZ/n\ZZ) \cong \ZZ/n\ZZ$ (with trivial Galois action) is cohomologically trivial as a $\ZZ/2\ZZ$-module if $n$ is odd, but this is obvious.
\end{proof}



\subsection{Artin--Verdier duality for general number fields} \label{subsec:artin}
We now move on to stating the duality result which will be needed for our later computation of the cup product. The notation in this subsection is the same as in Section \ref{subsec:artinsite}. We will denote by $^\sim$ the functor 
  \[
    \RHom_{\Ab}(-,\mathbb{Q}/\ZZ) \colon \Drv(\Ab)^{op} \rightarrow \Drv(\Ab)\,.
  \] 
Let $F \in \Drv(\Xcet)$ and denote by $\mathbb{G}_{m,X}$ the sheaf of units on $X$. Then the morphism
  \[
    A\colon \RGamma(\tilde{X},F) \rightarrow \RHom_{\tilde{X}}(F,\phi_\ast \mathbb{G}_{m,X})[3]^\sim\,,
  \] 
is defined to be the adjoint to the composition map
  \[
    \RHom_{\tilde{X}}(\ZZ, F) \times \RHom_{\tilde{X}}(F, \phi_\ast \mathbb{G}_{m,X}) \rightarrow \RHom_{\tilde{X}}(\ZZ , \phi_\ast \mathbb{G}_{m,X})
  \]
followed by the trace map $\RHom_{\tilde{X}}(\ZZ,\phi_\ast \mathbb{G}_{m,X}) \rightarrow \mathbb{Q}/\ZZ[-3]$ (see \cite[Prop. 2.7]{BienenfeldEtale}). The following is the version of Artin--Verdier duality that we need.

\begin{thm}[{\cite[Thm\ 5.1]{BienenfeldEtale}}] \label{thm:AVduality}
Let $F$ be a constructible sheaf on $\Xet.$ Assume that for each $x_i \in X_\infty$, $I_{\widetilde{x_i}}$ acts trivially on $j^* F$.  Then the map
  \[
    A \colon \RGamma(\tilde{X},\phi_\ast F) \rightarrow \RHom_{\tilde{X}}(\phi_\ast F,\phi_\ast \mathbb{G}_{m,X})[3]^\sim
  \] 
is an isomorphism in $\Drv(\Ab)$.
\end{thm}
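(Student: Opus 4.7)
The plan is to prove the duality by dévissage to building-block sheaves, then combine classical Artin--Verdier duality on $\Xet$ with the local cohomology sequence at the archimedean places. Both sides of the map $A$ are exact functors of $F$: the source because $\RGamma(\tilde{X},\phi_*-)$ sends short exact sequences to distinguished triangles, and the target by adjunction. So by a five-lemma argument it suffices to verify the isomorphism on a class of sheaves generating the category of constructible sheaves on $\Xet$ under extensions. By a standard result of Artin, every such $F$ sits in short exact sequences built from (a) skyscrapers $i_{p,*}G$ at closed points $p$ of $X$, and (b) sheaves of the form $j_!\underline{A}$ for $j\colon U\hookrightarrow X$ a dense open immersion and $\underline{A}$ a locally constant constructible sheaf on $U$. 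The trivial-action hypothesis on $j^*F$ is preserved throughout these reductions.

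For a skyscraper $i_{p,*}G$ at a finite prime $p$, the pushforward $\phi_*i_{p,*}G$ has trivial stalks at $X_\infty$, so the local cohomology sequence collapses and Lemma \ref{lem:localcoh} reduces both sides to a statement on $\Xet$. What remains is classical local Tate duality at the henselian local ring $\OO_{X,p}^{\h}$, giving the isomorphism in this case. For the $j_!\underline{A}$ case one reduces by dévissage along finite étale covers of $U$ further to constant coefficients, and the heart of the argument becomes the comparison of the ordinary étale Artin--Verdier map with $A$.

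For this comparison, I would apply Lemma \ref{lem:localcoh} and the local cohomology sequence preceding it to obtain a long exact sequence relating $H^i(\tilde{X},\phi_*F)$ with $H^i(X,F)$ and the archimedean correction $\bigoplus_{x\in X_\infty}\hat{H}^{i-1}(I_{\tilde{x}},j^*F)$. A parallel sequence computes the right-hand side after applying $\RHom_{\tilde{X}}(-,\phi_*\mathbb{G}_{m,X})^\sim[3]$, using the short exact sequence $0\to\phi_!\to\phi_*\to\kappa_*\kappa^*\phi_*\to 0$ from Proposition \ref{prop:BienenMorph}. Classical Artin--Verdier on $\Xet$ gives an isomorphism modulo 2-torsion between the two inner terms, and what remains is to identify the outer correction terms via the pairing induced by the trace map. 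The hypothesis that $I_{\tilde{x}}$ acts trivially makes $\hat{H}^*(I_{\tilde{x}},j^*F)$ computable as a direct product of copies of $j^*F/2$ and $(j^*F)[2]$, which match precisely with the 2-primary discrepancy on the dual side coming from real-place Tate duality.

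The main obstacle will be the archimedean bookkeeping at real places: one must check that the pairing on $\hat{H}^*(I_{\tilde{x}},j^*F)$ inherited from the trace map to $\mathbb{Q}/\ZZ[-3]$ agrees with the standard Tate-duality pairing for the local field $\R$. This is where the trivial-action assumption is essential, since with $I_{\tilde{x}}\cong\ZZ/2\ZZ$ acting trivially the Tate cohomology is 2-periodic and two-torsion, and its self-pairing can be identified explicitly with the local invariant map at the real place, closing the five-lemma diagram and giving the desired isomorphism.
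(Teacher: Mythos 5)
The paper does not prove this statement at all: it is imported verbatim as \cite[Thm.~5.1]{BienenfeldEtale} and used as a black box, so there is no internal argument to compare yours against. Judged on its own, your sketch has a genuine gap at its foundation. The dévissage does not preserve the hypothesis that $I_{\widetilde{x_i}}$ acts trivially on $j^*F$, and that hypothesis is precisely what makes your five-lemma setup run. Concretely, to reduce a locally constant constructible sheaf on a dense open $U$ to constant coefficients you embed $F$ into $\pi_*\pi^*F$ for a finite cover $\pi$ trivializing $F$; but $j^*(\pi_*\pi^*F)$ is then an induced Galois module $\Ind_{G_L}^{G_K}(j^*F)$ on which the decomposition group at a real place acts nontrivially unless every place of $L$ above it is again real. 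Once you leave the trivial-action class, $\phi_*$ is no longer exact (a surjection $F\to G$ need not remain surjective after applying $(j^*(-))^{I_{\tilde{x}}}$; the obstruction is $H^1(I_{\tilde{x}},\ker)$), so neither $\RGamma(\tilde{X},\phi_*-)$ nor $\RHom_{\tilde{X}}(\phi_*-,\phi_*\mathbb{G}_{m,X})$ carries your short exact sequences to distinguished triangles, and the five lemma has nothing to act on. The statement one must actually prove by dévissage is duality for arbitrary constructible sheaves on $\Xcet$ itself, built from $\phi_!$ of sheaves on $\Xet$ and $\kappa_*$ of sheaves on $X_\infty$ via the sequence $0\to\phi_!F\to\phi_*F\to\kappa_*\kappa^*\phi_*F\to 0$; this is how Bienenfeld (and Zink) proceed, and the version for $\phi_*F$ with trivial action at infinity is then a corollary.

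A second, smaller gap: in your final step, knowing that the archimedean correction terms on the two sides are abstractly isomorphic (both being $2$-periodic, $2$-torsion Tate cohomology) does not close the five-lemma diagram. You must show that the specific maps induced by $A$ on those terms --- i.e.\ the pairings $\hat{H}^i(I_{\tilde{x}},j^*F)\times\hat{H}^{1-i}(I_{\tilde{x}},\Hom(j^*F,\bar{K}^*))\to\hat{H}^1(I_{\tilde{x}},\bar{K}^*)\cong\ZZ/2\ZZ$ induced by the trace --- are perfect, which is real local duality over $\R$ and not a counting argument. You correctly flag this as the crux, but as written it remains an assertion rather than a proof.
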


Note that the hypothesis of the theorem is satisfied if $F$ is a locally constant constructible sheaf on $X$ that is split by a morphism $p \colon Y = \Spec \OO_L \rightarrow \Spec \OO_K =X$, such that $L/K$ is unramified at all places, including the infinite ones. The proof of the following lemma is just as in \cite[Lemma 4.1]{CarlsonSchlankUnramified}.

\begin{lem} \label{lemma:morphismdual}
Let $X = \Spec \mathcal{O}_K$ be the ring of integers of a number field and let $f\colon F \rightarrow G$ be a morphism between bounded complexes of constructible sheaves on $\Xet$ such that for each $x_i \in X_\infty$, $I_{\widetilde{x_i}}$ acts trivially on each term in $j^*F$ and $j^* G$. Then, the map
  \[
    \RGamma(\tilde{X},\phi_\ast F) \xrightarrow{\RGamma(\tilde{X},\phi_*f)} \RGamma(\tilde{X},\phi_ \ast G)
  \]
corresponds under Artin--Verdier duality to the map 
  \[
    \RHom_{\tilde{X}}(\phi_\ast F,\phi_\ast \mathbb{G}_{m,X})[3]^\sim \xrightarrow{\RHom_{\tilde{X}}(\phi_*f,\phi_\ast\mathbb{G}_{m,X})[3]^\sim} \RHom_{\tilde{X}}(\phi_\ast G, \phi_\ast \mathbb{G}_{m,X})[3]^\sim\,.
  \]
\end{lem}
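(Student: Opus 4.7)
The result asserts naturality of the Artin--Verdier duality map $A$ of Theorem~\ref{thm:AVduality} with respect to the morphism $f$, so my plan is to extract this naturality directly from the definition of $A$.

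First I would reduce to the case of a morphism between sheaves. Since all the functors involved are triangulated and the statement is about a natural transformation between functors on the bounded derived category, the case of bounded complexes follows from the sheaf case by a standard totalization / d\'evissage argument (induction on the length of the complex using the cones of the truncations).

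For a sheaf $H$, recall that $A_H$ is defined, via the $\otimes^L\dashv\RHom$ adjunction on $\Drv(\Ab)$, as the adjoint of the pairing
$$P_H\colon \RGamma(\tilde{X},H)\otimes^L \RHom_{\tilde{X}}(H,\phi_\ast\mathbb{G}_{m,X}) \longrightarrow \mathbb{Q}/\ZZ[-3]$$
given by Yoneda composition followed by the trace map on $\RHom_{\tilde{X}}(\ZZ,\phi_\ast\mathbb{G}_{m,X}).$ Under this adjunction, the naturality square in the statement translates into the equality of the two composites
$$\RGamma(\tilde{X},\phi_\ast F)\otimes^L \RHom_{\tilde{X}}(\phi_\ast G,\phi_\ast\mathbb{G}_{m,X}) \;\rightrightarrows\; \mathbb{Q}/\ZZ[-3]$$
obtained from $\phi_\ast(f)_\ast\otimes\mathrm{id}$ followed by $P_{\phi_\ast G}$, and from $\mathrm{id}\otimes\phi_\ast(f)^\ast$ followed by $P_{\phi_\ast F}$, respectively. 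Both send $\alpha\otimes\beta$ to $\mathrm{tr}(\beta\circ\phi_\ast(f)\circ\alpha)$, so the equality is just associativity of Yoneda composition (together with the fact that the trace is a fixed map independent of $F,G$).

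The only real obstacle is formal: one must verify carefully that the $\otimes^L\dashv\RHom$ adjunction on $\Drv(\Ab)$ does indeed transform the claimed naturality square into the pairing rectangle above. This is a routine unwinding of the construction of $A$ from \cite[Prop.~2.7]{BienenfeldEtale}, and once carried out the lemma follows immediately from the associativity observation. (This is also exactly the strategy of the cited \cite[Lemma~4.1]{CarlsonSchlankUnramified}, which treats the $n=2$ case identically.)
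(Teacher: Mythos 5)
Your proposal is correct and is essentially the paper's own argument: the paper simply defers to \cite[Lemma 4.1]{CarlsonSchlankUnramified}, whose proof is exactly this formal naturality of the pairing defining $A$ (associativity of Yoneda composition plus the fact that the trace map is fixed). The only superfluous step is the d\'evissage to sheaves, since $A$ is defined for arbitrary objects of the derived category and the naturality square is formal at that level; the reduction is only relevant for knowing the horizontal maps are isomorphisms, which is Theorem \ref{thm:AVduality} and not what the lemma asserts.
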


We end this section by computing $H^i(\tilde{X},\ZZ/n\ZZ)$. By Theorem \ref{thm:AVduality}, and the fact that $\phi_\ast \ZZ/n\ZZ = \ZZ/n\ZZ$, this is the same as computing $\Ext^{3-i}_{\tilde{X}}(\ZZ/n\ZZ,\phi_\ast \mathbb{G}_{m,X}).$ For $i=0,1,3$ and $i >3$ this can be found in \cite[Prop. 2.13]{BienenfeldEtale}, but just as in \cite{MazurNotes}, $\Ext^1_{\tilde{X}}(\ZZ/n\ZZ,\phi_\ast \mathbb{G}_{m,X})$ is not explicitly determined. In the paper \cite{CarlsonSchlankUnramified}, the second author and Tomer Schlank gave a concrete interpretation when $X = \Spec \OO_K$ is the ring of integers of a totally imaginary number field, and we will now use the same method. The following presentation will be brief; for more details we refer to the aforementioned paper by the second author and Schlank.
Consider on $X$ the \'etale sheaf 
  \[
    \DIV(X) = \bigoplus_p \ZZ_{/p}\,,
  \]  
where we let $p$ range over all closed points of $X$ and $\ZZ_{/p}$ means that we consider the skyscraper sheaf at that point. We now define the complex $\mathcal{C}$, which is a resolution of $\mathbb{G}_{m,X}$, as
  \[ 
    j_\ast \mathbb{G}_{m,K} \xrightarrow{\divis} \DIV X\,,
  \] 
where $j_*$ is the inclusion of the generic point, $j_\ast \mathbb{G}_{m,K}$ is in degree $0$, and the map $\divis$ is as in \cite[II 3.9]{MilneEtale}. By push-forward with $\phi_\ast$, we get a complex $\phi_\ast \mathcal{C}$.
It is easy to see that the complex $\phi_\ast \mathcal{C}$ is a resolution of $\phi_\ast \mathbb{G}_{m,X}$. We now define $\mathcal{E}_n$ as the complex 
  \[
    \ZZ \xrightarrow{n} \ZZ
  \] 
of constant sheaves on $\tilde{X}$, where the non-zero terms are in degree $-1$ and $0$. This is of course just a resolution of $\ZZ/n\ZZ$, concentrated in degree $0$. Let $\HOM$ denote the internal hom in the derived category of abelian sheaves and consider the complex $\HOM(\mathcal{E}_n,\phi_\ast \mathcal{C})$, whose components are 
  \[ 
    \phi_\ast j_* \mathbb{G}_{m,K} \xrightarrow{\begin{pmatrix} -n \\ \divis\end{pmatrix}} \phi_\ast j_* \mathbb{G}_{m,K} \oplus \phi_\ast \DIV X \xrightarrow{\begin{pmatrix} \divis & n\end{pmatrix}}  \phi_\ast \DIV X\,.
  \]
Since $\mathcal{E}_n$ is a complex of free sheaves, we have 
  \[
    \HOM(\mathcal{E}_n,\phi_\ast \mathcal{C}) \cong \RHOM(\ZZ/n\ZZ,\phi_\ast \mathbb{G}_{m,X})
  \] 
in $\Drv(\Xcet)$. The plan for computing $\Ext^i_{\tilde{X}}(\ZZ/n\ZZ,\phi_\ast \mathbb{G}_{m,X})$ is to use the hypercohomology spectral sequence, applied to $\HOM(\mathcal{E}_n,\phi_\ast \mathcal{C})$. To do this we need the cohomology of $\phi_\ast j_* \mathbb{G}_{m,K}$ and $\phi_\ast \DIV X$ as input.

\begin{prop}[{\cite[Prop. 2.5, Prop. 2.6]{BienenfeldEtale}}] \label{prop:Bienencoh}
Let  $\phi_\ast j_* \mathbb{G}_{m,K}$ be as above. Then
  \[
    H^i(\tilde{X},\phi_\ast j_* \mathbb{G}_{m,K}) = 
    \begin{cases} 
      K^\times & \mbox{ if } i= 0 \\
      0  & \mbox{ if } i= 1 \\
      \Br_0 K & \mbox{ if } i=2 \\
      0  & \mbox{ if } i > 2 \\
    \end{cases}
  \]
where $\Br_0 K$ is the subgroup of the Brauer group of $K$ which has zero local invariants at the real infinite primes. For $\phi_\ast \DIV X$ we have
  \[
    H^i(\tilde{X},\phi_\ast \DIV X) = 
    \begin{cases} 
      \bigoplus_{x \in X} \ZZ  & \mbox{ if } i= 0 \\
      0  & \mbox{ if } i= 1 \\
      \bigoplus_{x \in X} \Br K_x & \mbox{ if } i=2 \\
      0  & \mbox{ if } i > 2 \\
    \end{cases}
  \]
where the direct sum ranges over the closed points in $X$ and $\Br K_x$ is the Brauer group of the completion of $K$ at $x$.
\end{prop}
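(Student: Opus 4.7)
The plan is to use the local cohomology long exact sequence
\begin{equation*}
\cdots \to H^i(X_\infty, \kappa^! \phi_\ast F) \to H^i(\tilde X, \phi_\ast F) \to H^i(X, F) \to H^{i+1}(X_\infty, \kappa^! \phi_\ast F) \to \cdots
\end{equation*}
together with the explicit formula for the left-hand term recalled in the proof of Lemma \ref{lem:localcoh}. Since $\sigma_x = \id$ for every sheaf of the form $\phi_\ast F$, this term vanishes in degrees $0$ and $1$ and equals $\bigoplus_{x \in X_\infty} H^{i-1}(I_{\tilde x}, j^\ast F)$ for $i \geq 2$. Each claim then reduces to computing $H^i(X, F)$ on the ordinary \'etale site together with the Galois cohomology of $j^\ast F$ at each archimedean decomposition group.

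For $F = \DIV X$ the argument is immediate: as a direct sum of skyscrapers at the closed (non-archimedean) points, $j^\ast \DIV X = 0$, so every correction term vanishes and $H^i(\tilde X, \phi_\ast \DIV X) \cong H^i(X, \DIV X)$. Writing $\DIV X = \bigoplus_p (i_p)_\ast \ZZ$ reduces the right-hand side to $\bigoplus_p H^i(G_{k(p)}, \ZZ)$ with $G_{k(p)} \cong \hat{\ZZ}$ acting trivially. Using the short exact sequence $0 \to \ZZ \to \mathbb{Q} \to \mathbb{Q}/\ZZ \to 0$ together with the vanishing of profinite cohomology with uniquely divisible coefficients gives $\ZZ, 0, \mathbb{Q}/\ZZ, 0$ in degrees $0, 1, 2, \geq 3$, and the identification $\mathbb{Q}/\ZZ \cong \Br K_p$ is the local invariant isomorphism of local class field theory.

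For $F = j_\ast \mathbb{G}_{m,K}$ I would first compute $H^i(X, F)$ via the Cartier divisor short exact sequence
\begin{equation*}
0 \to \mathbb{G}_{m,X} \to j_\ast \mathbb{G}_{m,K} \to \DIV X \to 0
\end{equation*}
on $\Xet$, combined with the cohomology of $\DIV X$ just obtained and the classical $H^0(X, \mathbb{G}_m) = \OO_K^\ast$, $H^1(X, \mathbb{G}_m) = \Cl K$. The boundary maps are the divisor map $K^\ast \to \Div K$ (with cokernel $\Cl K$) and the Hasse invariant map; the long exact sequence then produces $K^\ast, 0, \Br K$ in degrees $0, 1, 2$. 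At each archimedean place, $j^\ast F = \bar K^\ast$ and $H^q(I_{\tilde x}, \bar K^\ast)$ vanishes for complex $x$, while for real $x$ it vanishes in odd positive degrees (profinite Hilbert 90) and equals $\Br\,\mathbb{R} \cong \ZZ/2 \cong \Br K_x$ in positive even degrees, using that $\mathbb{C}^\ast/\bar K^\ast$ is uniquely divisible. Feeding these inputs into the local cohomology sequence gives $K^\ast, 0$ in degrees $0, 1$ and the crucial segment
\begin{equation*}
0 \to H^2(\tilde X, \phi_\ast j_\ast \mathbb{G}_{m,K}) \to \Br K \xrightarrow{\oplus_x \inv_x} \bigoplus_{x\text{ real}} \Br K_x \to H^3(\tilde X, \phi_\ast j_\ast \mathbb{G}_{m,K}) \to 0,
\end{equation*}
whose middle arrow is identified with the sum of real Hasse invariants. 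By definition $\Br_0 K$ is its kernel, so $H^2 \cong \Br_0 K$; surjectivity of this map (Hasse--Brauer--Noether) gives $H^3 = 0$, and the analogous two-periodic cancellations between $H^q(X, F)$ and $H^{q-1}(I_{\tilde x}, \bar K^\ast)$ in higher degrees force $H^{\geq 3}(\tilde X, \phi_\ast j_\ast \mathbb{G}_{m,K}) = 0$.

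The hardest step is identifying the connecting map $H^2(X, j_\ast \mathbb{G}_{m,K}) \to H^3(X_\infty, \kappa^! \phi_\ast j_\ast \mathbb{G}_{m,K})$ with the sum of archimedean Hasse invariants, because this identification is exactly what replaces $\Br K$ by $\Br_0 K$ in the final answer; similarly, one must verify that the higher boundary maps realize Tate's global-to-local isomorphism, which is what guarantees the vanishing of higher cohomology. These compatibilities express the local-global functoriality at archimedean places underlying Artin--Verdier duality.
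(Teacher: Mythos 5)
The paper offers no proof of this proposition --- it is quoted directly from Bienenfeld --- so I can only assess your reconstruction on its own terms. Your overall strategy (the local cohomology sequence for $X_\infty$ together with the computation of $H^i(X_\infty,\kappa^!\phi_*F)$ recalled in the proof of Lemma \ref{lem:localcoh}) is the right one, and the $\phi_*\DIV X$ half is complete and correct, as is your treatment of the archimedean correction terms for $j_*\mathbb{G}_{m,K}$ (unique divisibility of $\mathbb{C}^*/\bar{K}^*$, Hilbert 90 in odd degrees, $\ZZ/2$ in positive even degrees).

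The gap is in your computation of $H^i(\Xet,j_*\mathbb{G}_{m,K})$ for $i\geq 2$. The divisor sequence $0\to\mathbb{G}_{m,X}\to j_*\mathbb{G}_{m,K}\to\DIV X\to 0$, with only the classical inputs $H^0(X,\mathbb{G}_{m,X})=\OO_K^*$ and $H^1(X,\mathbb{G}_{m,X})=\Cl K$, does determine degrees $0$ and $1$ (giving $K^*$ and $0$, since $\Div K\to\Cl K$ is surjective and $H^1(X,\DIV X)=0$), but in degree $2$ the relevant segment is
\begin{equation*}
0\to H^2(X,\mathbb{G}_{m,X})\to H^2(X,j_*\mathbb{G}_{m,K})\to\bigoplus_p\Br K_p\to H^3(X,\mathbb{G}_{m,X})\to\cdots
\end{equation*}
and the groups $H^{\geq 2}(X,\mathbb{G}_{m,X})$ are not classical inputs: they are normally \emph{deduced} from the cohomology of $j_*\mathbb{G}_{m,K}$ via this very sequence, so your appeal to ``the Hasse invariant map'' here is circular. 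The standard non-circular route is the Leray spectral sequence for $j\colon\Spec K\to X$: one has $R^1j_*\mathbb{G}_m=0$ by Hilbert 90 and $R^2j_*\mathbb{G}_m=0$ because the Brauer group of the maximal unramified extension of a local field vanishes, whence $H^2(X,j_*\mathbb{G}_{m,K})\cong H^2(K,\mathbb{G}_m)=\Br K$; and for $q\geq 3$ one needs Tate's theorem $H^q(G_K,\bar{K}^*)\cong\bigoplus_{v\text{ real}}H^q(G_v,\bar{K}_v^*)$ (so in particular $H^3(K,\mathbb{G}_m)=0$) to see that $H^q(\Xet,j_*\mathbb{G}_{m,K})$ consists exactly of the archimedean contributions that your correction terms must cancel. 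With these inputs, the identification of $H^2(X,j_*\mathbb{G}_{m,K})\to H^3(X_\infty,\kappa^!\phi_*j_*\mathbb{G}_{m,K})$ with the sum of real invariants, the surjectivity from Hasse--Brauer--Noether, and the two-periodic cancellation in higher degrees all go through as you indicate, yielding $\Br_0 K$ in degree $2$ and $0$ above.
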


We have a map
  \[
    \Gamma(\tilde{X},\HOM(\mathcal{E}_n,\phi_\ast \mathcal{C})) \rightarrow \RHom(\ZZ/n\ZZ,\phi_\ast \mathbb{G}_{m,X})\,,
  \] 
induced from the map $\Gamma \rightarrow \RGamma$. Since $\Gamma(\tilde{X},\HOM(\mathcal{E}_n,\phi_\ast \mathcal{C}))$ is $2$-truncated, this map will factor through $\tau^{\leq 2} ( \RHom(\ZZ/n\ZZ,\phi_\ast \mathbb{G}_{m,X}))$. We denote by 
  \[
    \psi \colon \Gamma(\tilde{X}, \HOM(\mathcal{E}_n,\phi_\ast \mathcal{C})) \rightarrow \tau^{\leq 2}( \RHom(\ZZ/n\ZZ,\phi_\ast \mathbb{G}_{m,X}))
  \] 
the resulting map. As in \cite[Lemma 4.2]{CarlsonSchlankUnramified}, $\psi$ is quickly shown to be an isomorphism. This allows one to compute the Ext-groups we are after. If $a \in K^\times$ we let $\divis(a)$ be the divisor in $\Div(X)$ determined by the corresponding fractional ideal in $\OO_K$.

\begin{cor}\label{cor:valuesofext}
Let $X = \Spec \OO_K$ for $K$ a number field. Then 
  \[
    \Ext^i_{\tilde{X}}(\ZZ/n\ZZ,\phi_\ast \mathbb{G}_{m,X}) =  
    \begin{cases} \mu_n(K) & \mbox{ if } i= 0 \\
      Z_1/B_1 & \mbox{ if } i= 1 \\
      \Pic X /n & \mbox{ if } i=2 \\
      \ZZ/n\ZZ & \mbox{ if } i=3 \\
      0 &  \mbox{ if } i>3\,.
    \end{cases}
  \] 
Here 
  \[
    Z_1 = \{(a,\mathfrak{a}) \in K^\times \oplus \Div X : \divis(a)+n\mathfrak{a} = 0\}
  \] 
and 
  \[
    B_1 = \{(b^{-n},\divis(b)) \in K^\times \oplus \Div X : b \in K^\times \}\,.
  \]
\end{cor}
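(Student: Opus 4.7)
My plan is to split the computation into two ranges. For $i \le 2$, I would use the isomorphism $\psi$ constructed in the preceding discussion to reduce the problem to the cohomology of a three-term complex of abelian groups. For $i \geq 3$, Artin--Verdier duality (Theorem \ref{thm:AVduality}) reduces everything to the known cohomology of the constant sheaf $\ZZ/n\ZZ$ in degree $0$ and in negative degrees.

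To handle $i = 0, 1, 2$: since $\psi$ is a quasi-isomorphism onto $\tau_{\leq 2} \RHom_{\tilde{X}}(\ZZ/n\ZZ, \phi_\ast \mathbb{G}_{m,X})$, we have $\Ext^i_{\tilde{X}}(\ZZ/n\ZZ, \phi_\ast \mathbb{G}_{m,X}) \cong H^i(\Gamma(\tilde{X}, \HOM(\mathcal{E}_n, \phi_\ast \mathcal{C})))$ for $i \leq 2$. Taking global sections via the degree-zero part of Proposition \ref{prop:Bienencoh}, namely $\Gamma(\tilde{X}, \phi_\ast j_\ast \mathbb{G}_{m,K}) = K^*$ and $\Gamma(\tilde{X}, \phi_\ast \DIV X) = \Div X$, produces the explicit complex
$$K^* \xrightarrow{d_0} K^* \oplus \Div X \xrightarrow{d_1} \Div X,$$
with $d_0(b) = (b^n, -\divis(b))$ and $d_1(a, \mathfrak{a}) = \divis(a) + n\mathfrak{a}$, obtained by unwinding the differentials in $\HOM(\mathcal{E}_n, \phi_\ast \mathcal{C})$. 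Then $\ker d_0 = \{b \in K^* : b^n = 1\} = \mu_n(K)$ gives $\Ext^0 = \mu_n(K)$; comparing $\ker d_1 / \im d_0$ with the definition in the statement gives $\Ext^1 = Z_1/B_1$ on the nose; and $\coker d_1 = \Div X / (\divis(K^*) + n\Div X) \cong \Cl X / n = \Pic X / n$ gives $\Ext^2 = \Pic X / n$.

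For $i = 3$ and $i > 3$, I would apply Artin--Verdier duality (Theorem \ref{thm:AVduality}) to the constructible sheaf $F = \ZZ/n\ZZ$; the hypothesis holds trivially since $\ZZ/n\ZZ$ is constant. This yields $H^i(\tilde{X}, \ZZ/n\ZZ) \cong \Ext^{3-i}_{\tilde{X}}(\ZZ/n\ZZ, \phi_\ast \mathbb{G}_{m,X})^\sim$ for every $i$. Taking $i = 0$ and using that $H^0(\tilde{X}, \ZZ/n\ZZ) = \ZZ/n\ZZ$ (which is Pontryagin self-dual) gives $\Ext^3 = \ZZ/n\ZZ$; taking $i < 0$ gives $\Ext^j = 0$ for $j > 3$.

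The main obstacle, which is essentially settled by the preceding discussion, is the correct sign bookkeeping in the $\HOM$ double complex with $\mathcal{E}_n$ placed in degrees $-1, 0$, together with verifying that the Brauer-group contributions from the $q=2$ row of the hypercohomology spectral sequence (see Proposition \ref{prop:Bienencoh}) do not disturb the low-degree computation. The latter concern is automatic from the truncation statement built into $\psi$, so the only real work is the routine differential computation yielding the formulas for $d_0$ and $d_1$ above.
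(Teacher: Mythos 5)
Your proposal is correct and follows essentially the same route as the paper: degrees $i\le 2$ are read off from the global sections of $\HOM(\mathcal{E}_n,\phi_\ast\mathcal{C})$ via the isomorphism $\psi$ (your formulas for $d_0$, $d_1$ and the resulting identifications of kernel, middle cohomology, and cokernel with $\mu_n(K)$, $Z_1/B_1$, and $\Pic X/n$ all check out, up to an immaterial overall sign), while degrees $i\ge 3$ follow from Artin--Verdier duality applied to the constant sheaf. The paper instead cites Bienenfeld's Prop.~2.13 for the degrees $i=3$ and $i>3$, but your derivation from $H^0(\tilde{X},\ZZ/n\ZZ)=\ZZ/n\ZZ$ and the vanishing of negative-degree cohomology is an equivalent and self-contained substitute.
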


\begin{rk}
  Note that we have $\Ext^i_X(\ZZ/n\ZZ,\GG_{m,X})\cong H^i(X_{\fl},\mmu_n)$ for all $i$, where $X_{\fl}$ denotes the big fppf site on $X$. This can be seen as follows: we have quasi-isomorphisms $(\ZZ \xrightarrow{n}\ZZ)\simeq \ZZ/n\ZZ$ and $\mmu_n \simeq (\GG_m \xrightarrow{n}\GG_m)$ of complexes of fppf sheaves and we get 
    $
      R\HOM(\ZZ/n\ZZ, \GG_m)\simeq \HOM(\ZZ \xrightarrow{n} \ZZ, \GG_m)\simeq (\GG_m \xrightarrow{n}\GG_m)\simeq \mmu_n   
    $  
  since $(\ZZ \xrightarrow{n}\ZZ)$ is a complex of free sheaves. 
  Applying $R\Gamma(X, -)$, we get that $R\Hom(\ZZ/n\ZZ, \GG_{m,X})\simeq R\Gamma(X, \mmu_n)$\,.
\end{rk}

The corollary above gives us a concrete description of $H^i(\tilde{X},\ZZ/n\ZZ)$ for all $i$. Indeed, to remind the reader, since $\phi_\ast \ZZ/n\ZZ$ is the constant sheaf on $\tilde{X}$ with value $\ZZ/n\ZZ$, Theorem \ref{thm:AVduality} applies. Thus, with $Z_1$ and $B_1$ as in Corollary \ref{cor:valuesofext}:
  \[
    H^i(\tilde{X},\ZZ/n\ZZ) = 
    \begin{cases} 
      \ZZ/n\ZZ & \mbox{ if } i= 0 \\
      (\Pic(X)/n)^\sim & \mbox{ if } i= 1 \\
      (Z_1/B_1)^\sim & \mbox{ if } i=2 \\
      \mu_n(K)^\sim & \mbox{ if } i=3 \\
      0 &  \mbox{ if } i>3.
    \end{cases}
  \]

\subsection{Galois coverings}
Let $G$ be a finite group. Any element $x$ in the pointed set $H^1(X,G)$ of right $G$-torsors in $\Xet$ can be represented by a Galois $G$-cover $f\colon Y \rightarrow X$. The goal of what remains in this section is to show that any element $x \in H^1(\Xcet,G)$ can be represented by a Galois $G$-cover in $\Xcet$. To make sense of this, we must of course define what a Galois covering should be in the category $\Xcet$. This is done in the following definition. A similar definition was made by Zink in \cite[Appendix 2, 2.6.1]{HaberlandGalois}.

\begin{df} \label{def:galoiscover}
Let $X = \Spec \OO_K$ be the ring of integers of a number field, and let $f \colon (Y,M) \rightarrow (X,X_\infty)$ be an object of $\Xcet$. Assume that the finite group $G$ acts on $f$ to the right. Then we say that $f$ is a Galois covering with Galois group $G$ if:
  \begin{enumerate}
    \item $f\colon Y \rightarrow X$ is a (not necessarily connected) Galois covering with Galois group $G$ in $\Xet$.
    \item The action of $G$ on $M$ is free, and $f$ induces an isomorphism $f\colon M/G \rightarrow X_\infty$.
  \end{enumerate}
\end{df}

\begin{rk} \label{rmk:galois}
Note that if $f\colon (Y,M) \rightarrow (X,X_\infty)$ is a Galois covering with Galois group $G$, then $M= Y_\infty$. Further, it is clear that every connected Galois covering with Galois group $G$ gives rise to a Galois extension $L/K$ with Galois group $G$ that is unramified at all places, including the infinite ones. Conversely, given a Galois extension $L$ of $K$ with Galois group $G$ that is unramified at the finite as well as at the infinite places, one gets a connected Galois covering with Galois group $G$.
\end{rk}

For any Galois $G$-covering $\tilde{Y} = (Y,Y_\infty) \rightarrow (X,X_\infty)$, the functor $h_{\tilde{Y}} = \Hom_{\Xcet}(-,\tilde{Y})$ gives rise to a (right) $G$-torsor in $\Sh(\Xcet)$. We thus have a map 
  \[
    c \colon \frac{\{ \text{Galois G-coverings of }(X,X_\infty) \}}{\sim} \rightarrow \frac{\{\text{G-torsors on } \Xcet\}}{\sim}
  \] 
where $\sim$ means we are passing to isomorphism classes. The following lemma is a straightforward application of descent theory (see for example \cite[Theorem 4.3]{MilneEtale}).

\begin{lem} \label{lemma:torsorGalois}
Let $G$ be a finite group. Then the map $c$ defined above is a bijection.
\end{lem}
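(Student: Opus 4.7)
The plan is to construct an explicit inverse to $c$ using the equivalence $\Sh(\Xcet) \simeq \mathcal{S}_X$ of Proposition \ref{prop:BienenEquiv}, which reduces the statement to classical torsor descent on $\Xet$ plus elementary combinatorial data at the infinite places. Injectivity of $c$ will then follow from Yoneda.

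First I would show surjectivity. Given a $G$-torsor $T$ on $\Xcet$, identify it with a triple $(T_\infty, T_\et, \sigma)$ where $T_\et = \phi^*T \in \Sh(\Xet)$ and $T_\infty = \kappa^*T \in \Sh(X_\infty)$. Since $\phi^*$ and $\kappa^*$ are exact and preserve the $G$-action, $T_\et$ is a $G$-torsor on $\Xet$ and each $T_\infty(x)$ is a $G$-torsor in $\Set$. By the étale version of torsor descent (\cite[Theorem 4.3]{MilneEtale}), $T_\et$ is representable by a Galois $G$-covering $f\colon Y \to X$ in $\Xet$.

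Next I would reconstruct $M \subset Y_\infty$ from the gluing data $\sigma_x\colon T_\infty(x) \to (j^*T_\et)^{I_{\tilde x}}$. Under the identification $T_\et \cong h_Y$, the target $(j^*T_\et)^{I_{\tilde x}}$ is naturally the set of primes of $Y$ above $x$ that are real whenever $x$ is real (i.e.\ the $I_{\tilde x}$-fixed points of the geometric fiber). Because $G$ acts freely on $Y(\bar K)_x$, it acts freely on the target of $\sigma_x$, so the $G$-equivariant map $\sigma_x$ out of the transitive $G$-set $T_\infty(x)$ must be injective. Set $M_x := \sigma_x(T_\infty(x))$ and $M := \coprod_{x \in X_\infty} M_x$. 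By construction $G$ acts freely on $M$, each $M_x$ is a single $G$-orbit so $M/G \cong X_\infty$, and $M_x$ consists of real primes whenever $x$ is real; thus $(Y,M) \to (X,X_\infty)$ is an object of $\Xcet$ and is a Galois $G$-covering in the sense of Definition \ref{def:galoiscover}.

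To close the loop I would verify that $h_{(Y,M)} \cong T$ as $G$-torsors on $\Xcet$. Under the equivalence with $\mathcal{S}_X$ the triple attached to $h_{(Y,M)}$ is $(M,\,h_Y,\,\iota)$ where $\iota_x\colon M_x \hookrightarrow (j^*h_Y)^{I_{\tilde x}}$ is the inclusion; by construction this triple agrees with $(T_\infty, T_\et, \sigma)$, giving the required isomorphism. Injectivity of $c$ follows: an isomorphism of torsors $h_{(Y_1,M_1)} \xrightarrow{\sim} h_{(Y_2,M_2)}$ yields, on applying $\phi^*$, an isomorphism $h_{Y_1} \cong h_{Y_2}$, hence $Y_1 \cong Y_2$ over $X$ by Yoneda, and the commuting gluing data forces the corresponding subsets $M_1$ and $M_2$ to coincide under this isomorphism. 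The main technical obstacle is verifying that the triple associated to a representable sheaf $h_{(Y,M)}$ really has $F_\infty(x)=M_x$ with $\sigma_x$ the inclusion into the $I_{\tilde x}$-fixed part, and that $\sigma_x$ is forced to be injective — both follow from unwinding the definition of $\Xcet$ and the freeness of the $G$-action on the geometric fiber of a Galois covering.
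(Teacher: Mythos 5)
Your argument is correct and is exactly the route the paper has in mind: the paper disposes of this lemma with the single remark that it is ``a straightforward application of descent theory (see \cite[Theorem 4.3]{MilneEtale})'', and your proposal simply fills in the details — classical \'etale descent recovers $Y$, and the triple description of $\Sh(\Xcet)$ from Proposition \ref{prop:BienenEquiv} together with freeness of the $G$-action on the geometric fibre forces $\sigma_x$ to be injective and recovers $M$. The key observations you supply (that $\sigma_x$ is $G$-equivariant out of a transitive free $G$-set, hence injective, and that its image automatically consists of real primes over real places) are precisely the points the paper leaves to the reader, and they also re-derive the fact, noted in Remark \ref{rmk:galois}, that $M$ must equal $Y_\infty$.
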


If $G$ is abelian, then the set of isomorphism classes of (right) $G$-torsors has the structure of an abelian group. Indeed, if $\mathcal{F}_1,\mathcal{F}_2$ are $G$-torsors, define 
  \[
    \mathcal{F}_1 \wedge^G \mathcal{F}_2 = (\mathcal{F}_1 \times \mathcal{F}_2)/G
  \] 
where $g \in G$ acts by taking $(x,y)$ to $(xg^{-1},yg)$. The operation $\wedge^G$ respects isomorphism classes and descends to an operation on the set of isomorphism classes of $G$-torsors. One can then verify that $\wedge^G$ gives the set of isomorphism classes of (right) $G$-torsors the structure of an abelian group with unit the trivial $G$-torsor. In any Grothendieck topos $\mathcal{T}$, we have that if $G$ is an abelian group, then the group $H^1(\mathcal{T},G)$ is isomorphic to the isomorphism classes of $G$-torsors on $\mathcal{T}$. Now lemma \ref{lemma:torsorGalois} and the fact that $\Sh(\Xcet)$ is a Grothendieck topos gives the following result.

\begin{lem} \label{lemma:torsorGalois2}
Let $X = \Spec \OO_K$ be the ring of integers of a number field. Then if $G$ is the constant sheaf associated to a finite group, any element $x \in H^1(\tilde{X},G)$ can be represented by a Galois cover $(Y,Y_\infty) \rightarrow (X,X_\infty)$ in $\Xcet$ with Galois group $G$.
\end{lem}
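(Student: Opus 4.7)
The plan is to assemble two classification results into a bijection between isomorphism classes of Galois $G$-coverings of $(X,X_\infty)$ in $\Xcet$ and $H^1(\tilde{X},G)$, from which the lemma follows immediately.

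First I would invoke the standard fact that, in any Grothendieck topos $\mathcal{T}$ and for any abelian group object $G$, the cohomology group $H^1(\mathcal{T},G)$ is in natural bijection with the set of isomorphism classes of (right) $G$-torsors in $\mathcal{T}$. Since $\Sh(\Xcet)$ is a Grothendieck topos, as already observed in the passage preceding the lemma, this applies to give every class $x \in H^1(\tilde{X},G)$ as the class of some $G$-torsor $\mathcal{F}$ on $\Xcet$.

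Next I would apply Lemma \ref{lemma:torsorGalois}, which asserts that the map $c$ sending a Galois $G$-covering $(Y,Y_\infty) \to (X,X_\infty)$ to the representable torsor $h_{(Y,Y_\infty)} = \Hom_{\Xcet}(-,(Y,Y_\infty))$ is a bijection onto isomorphism classes of $G$-torsors on $\Xcet$. Hence the torsor $\mathcal{F}$ produced in the first step is isomorphic to $h_{(Y,Y_\infty)}$ for some Galois $G$-covering $(Y,Y_\infty) \to (X,X_\infty)$, and by construction this covering represents the class $x$.

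The only substantive input is Lemma \ref{lemma:torsorGalois} itself, which in turn relies on descent theory on the Artin--Verdier site; the present lemma is purely a formal consequence, obtained by composing the two canonical bijections. The one point worth checking in a full write-up is that the bijection $H^1(\tilde{X},G) \cong \{G\text{-torsors}\}/{\sim}$ is indeed available in the generality of an arbitrary Grothendieck topos with a constant sheaf $G$, but this is a textbook result and requires no novel argument here.
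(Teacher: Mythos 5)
Your argument is exactly the one the paper uses: the lemma is stated as an immediate consequence of the torsor interpretation of $H^1$ in a Grothendieck topos together with Lemma \ref{lemma:torsorGalois}, which is precisely your composition of the two bijections. The proposal is correct and matches the paper's reasoning.
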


In Remark \ref{rmk:galois}, we noted that any \emph{connected} Galois covering $p \colon (Y,Y_\infty) \rightarrow (X,X_\infty)$ with finite Galois group $G$ was represented by an unramified extension of $K$. If $Y$ is not connected, we now show that $Y$ is isomorphic to a covering that is induced from a subgroup of $G$. If $H \subset G$ is a subgroup of $G$ and $q\colon (Z,Z_\infty) \rightarrow (X,X_\infty)$ is a Galois $H$-covering, we can form the induced cover $(Y,Y_\infty) := \Ind^G_H((Z,Z_\infty)) \rightarrow (X,X_\infty)$ as follows. Let $n=[G:H]$ and choose a set $\{g_1, \ldots, g_n\}$ of right coset representatives of $H$ in $G$. For each $g_i$, we denote by $(Z,Z_\infty)g_i$ a copy of $(Z,Z_\infty)$, which should be seen as marked by $g_i$. We then define the map 
  \[
    p\colon \Ind^G_H((Z,Z_\infty)) = \amalg_{i=1}^n (Z,Z_\infty)g_i \rightarrow (X,X_\infty)
  \] 
on the component $(Z,Z_\infty)g_i$ as just the map $q\colon (Z,Z_\infty) \rightarrow (X,X_\infty)$. We now define the action of $G$ on $\Ind^G_H(Z) = \amalg_{i=1}^n (Z,Z_\infty)g_i$. If $x \in (Z,Z_\infty)g_i$, $g \in G$, then there exists a unique coset representative $g_j$ such that $g_ig = hg_j$ for some $h \in H$. We then let 
  \[
    xg := xh \in (Z,Z_\infty)g_{j}\,.
  \]   
This is just the Galois-covering analogue of an induced representation. The induced coverings generate all Galois coverings in the sense of the following Lemma.

\begin{lem}\label{lemma:torsorinduced}
Let $G$ be a finite group and $H \subset G$ a subgroup. Then for any Galois $H$-cover $q\colon (Z,Z_\infty) \rightarrow (X,X_\infty)$, the induced cover
  \[
    \Ind^G_H((Z,Z_\infty)) \rightarrow (X,X_\infty)
  \] 
is a Galois $G$-cover. Conversely, any Galois $G$-cover $p\colon (Y,Y_\infty) \rightarrow (X,X_\infty)$ is isomorphic to a Galois $G$-covering $\Ind^G_H((Z,Z_\infty))$ for a subgroup $H \subset G$ unique up to conjugation, and a connected Galois $H$-covering $(Z,Z_\infty) \rightarrow (X,X_\infty)$, unique up to isomorphism.
\end{lem}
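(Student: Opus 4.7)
The plan is to prove both directions by reducing to the classical Galois-covering statement in $\Xet$ while carefully tracking the infinite-place data.

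For the forward direction, fix right coset representatives $\{g_1=e,g_2,\ldots,g_m\}$ for $H$ in $G$ and form $(Y,Y_\infty)=\Ind^G_H((Z,Z_\infty))=\amalg_{i=1}^m (Zg_i,Z_\infty g_i)$. The map $Y\to X$ is étale as a disjoint union of copies of $Z\to X$. The $G$-orbit of a point $x\in Zg_i$ equals $\{(xh)g_j:h\in H,\,1\le j\le m\}$, which is in bijection with the $H$-orbit of $x$ in $Z$; thus $Y/G=Z/H=X$. Freeness is verified directly: if $xg=x$ and $g_ig=hg_j$, then $j=i$ and $xh=x$ in $Z$, so $h=e$ by freeness of the $H$-action, whence $g=e$. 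The identical argument applied to $Y_\infty=\amalg_i Z_\infty g_i$ shows that $G$ acts freely on $Y_\infty$ with $Y_\infty/G=Z_\infty/H=X_\infty$, so $(Y,Y_\infty)\to (X,X_\infty)$ is a Galois $G$-cover.

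For the converse, write $Y=\amalg_{i=1}^m Y_i$ as a disjoint union of connected components. Each map $Y_i\to X$ is surjective since its image is open, closed, and nonempty in the connected scheme $X$. The $G$-action permutes the components, and does so transitively: otherwise the $G$-orbits on $\{Y_i\}$ would split $Y$ into two $G$-stable pieces whose quotients assemble into a disconnected $Y/G$, contradicting $Y/G=X$. Pick $Z:=Y_1$ and set $H:=\{g\in G\mid Zg=Z\}$, so that $[G:H]=m$ and $Z\to X$ is a connected Galois $H$-cover in $\Xet$. Taking $Z_\infty$ to be the set of infinite places of $Y$ contained in $Z$, the $G$-action on $Y_\infty$ restricts to an $H$-action on $Z_\infty$; combined with $Y_\infty/G=X_\infty$ and transitivity on components, this forces $H$ to act freely on $Z_\infty$ with $Z_\infty/H=X_\infty$. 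Thus $(Z,Z_\infty)\to(X,X_\infty)$ is a connected Galois $H$-cover, and the $G$-equivariant map $\Ind^G_H((Z,Z_\infty))\to(Y,Y_\infty)$, $zg_i\mapsto z\cdot g_i$, is an isomorphism.

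For uniqueness, any other connected component of $Y$ is of the form $Zg$ for some $g\in G$, with stabilizer $g^{-1}Hg$, so $H$ is determined up to conjugation and $Z$ up to isomorphism as a Galois $H$-cover. The main obstacle I anticipate is the transitivity argument in the converse, together with the bookkeeping ensuring that the infinite-place data on $Z$ assembles correctly as a Galois $H$-cover; everything else is a routine unwinding of definitions.
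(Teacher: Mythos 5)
Your proof is correct and follows essentially the same route as the paper's (which only sketches it): verify directly that the induced cover is Galois, and conversely take a connected component $Z$ of $Y$, let $H$ be its stabilizer, and identify $(Y,Y_\infty)$ with $\Ind^G_H((Z,Z_\infty))$, with uniqueness coming from the fact that the other components are the translates $Zg$ with stabilizers $g^{-1}Hg$. You simply fill in the details (transitivity on components, the bookkeeping at the infinite places, uniqueness) that the paper explicitly leaves to the reader.
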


\begin{proof}
This is standard, so we will indicate the proof and leave some of the details to the reader. It is clear that $\Ind^G_H((Z,Z_\infty))$ is a Galois $G$-cover. To see that any Galois $G$-cover arises in this way for a unique subgroup $H \subset G$ and a unique connected Galois $H$-cover $q\colon (Z,Z_\infty) \rightarrow (X,X_\infty)$, let $p\colon (Y,Y_\infty) \rightarrow (X,X_\infty)$ be a Galois $G$-cover. If $Y$ is connected, the statement is trivial, since we then can take $H = G$ and $(Z,Z_\infty) = (Y,Y_\infty)$. Thus we assume that $Y$ is not connected, say $Y_1, \ldots, Y_n$ are its components. We let $H \subset G$ consist of those $g \in G$ such that $(Y_1,(Y_1)_\infty)g \subset (Y_1,(Y_1)_\infty).$ Choose $g_1, \ldots, g_n$ such that $((Y_1,(Y_1)_\infty))g_i= (Y_i,(Y_i)_\infty)$. We then see that the $(Y_i,(Y_i)_\infty)$ are isomorphic to each other. Further, $(Y_1,(Y_1)_\infty)$ is a Galois $H$-cover since $Y$ is a Galois $G$-cover, and it is clear that 
  \[
    (Y,Y_\infty)  \cong \Ind^G_H((Y_1,(Y_1)_\infty))\,.
  \] 
The unicity claims are left to the reader.
\end{proof}

Note that Lemma \ref{lemma:torsorinduced} has the consequence that any 
  \[
    x \in H^1(\tilde{X},\ZZ/n\ZZ) \cong (\Cl K /n \Cl K)^\sim
  \] 
can be represented by an unramified cyclic extension $L/K$ of degree $d$ dividing $n$, together with a choice of generator $\sigma\in \Gal(L/K)$, in the sense that $x$ can be represented by a Galois $\ZZ/n\ZZ$-covering of the form 
  \[
    \Ind^{\ZZ/n\ZZ}_{\ZZ/d\ZZ}((Y,Y_\infty)) \rightarrow  (X,X_\infty)\,,
  \] 
where $Y = \Spec \OO_L$. 

We now move on to the last lemma of the section. Let $p \colon \tilde{Y} \rightarrow \tilde{X}$ be a Galois cover of $\tilde{X}$ with Galois group $G$. If $F$ is an abelian sheaf on $\Xcet$, we say that $F$ is $p$-split if $p^*F$ is a constant sheaf on $\Ycet$. There is a natural action of $G$ on $p^*F$ and in this manner we get a functor from the category of sheaves split by $p$ to the category of $G$-modules. The proof of the following lemma follows once again from standard descent theory.

\begin{lem} \label{lemma:locconst}
Let $p \colon \tilde{Y} \rightarrow \tilde{X}$ be a Galois cover of $\tilde{X}$ with Galois group $G.$ Then the category of $p$-split abelian sheaves on $\Xcet$ is equivalent to the category of left $G$-modules.
\end{lem}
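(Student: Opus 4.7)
The plan is to invoke descent along the Galois cover $p\colon \tilde{Y} \rightarrow \tilde{X}$, which I recall forms a covering family $\{p\}$ in the site $\Xcet$ (this uses that $p$ is étale and surjective on the underlying schemes and on the infinite places, by Definition \ref{def:galoiscover}). The functor $\Phi$ from $p$-split abelian sheaves to left $G$-modules will be $F \mapsto F(\tilde{Y}) = (p^*F)(\tilde{Y})$, where the $G$-action comes from the right $G$-action on $\tilde{Y}$ acting on sections by pullback (and thus on the left on the abelian group of sections).

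To construct a quasi-inverse $\Psi$, I would proceed as follows. Given a left $G$-module $M$, form the constant sheaf $\underline{M}$ on $\tilde{Y}$. Because $\tilde{Y} \to \tilde{X}$ is a Galois $G$-cover, the two projections $\tilde{Y} \times_{\tilde{X}} \tilde{Y} \rightrightarrows \tilde{Y}$ are identified with the projection and the action map $\tilde{Y} \times \underline{G} \rightrightarrows \tilde{Y}$ (this is the torsor condition for $h_{\tilde{Y}}$ from Lemma \ref{lemma:torsorGalois}). Consequently, to give an isomorphism $\pr_1^*\underline{M} \xrightarrow{\sim} \pr_2^*\underline{M}$ on $\tilde{Y} \times_{\tilde{X}} \tilde{Y}$ satisfying the cocycle condition on the triple fibre product is precisely to give a left $G$-action on $M$. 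By effectiveness of descent for sheaves on the Grothendieck topos $\Sh(\Xcet)$ along the covering $\{p\}$, this descent datum produces a sheaf $\Psi(M) \in \Sh(\Xcet)$ together with an isomorphism $p^*\Psi(M) \cong \underline{M}$, so $\Psi(M)$ is $p$-split by construction.

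The remaining step is to verify that $\Phi$ and $\Psi$ are mutually quasi-inverse. That $\Phi \circ \Psi \cong \id$ is immediate since $(\Psi M)(\tilde{Y}) = \underline{M}(\tilde{Y}) = M$ and the action is traced through unchanged. For the other direction, given a $p$-split sheaf $F$ with $p^*F \cong \underline{F(\tilde{Y})}$, the standard descent isomorphism $F \cong \Psi(\Phi(F))$ follows from the fact that $F$ is the equalizer of the two maps $p_*p^*F \rightrightarrows p_*p^*p_*p^*F$, which reconstructs $F$ from $\Phi(F)$ together with its $G$-action.

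The main conceptual obstacle would be setting up descent carefully in the Artin--Verdier topos rather than on an ordinary étale site, but since $\Sh(\Xcet)$ is by construction a Grothendieck topos and $\{p\}$ is a covering in $\Xcet$, descent is automatic; the only site-specific input is the identification of $\tilde{Y} \times_{\tilde{X}} \tilde{Y}$ with $\tilde{Y} \times \underline{G}$, which is precisely the content of the Galois covering condition in Definition \ref{def:galoiscover} (in particular, the freeness of the $G$-action on $Y_\infty$ together with $Y_\infty/G \cong X_\infty$). The rest is a routine translation that I would leave to the reader, as the lemma statement itself suggests.
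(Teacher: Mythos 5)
Your proof is the standard descent argument that the paper itself merely cites (``follows once again from standard descent theory''), so in approach the two coincide, and for $\tilde{Y}$ connected everything you write is correct. One step, however, fails in the generality in which the lemma is stated: identifying a descent datum $\pr_1^*\underline{M}\xrightarrow{\ \sim\ }\pr_2^*\underline{M}$ on $\tilde{Y}\times_{\tilde{X}}\tilde{Y}\cong\tilde{Y}\times\underline{G}$ with a left $G$-action on $M$ presupposes that a global automorphism of $\underline{M}$ over $\tilde{Y}$ is a single element of $\Aut(M)$, i.e.\ that $\tilde{Y}$ is connected; in general such an automorphism is an element of $\Aut(M)^{\pi_0(\tilde{Y})}$. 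For a disconnected Galois cover $\tilde{Y}=\Ind^G_H(\tilde{Z})$ with $H\subsetneq G$ --- which the definition allows and the paper later uses, cf.\ Lemma \ref{lemma:torsorinduced} --- descent data on $\underline{M}$ correspond to $H$-module structures on $M$, and the lemma as stated is then false: for the trivial cover $\tilde{Y}=\Ind^G_{\{e\}}(\tilde{X})$ the $p$-split sheaves are exactly the constant sheaves, which are not equivalent to all left $G$-modules unless $G$ is trivial. This is a defect inherited from the statement rather than introduced by your argument (what the paper's computations actually use is the exact functor from $G$-modules to sheaves in the other direction, which exists for disconnected covers as well), but your write-up should either assume $\tilde{Y}$ connected or flag explicitly where connectedness enters.
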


\begin{proof}
  See \cite[p. 532]{MazurNotes}. 
\end{proof}


\section{The cohomology ring of a number field} \label{sec:cup}
The aim of this section is to compute the cohomology ring $H^*(\tilde{X},\ZZ/n\ZZ)$ for $X = \Spec \OO_K$ the ring of integers of a number field. In \cite{CarlsonSchlankUnramified}, this was done when $K$ is totally imaginary and $n=2$; if $K$ is a number field that is not totally imaginary, or if $n>2$, the methods utilized in that paper have to be altered. To compute the cohomology ring, we note that the fact that $H^i(\tilde{X},\ZZ/n\ZZ) = 0 $ for $i>3$, together with graded commutativity of the cup product, shows that it is enough to calculate $x \cup y$,  where $y \in H^i(\tilde{X},\ZZ/n\ZZ),i=0,1,2$ and $x \in H^1(\tilde{X},\ZZ/n\ZZ)$.  For $i=0$ the result is obvious, so we are reduced to when $i=1,2$. We denote by $c_x$ the map 
  \[
    x \cup -  \colon H^i(\tilde{X},\ZZ/n\ZZ) \rightarrow H^{i+1}(\tilde{X},\ZZ/n\ZZ)\,.
  \]  

First of all, we should be precise with what we mean when we say that we compute $c_x$. Let us note that by Lemma \ref{lemma:torsorGalois2} and Lemma \ref{lemma:torsorinduced}, $x$ can be represented by a Galois covering $\Ind^{\ZZ/n\ZZ}_{\ZZ/d\ZZ}((Y,Y_\infty))$ for $Y = \Spec \OO_L$ the ring of integers of a cyclic extension $L/K$ of degree $d|n$ which is unramified at the finite as well as the infinite places. In other words, $x$ is represented by $L$, together with a choice of generator $\sigma\in \Gal(L/K)$. By Lemma \ref{lemma:morphismdual}, the map $c_x$ is, under Artin--Verdier duality, dual to a map 
  \[
    c_x^\sim \colon \Ext^{3-(i+1)}_{\tilde{X}}(\ZZ/n\ZZ_{\tilde{X}},\phi_*\mathbb{G}_{m,X}) \rightarrow \Ext^{3-i}_{\tilde{X}}(\ZZ/n\ZZ_{\tilde{X}},\phi_* \mathbb{G}_{m,X})\,.
  \] 
We will compute the map $c_x^\sim$ under the identifications of Corollary \ref{cor:valuesofext}.
We start by showing that the map $c_x\colon H^i(\tilde{X},\ZZ/n\ZZ) \rightarrow H^{i+1}(\tilde{X},\ZZ/n\ZZ)$ can be identified with a connecting homomorphism coming from a certain exact sequence of sheaves on $\Xcet$. By Lemma \ref{lemma:torsorGalois2}, we can represent $x \in H^1(\tilde{X},\ZZ/n\ZZ)$ by a Galois covering 
  \[
    p\colon \tilde{Y} \to \tilde{X}\,.
  \]
Since $p$ is finite \'etale, we have by Proposition \ref{prop:adjoint} that $p_*$ is also left adjoint to $p^*$, and the counit 
  \[
    N\colon p_*p^* \ZZ/n\ZZ_{\tilde{X}} \rightarrow \ZZ/n\ZZ_{\tilde{X}}
  \] 
will be called the \emph{norm}. This terminology is bad from the point of view of algebraic geometry where it is usually called \emph{trace} but it agrees well with the number theoretic terminology and hence we will stick with \emph{norm}. The norm $N$ is an epimorphism, which can be seen by pulling back sections along the cover $p\colon Y\to X$.  

This gives us a short exact sequence
  \begin{equation}\label{eq:1}
    0\to \ker N \xrightarrow{u} p_*p^*\ZZ/n\ZZ_{\tilde{X}} \xrightarrow{N} \ZZ/n\ZZ_{\tilde{X}} \to 0\,.
  \end{equation}
Denote by $C_n=\ZZ/n\ZZ$. Let us note that under the equivalence between the category of sheaves split by the morphism $p$ and the category of $C_n$-sets given by Lemma \ref{lemma:locconst}, $p_*p^* \ZZ/n\ZZ_{\tilde{X}}$ corresponds to the left $C_n$-module which is the group ring 
  \[
    \ZZ/n\ZZ[C_n] \cong \ZZ/n\ZZ[e]/(e^n-1)\,.
  \]
One also easily shows that the norm map 
  \[
    N\colon p_*p^* \ZZ/n\ZZ_{\tilde{X}} \rightarrow \ZZ/n\ZZ_{\tilde{X}}
  \] 
corresponds, under the given equivalence of categories, to the augmentation map 
  \[
    \epsilon \colon \ZZ/n\ZZ[C_n] \rightarrow \ZZ/n\ZZ\,.
  \] 
Here $\ZZ/n\ZZ$ has the trivial $C_n$-action, and the augmentation map is the map that takes $g \in C_n$ to $1$. This gives us that $\ker N$ corresponds to $\ker \epsilon$ under the stated equivalence of categories. Thus, exact sequence (\ref{eq:1}) corresponds, under the equivalence in Lemma \ref{lemma:locconst} to the exact sequence
  \[
    0 \rightarrow \ker \epsilon \rightarrow \ZZ/n\ZZ[C_n] \xrightarrow{\epsilon} \ZZ/n\ZZ \rightarrow 0\,.
  \] 
It is easy to see that $\ker \epsilon$, as a $\ZZ/n\ZZ$-module, is free on the elements $g-1$, for $0\neq g \in C_n$. There is a $C_n$-equivariant map
  \[
    s \colon \ker \epsilon \rightarrow \ZZ/n\ZZ
  \] 
taking $e-1$ to $1$. If we let $P$ be the pushout of 
  \[
    \ZZ/n\ZZ[C_n] \leftarrow \ker \epsilon \xrightarrow{s} \ZZ/n\ZZ\,,
  \] 
we have the commutative diagram 
  \[
    \begin{tikzcd}  
      0  \arrow[r] & \ker \epsilon \arrow[d,"s"]  \arrow[r] &  \ZZ/n\ZZ[C_n] \arrow[r,"\epsilon"]  \arrow[d] &  \ZZ/n\ZZ \arrow[d,equals] \arrow[r] & 0  \\ 
      0 \arrow[r] & \ZZ/n\ZZ \arrow[r] & P \arrow[r] & \ZZ/n\ZZ \arrow[r] & 0 \,. 
    \end{tikzcd}
  \] 
The short exact sequence 
  \begin{equation} \label{ses} 
    \begin{tikzcd} 
      0 \arrow[r] & \ZZ/n\ZZ \arrow[r] & P \arrow[r] & \ZZ/n\ZZ \arrow[r] & 0 
    \end{tikzcd}
  \end{equation} 
corresponds under the equivalence given by Lemma \ref{lemma:locconst} to a short exact sequence 
  \begin{equation} \label{ses2} 
    0 \rightarrow \ZZ/n\ZZ_{\tilde{X}} \rightarrow \mathcal{F} \rightarrow \ZZ/n\ZZ_{\tilde{X}} \rightarrow 0
  \end{equation} 
of sheaves on $\Xcet$ and we have a commutative diagram
  \begin{equation} \label{comdiag} 
    \begin{tikzcd}   
      0  \arrow[r] & \ker N \arrow[d,"f"]  \arrow[r,"u"] &  p_*p^*  \ZZ/n\ZZ_{\tilde{X}}  \arrow[r,"N"]  \arrow[d] &   \ZZ/n\ZZ_{\tilde{X}}  \arrow[d,equals] \arrow[r] & 0  \\ 
      0 \arrow[r] & \ \ZZ/n\ZZ_{\tilde{X}} \arrow[r] & \mathcal{F} \arrow[r] &  \ZZ/n\ZZ_{\tilde{X}}  \arrow[r] & 0 \,.
    \end{tikzcd} 
  \end{equation}

\begin{lem} \label{lemma:transfercup}
Let $x \in H^1(\tilde{X},\ZZ/n\ZZ)$ be represented by the $\ZZ/n\ZZ$-torsor $p\colon \tilde{Y} \rightarrow \tilde{X}$. Then the connecting homomorphism $H^i(\tilde{X},\ZZ/n\ZZ) \rightarrow H^{i+1}(\tilde{X},\ZZ/n\ZZ)$ arising from the short exact sequence (\ref{ses2}) is given by cup product with $x \in H^1(\tilde{X},\ZZ/n\ZZ)$.
\end{lem}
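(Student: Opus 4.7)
The plan is to reduce the lemma to showing that the extension class of (\ref{ses2}) in $\Ext^1_{\tilde{X}}(\ZZ/n\ZZ,\ZZ/n\ZZ)$ corresponds to $x$ under the canonical isomorphism $\Ext^1_{\tilde{X}}(\ZZ/n\ZZ,\ZZ/n\ZZ) \cong H^1(\tilde{X},\ZZ/n\ZZ)$. Granted this identification, the lemma follows from the standard homological fact that for any short exact sequence $0 \to A \to B \to C \to 0$ in $\Sh(\Xcet)$ the connecting homomorphism $H^i(\tilde{X},C) \to H^{i+1}(\tilde{X},A)$ is Yoneda composition with the extension class $[B] \in \Ext^1_{\tilde{X}}(C,A)$; in the case $A = C = \ZZ/n\ZZ$ this Yoneda composition agrees with cup product, since $\HOM(\ZZ/n\ZZ,\ZZ/n\ZZ) \cong \ZZ/n\ZZ$ and the ring multiplication on $\ZZ/n\ZZ$ is the identity on generators.

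From the pushout diagram (\ref{comdiag}), the class $[\mathcal{F}]$ is the image of the extension class of (\ref{eq:1}) under the pushforward $f_*\colon \Ext^1_{\tilde{X}}(\ZZ/n\ZZ,\ker N) \to \Ext^1_{\tilde{X}}(\ZZ/n\ZZ,\ZZ/n\ZZ)$. Since every sheaf in sight is split by $p$, Lemma \ref{lemma:locconst} lets me carry out the entire computation inside the category of $\ZZ/n\ZZ[C_n]$-modules. There the sequence (\ref{eq:1}) becomes the canonical $0 \to \ker\epsilon \to \ZZ/n\ZZ[C_n] \to \ZZ/n\ZZ \to 0$ and $f$ becomes $s$, and a short explicit calculation with the standard periodic resolution of $\ZZ/n\ZZ$ over $\ZZ/n\ZZ[C_n]$ shows that the pushforward along $s$ of the class of this sequence is the identity homomorphism under the identification $H^1(C_n,\ZZ/n\ZZ) = \Hom(C_n,\ZZ/n\ZZ)$, where the generator of $C_n$ is the chosen generator of $\Gal(L/K)$.

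It remains to transport this class back to $H^1(\tilde{X},\ZZ/n\ZZ)$. The equivalence of Lemma \ref{lemma:locconst} induces a canonical map $H^1(C_n,\ZZ/n\ZZ) = \Ext^1_{C_n}(\ZZ/n\ZZ,\ZZ/n\ZZ) \to \Ext^1_{\tilde{X}}(\ZZ/n\ZZ,\ZZ/n\ZZ) = H^1(\tilde{X},\ZZ/n\ZZ)$, which I would argue coincides with the edge homomorphism of the Cartan--Leray spectral sequence for $p\colon \tilde{Y} \to \tilde{X}$. The Cartan--Leray edge map is well known to send the identity homomorphism $C_n \to \ZZ/n\ZZ$ to the class classifying the $C_n$-torsor $p$, namely $x$. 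Checking this compatibility between Lemma \ref{lemma:locconst} and the edge homomorphism is the main technical point of the proof; the rest is formal diagram-chasing and a routine group-cohomology computation. Once this compatibility is in hand, combining it with the preceding paragraph gives $[\mathcal{F}] = x$, and the lemma follows.
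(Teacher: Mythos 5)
Your argument is essentially the paper's: both proofs reduce the lemma to showing that the extension class of (\ref{ses2}) in $\Ext^1_{\tilde{X}}(\ZZ/n\ZZ,\ZZ/n\ZZ)\cong H^1(\tilde{X},\ZZ/n\ZZ)$ equals $x$, by recognizing (\ref{ses2}) as obtained from the universal extension of $\ZZ/n\ZZ$ by $\ZZ/n\ZZ$ over $C_n$, and then invoke that the connecting homomorphism of a short exact sequence is Yoneda product with its class, which for $\Ext^1(\ZZ/n\ZZ,\ZZ/n\ZZ)$ is cup product. The only difference is packaging: the paper transports the class from group cohomology to $\Sh(\Xcet)$ via the classifying topos $BC_n$ and the pullback $k_x^*$ of the universal torsor (Mac Lane--Moerdijk), whereas you phrase the same map as the Cartan--Leray edge homomorphism for $p$; the compatibility you flag as the ``main technical point'' is exactly what the naturality of the paper's $\mu$ construction supplies.
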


\begin{proof}
The strategy is the same as in \cite[Lemma 5.1]{CarlsonSchlankUnramified}. Note that under the canonical identification $H^1(\tilde{X},\ZZ/n\ZZ)\cong \Ext^1_{\tilde{X}}(\ZZ/n\ZZ, \ZZ/n\ZZ)$, the cup product corresponds to the Yoneda product. Now let $C_n=\ZZ/n\ZZ$. By \cite[VIII.2, Theorem 7]{MaclaneTopos}, $x$, viewed as a $\ZZ/n\ZZ$-torsor, corresponds to a geometric morphism 
  \[
    k_x \colon \Sh(\Xcet) \rightarrow BC_n
  \] 
(i.e., an adjunction where the left adjoint preserves all finite limits) where $BC_n$ is the topos of $C_n$-sets. In this topos, there is a universal $\ZZ/n\ZZ$-torsor, which we denote $U_{\ZZ/n\ZZ}$. The underlying $C_n$-set of $U_{\ZZ/n\ZZ}$ is just $\ZZ/n\ZZ$ with $C_n$ acting by left translation, and $\ZZ/n\ZZ$ acts on $U_{\ZZ/n\ZZ}$ by right translation. Given a $\ZZ/n\ZZ$-torsor $T$, we define $\mu(T) \in \Ext^1_{C_n}(\ZZ/n\ZZ,\ZZ/n\ZZ)$ as follows. We let $\ZZ/n\ZZ[T]$ be the $C_n$-module whose elements are given by formal sums $\sum_i a_i [t_i]$, $a_i \in \ZZ/n\ZZ$, $t_i \in T$  and where $C_n$ acts in the obvious way. There is a map $\epsilon_{T}  \colon \ZZ/n\ZZ[T] \rightarrow \ZZ/n\ZZ$ given by mapping $\sum_i a_i [t_i] \to \sum_i a_i$. The kernel $\ker \epsilon_T$ is generated by elements of the form $[t_1]-[t_2], t_1,t_2 \in T$ and we define a map $f_T \colon \ker \epsilon \rightarrow \ZZ/n\ZZ$ by mapping $[t_1]-[t_2]$ to the unique $g \in \ZZ/n\ZZ \cong C_n$ such that $gt_2 = t_1$. We then define $\mu(T) \in \Ext^1_{C_n}(\ZZ/n\ZZ,\ZZ/n\ZZ)$ to be the short exact sequence we get by pushout along $f_T$ of the exact sequence
  \[
    0 \rightarrow \ker \epsilon_T \rightarrow \ZZ/n\ZZ[T] \rightarrow \ZZ/n\ZZ \rightarrow 0\,.
  \]
It is well-known that this gives an isomorphism $\Tors_{C_n}(\ZZ/n\ZZ) \rightarrow \Ext^1_{C_n}(\ZZ/n\ZZ,\ZZ/n\ZZ)$. We then see that $\mu(U_{\ZZ/n\ZZ})$ corresponds to a short exact sequence 
  \begin{equation} \label{ses3} 
    0 \rightarrow \ZZ/n\ZZ \rightarrow P \rightarrow \ZZ/n\ZZ \rightarrow 0\,.
  \end{equation}  
If we take cohomology, then the connecting homomorphism $\Ext^i_{C_n}(\ZZ/n\ZZ,\ZZ/n\ZZ) \rightarrow \Ext^{i+1}_{C_n}(\ZZ/n\ZZ,\ZZ/n\ZZ)$ is given by the Yoneda product with $\mu(U_{\ZZ/n\ZZ})$. If we pull-back the short exact sequence (\ref{ses3}) by $k_x^*$ we get the short exact sequence (\ref{ses2}), and our claim now follows. Indeed, the connecting homomorphism from short exact sequence (\ref{ses2}) is given by Yoneda product with the element in $\Ext^1_{\tilde{X}}(\ZZ/n\ZZ_{\tilde{X}},\ZZ/n\ZZ_{\tilde{X}})$ classifying it. Thus the connecting homomorphism is given by cup product with $x$.
\end{proof}

The commutative diagram (\ref{comdiag}) shows that if $\delta_x$ is the connecting homomorphism coming from the upper short exact sequence, then
the diagram  
  \[
    \xymatrix{H^i(\tilde{X},\mathbb{Z}/n\ZZ)\ar[r]^{\delta_x} \ar[d]^= & H^{i+1}(\tilde{X},\ker N)\ar[d]^{f_*} \\ H^i(\tilde{X},\mathbb{Z}/n\ZZ)\ar[r]^{c_x} & H^{i+1}(\tilde{X},\mathbb{Z}/n\ZZ)}
  \] 
commutes. Our plan to compute the cup product is now to first compute the map $\delta_x$, and then to compose with $f_*$. By the above commutative diagram, this agrees with the cup product map. By applying Lemma \ref{lemma:morphismdual} to the map
  \[
    \delta_x\colon \ZZ/n\ZZ_{\tilde{X}} \rightarrow \ker N[1]\,,
  \] 
we see that the map
  \[
    \RGamma(\tilde{X},\delta_x)\colon \RGamma(\tilde{X},\ZZ/n\ZZ) \rightarrow \RGamma(\tilde{X},\ker N[1])
  \]
corresponds under Artin--Verdier duality to
  \[
    \RHom_{\tilde{X}}(\delta_x,\phi_\ast \mathbb{G}_{m,X})\colon \RHom_{\tilde{X}}(\ker N [1],\phi_\ast \mathbb{G}_{m,X})[3] \rightarrow \RHom_{\tilde{X}}(\ZZ/n\ZZ_{\tilde{X}},\phi_\ast \mathbb{G}_{m,X})[3]\,.
  \]  
This shows that $\delta_x$ is Pontryagin dual to the map 
  \[
    \delta_x^\sim\colon \Ext_{\tilde{X}}^{3-(i+1)} (\ker N, \phi_\ast \mathbb{G}_{m,X}) \rightarrow \Ext_{\tilde{X}}^{3-i} (\ZZ/n\ZZ_{\tilde{X}}, \phi_\ast \mathbb{G}_{m,X})
  \] 
which we get by applying $H^{3-i}$ to $\RHom_{\tilde{X}}(\delta_x,\phi_* \mathbb{G}_{m,X})$. In the same way we see that the map $c_x\colon H^i(\tilde{X},\ZZ/n\ZZ) \rightarrow H^{i+1}(\tilde{X},\ZZ/n\ZZ)$ is, under Artin--Verdier duality, Pontryagin dual to the map $c_x^\sim$, which is the composite 
  \[
    \Ext^{3-(i+1)}_{\tilde{X}}(\ZZ/n\ZZ_{\tilde{X}}, \phi_* \mathbb{G}_{m,X}) \xrightarrow{f^*} \Ext^{3-(i+1)}_{\tilde{X}}(\ker N ,\phi_* \mathbb{G}_{m,X}) \xrightarrow{\delta_x^\sim} \Ext^{3-i}_{\tilde{X}}(\ZZ/n\ZZ_{\tilde{X}},\phi_*\mathbb{G}_{m,X}). 
  \]
We will now compute $\delta_x^\sim$ and $c_x^\sim$ by taking resolutions of $\ker N,\ \phi_\ast \mathbb{G}_{m,X}$ and $\ZZ/n\ZZ$.
Since, under the equivalence between locally constant sheaves split by $p$ and $C_n$-sets, $\ker N$ corresponds to $\ker \epsilon$, to resolve $\ker N$, it is enough to find a resolution of $\ker \epsilon$, and that is what we will do.  Let us denote the element $\sum_{g \in C_n} g$ by $\Delta$ and choose a generator $e$ of $C_n$, thus establishing an isomorphism $\ZZ[C_n] \cong \ZZ[e]/(e^n-1)$. The resolution we will use is the following 
  \[
    \mathcal{K} = ( \mathbb{Z} \xrightarrow{\begin{pmatrix}n \\ -\Delta\end{pmatrix}}  \mathbb{Z}\oplus \mathbb{Z}[C_n] \xrightarrow{\begin{pmatrix}\Delta & n\end{pmatrix}} \mathbb{Z}[C_n] )
  \] 
that is, the first map is multiplication by $n$ on the first factor and multiplication by $-\Delta$ on the second factor. The second map is multiplication by $\Delta$ on the first factor and by $n$ on the second factor. The map $\mathcal{K} \rightarrow \ker \epsilon$ taking $1 \in \ZZ[C_n]$ to $e-1$ then exhibits $\mathcal{K}$ as a resolution of $\ker \epsilon$. By the equivalence of categories between $C_n$-sets and locally constant sheaves split by $p$, we get the resolution
  \[
    \mathbb{Z}_{\tilde{X}} \xrightarrow{d_2} \mathbb{Z}_{\tilde{X}} \oplus p_*p^*\mathbb{Z}_{\tilde{X}} \xrightarrow{d_1} p_*p^*\mathbb{Z}_{\tilde{X}} 
  \]  
of $\ker N$. We will by abuse of notation also denote by $\K$, the complex resolving $\ker N$. Let us now resolve $\phi_\ast  \mathbb{G}_{m,X}$ as in Section \ref{subsec:artin}, by the complex $\mathcal{C}$, defined as 
  \[
    \phi_\ast j_\ast \mathbb{G}_{m,K} \xrightarrow{\phi_\ast \divis} \phi_\ast \DIV X \rightarrow 0\,.
  \]
Note that $\phi_\ast \divis$ is an epimorphism since $\divis\colon j_*\mathbb{G}_{m,K}\to \DIV X$ is an epimorphism and $j^*\DIV X = 0$. Let $\mathcal{E}_n$ be the complex 
  \[
    \mathbb{Z}_{\tilde{X}} \xrightarrow{n} \mathbb{Z}_{\tilde{X}} 
  \]  
resolving $\ZZ/n\ZZ_{\tilde{X}}$. We will now show that the maps $u \colon \ker N \rightarrow p_*p^* \ZZ/n\ZZ_{\tilde{X}},\ N \colon p_*p^* \ZZ/n\ZZ_{\tilde{X}} \rightarrow \ZZ/n\ZZ_{\tilde{X}}$, and $f \colon \ker N \rightarrow \ZZ/n\ZZ_{\tilde{X}}$ from commutative diagram (\ref{comdiag}) lift to morphisms of complexes $\hat{u} \colon \mathcal{K} \rightarrow p_*p^* \mathcal{E}_n$, $\widehat{N}\colon p_*p^* \mathcal{E}_n \rightarrow \mathcal{E}_n$ and $\hat{f}\colon \mathcal{K} \rightarrow \mathcal{E}_n$. We will explain how these morphisms are defined for the corresponding $C_n$-sets, using once again the equivalence between locally constant sheaves split by $p$ and $C_n$-sets. The map $\hat{u}$ is defined as:
  \[
    \begin{tikzcd}[column sep = huge, ampersand replacement=\&] 
      \mathbb{Z}  \ar{r}{\begin{pmatrix} n \\ -\Delta \end{pmatrix}} \ar{d} \& \mathbb{Z} \oplus \mathbb{Z}[C_n] \ar{d}{\begin{pmatrix}0 & e-1\end{pmatrix}} \ar{r}{\begin{pmatrix}\Delta & n\end{pmatrix}} \& \ZZ[C_n] \ar{d}{e-1} \\  0 \ar{r} \& \ZZ[C_n] \ar{r}{n} \& \ZZ[C_n],
    \end{tikzcd}
  \]
while the map $\widehat{N}$ is given by
  \[
    \begin{tikzcd}[column sep = huge ] 
      \mathbb{Z}[C_n]  \arrow[r,"n"] \arrow[d,"\epsilon"] & \mathbb{Z}[C_n] \arrow[d,"\epsilon"] \\ \ZZ \arrow[r,"n"] & \ZZ .
    \end{tikzcd}
  \]
Lastly, the map $\hat{f}$ is given in components as 
  \[
    \begin{tikzcd}[column sep = huge, ampersand replacement=\&] 
      \mathbb{Z}\ar{r}{\begin{pmatrix} 
        n \\
        - \Delta
      \end{pmatrix}} \ar{d} \& \mathbb{Z} \oplus \mathbb{Z}[C_n] 
      \ar{d}{\begin{pmatrix}
        1 & \epsilon
      \end{pmatrix}} \ar{r}{\begin{pmatrix}\Delta & n\end{pmatrix}} \& \ZZ[C_n] \ar{d}{\epsilon} \\ 
      0 \ar{r} \& \ZZ \ar{r}{n} \& \ZZ,
    \end{tikzcd}
  \]
Let $\Cone(\hat{u}) = p_*p^* \mathcal{E}_n \oplus \mathcal{K}[1]$ denote the cone of $\hat{u}.$ Since $\widehat{N} \circ \hat{u} = 0$, we have a map $q(\hat{u})  \colon  \Cone(\hat{u}) \xrightarrow{(\widehat{N},0)} \mathcal{E}_n$.

Note that the map $q(\hat{u})\colon \Cone(\hat{u}) \rightarrow \mathcal{E}_n$ is a quasi-isomorphism (see e.g. \cite[III.3.5]{Gelfand--Manin}).
Summarizing our situation, we have the zig-zag \[\mathcal{E}_n \xleftarrow{q(\hat{u})} \Cone(\hat{u})\xrightarrow{\pr_2} \mathcal{K}[1] \] and a map $\hat{f}[1] \colon \mathcal{K}[1] \rightarrow \mathcal{E}_n[1]$. The zig-zag represents $\delta_x \colon \ZZ/n\ZZ_{\tilde{X}} \rightarrow \ker N[1].$ We now apply $\HOM(-,\mathcal{C})$ to this zig-zag and $\hat{f}$ to get the zig-zag
  \begin{equation}\label{eq:zigzag}
    \HOM(\mathcal{E}_n,\mathcal{C}) \xrightarrow{q(\hat{u})^*} \HOM(\Cone(\hat{u}),\mathcal{C}) \xleftarrow{\pr_2^*} \HOM(\mathcal{K}[1],\mathcal{C})
  \end{equation}  
and the map 
  \[
    \hat{f}^* \colon \HOM(\mathcal{E}_n[1],\mathcal{C}) \rightarrow \HOM(\mathcal{K}[1],\mathcal{C})\,.
  \]
The map $q(\hat{u})^*$ is a quasi-isomorphism since $q(\hat{u})$ is a quasi-isomorphism between complexes of locally free sheaves. Applying the global sections functor to the zig-zag (\ref{eq:zigzag}) and using the natural transformation $\Gamma \rightarrow \RGamma$, we have the commutative diagram 
  \[
    \begin{tikzcd}[column sep= huge] 
      \Hom(\mathcal{E}_n,\mathcal{C})  \arrow[r,"{q(\hat{u})^*}"]  \arrow[d,"s"]& \Hom(\Cone(\hat{u}),\mathcal{C})  \arrow[d,"t"] &   \arrow[l,swap,"{\pr_2^*}"] \Hom(\mathcal{K}[1],\mathcal{C}) \arrow[d] \\
      R\Hom(\mathcal{E}_n,\mathcal{C}) \arrow[r,"{\RHom(q(\hat{u}),\mathcal{C})}"] & \RHom(\Cone(\hat{u}),\mathcal{C})  &  \arrow[l,swap,"{\RHom(\pr_2,\mathcal{C})}"]  \RHom(\mathcal{K}[1],\mathcal{C})\,.
    \end{tikzcd}
  \]
We want to prove the following lemma.

\begin{lem} \label{lemma:isoh}
The maps $s,t$ and $q(\hat{u})^*$ induce isomorphisms on $H^i$ for $i=0,1,2$.
\end{lem}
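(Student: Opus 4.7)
The plan is to verify each of the three claims via the hypercohomology spectral sequence, using Proposition~\ref{prop:Bienencoh} as the essential input.

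For the map $s$: The complex $\HOM(\mathcal{E}_n,\mathcal{C})$ has terms $\phi_\ast j_\ast \mathbb{G}_{m,K}$ and $\phi_\ast \DIV X$. By Proposition~\ref{prop:Bienencoh}, both have cohomology concentrated in degrees $0$ and $2$. The hypercohomology spectral sequence
\[
E_1^{p,q} = H^q(\tilde{X}, \HOM(\mathcal{E}_n,\mathcal{C})^p) \Rightarrow H^{p+q}(\RHom(\mathcal{E}_n, \mathcal{C}))
\]
thus has only rows $q=0$ and $q=2$. Since $\HOM(\mathcal{E}_n,\mathcal{C})$ sits in non-negative degrees, the only potentially obstructing term for $i \leq 2$ is $E_\infty^{0,2}$; I show it vanishes by verifying that the $d_1$-differential $E_1^{0,2} \to E_1^{1,2}$ is injective. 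This differential has a component given by the map $\Br_0 K \to \bigoplus_{x} \Br K_x$ induced by $\phi_\ast \divis$; by Hasse--Brauer--Noether (the real-place invariants vanish by the definition of $\Br_0 K$, and complex-place invariants vanish since $\Br \mathbb{C} = 0$) this map is injective. No higher-page differentials interact with position $(i,0)$ in the range $i \leq 2$, giving $E_\infty^{i,0} = E_2^{i,0} = H^i(\Hom(\mathcal{E}_n, \mathcal{C}))$ for $i \leq 2$. This is essentially the argument of \cite[Lemma~4.2]{CarlsonSchlankUnramified} in the generality we need.

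For the map $t$: The same strategy applies with $\HOM(\Cone(\hat{u}), \mathcal{C})$ in place of $\HOM(\mathcal{E}_n, \mathcal{C})$. By Lemma~\ref{lemma:locconst} the terms of $\Cone(\hat{u})$ are direct sums of $\ZZ_{\tilde{X}}$ and $p_\ast p^\ast \ZZ_{\tilde{X}}$; using $p_\ast = p_!$ (since $p$ is finite étale) and the associated adjunction, the components of $\HOM(\Cone(\hat{u}), \mathcal{C})$ are direct sums of $\mathcal{C}^r$ and $p_\ast p^\ast \mathcal{C}^r$. For terms of the latter kind, $H^q(\tilde{X}, p_\ast p^\ast \mathcal{C}^r) = H^q(\tilde{Y}, p^\ast \mathcal{C}^r)$; identifying $p^\ast \phi_\ast j_\ast \mathbb{G}_{m,K}$ and $p^\ast \phi_\ast \DIV X$ with the sheaves $\phi_\ast j_\ast \mathbb{G}_{m,L}$ and $\phi_\ast \DIV Y$ on $\tilde{Y}$ (using Remark~\ref{rk:notconnected} if $Y$ is disconnected), Proposition~\ref{prop:Bienencoh} applied to $\tilde{Y}$ gives the same vanishing in $q=1$ and $q>2$. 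Since $\HOM(\Cone(\hat{u}), \mathcal{C})$ is again concentrated in non-negative degrees, the only obstructing term is $E_\infty^{0,2}$, whose vanishing follows from the injectivity of the local-invariant map $\Br_0 L \to \bigoplus_{y} \Br L_y$, i.e.\ Hasse--Brauer--Noether over $L$.

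For $\hat{q}(u)^\ast$: Since $\hat{q}(u)\colon \Cone(\hat{u}) \to \mathcal{E}_n$ is a quasi-isomorphism, the map $\RHom(\hat{q}(u), \mathcal{C})$ is a quasi-isomorphism, hence an isomorphism on all $H^i$. The commutativity of the left square in the diagram of the lemma together with the isomorphism statements already established for $s$ and $t$ then forces $\hat{q}(u)^\ast$ to induce isomorphisms on $H^i$ for $i=0,1,2$.

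The main technical obstacle is the spectral-sequence bookkeeping for $\Cone(\hat{u})$: while the shape of the argument mirrors the $\mathcal{E}_n$ case, one must carefully identify the components of $\HOM(\Cone(\hat{u}), \mathcal{C})$ through the adjunction for $p$, track that the $q=2$ row still has the local-invariant map in the relevant coordinate of $d_1$, and then invoke Hasse--Brauer--Noether on the $L$-side to conclude that $E_\infty^{0,2} = 0$.
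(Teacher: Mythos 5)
Your proposal is correct and follows essentially the same route as the paper: run the hypercohomology spectral sequence, use Proposition \ref{prop:Bienencoh} (over $K$ for $s$, over $L$ for $t$) to see only the rows $q=0,2$ survive, kill $E_\infty^{0,2}$ by injectivity of the local-invariant map on $\Br_0$ via Albert--Brauer--Hasse--Noether, and then get $\hat{q}(u)^*$ by two-out-of-three from the quasi-isomorphism $\RHom(\hat{q}(u),\mathcal{C})$. The only difference is cosmetic: you make explicit the adjunction identification of the terms of $\HOM(\Cone(\hat{u}),\mathcal{C})$ with $p_*p^*\mathcal{C}^r$, which the paper leaves implicit in its explicit description of that complex.
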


Before proving this lemma, we write out the complexes and the maps appearing in the zig-zag   
  \[
    \Hom(\mathcal{E}_n,\mathcal{C}) \xrightarrow{q(\hat{u})^*} \Hom(\Cone(\hat{u}), \mathcal{C}) \xleftarrow{\pr_2^*} \Hom(\mathcal{K}[1],\mathcal{C}) \xleftarrow{\hat{f}^*} \Hom(\mathcal{E}_n[1],\mathcal{C}) 
  \] 
explicitly. To do this, we first need some notation. Since $p \colon \tilde{Y} \rightarrow \tilde{X}$ is a Galois covering, it can be written as $\Ind^{C_n}_{C_d}(\tilde{Z})$ where 
  \[
    \tilde{Z} = (Z, Z_\infty) = (\Spec \OO_L, (\Spec \OO_L)_\infty)
  \]  
for $L$ an unramified field extension of $K$ of degree $d|n$ with Galois group $C_d$. Note that $\tilde{Y}$ has $n/d$ components. We let $\sigma$ be the fixed generator of $\Gal(L/K)$ corresponding to the element $x\in H^1(X, \ZZ/n\ZZ)$ that we started with and denote by $\sigma'$ a choice of generator of $\Gal(Y/X)$ such that the inclusion $\Gal(L/K)\subset \Gal(Y/X)$ takes $\sigma$ to $\sigma'^{n/d}$. We then get a set of right coset representatives $\{e, \sigma', \ldots ,\sigma'^{n/d-1}\}$ of $\Gal(L/K) \subset \Gal(Y/X)$. Using this set of coset representatives, we fix an isomorphism $\tilde{Y} \cong \amalg_{i=1}^{n/d} \tilde{Z}$. We write 
  \[
    \sigma'-1\colon (L^\times)^{n/d} \rightarrow (L^\times)^{n/d}\quad \mbox{and}\quad \sigma'-1\colon (\Div L)^{n/d} \rightarrow (\Div L)^{n/d}
  \] 
to denote the maps taking $a = (a_1,a_2, \ldots ,a_{n/d}) \in (L^\times)^{n/d}$ to 
  \[
    \sigma'(a)/a := (\sigma(a_{n/d})/a_1,a_1/a_2 \ldots , a_{n/d-1}/a_{n/d})\,,
  \]  
and 
  $
    I = (I_1, \ldots , I_{n/d}) \in \Div(Y)
  $ 
to 
  \[
    \sigma'(I)-I := (\sigma(I_{n/d})-I_1, I_1-I_2, \ldots, I_{n/d-1}-I_{n/d})
  \] 
respectively. There are also norm maps 
  $
    N_{Y|X} \colon (L^\times)^{n/d} \rightarrow K^\times
  $ 
and 
  $
    N_{Y|X} \colon (\Div L)^{n/d} \rightarrow \Div K
  $ 
taking $a = (a_1, \ldots, a_{n/d})  \in (L^\times)^{n/d}$ and $I = (I_1, \ldots, I_{n/d}) \in (\Div L)^{n/d}$ to 
  \[
    N_{Y|X}(a) = \Pi_{j=1}^{n/d} (\Pi_{i=0}^{d-1} \sigma^i (a_j))
    \quad\mbox{and}\quad
    N_{Y|X}(I) = \sum_{j=1}^{n/d} (\sum_{i=0}^{d-1} \sigma^i(I_j))
  \] 
respectively. We also have the obvious diagonal inclusion maps 
  $
    i\colon K^\times \rightarrow (L^\times)^{n/d}
  $ 
and
  $
    i\colon \Div K \rightarrow (\Div L)^{n/d}\,.
  $
Lastly, we have the maps $\divis \colon (L^\times)^{n/d} \rightarrow (\Div L)^{n/d}$ taking a tuple 
  $
    a= (a_1, \ldots, a_n) \in (L^\times)^{n/d}
  $
to 
  \[
    \divis(a) = (\divis(a_1), \ldots, \divis(a_n)) \in (\Div L)^{n/d}\,,
  \] 
where $\divis(a_i)$ is the fractional ideal of $L$ generated by $a_i$. If we have a complex $\mathcal{G}$, we will in the diagram that follows write $\mathcal{G}^*$ to denote $\HOM(\mathcal{G},\mathcal{C})$. All the maps and differentials we have are then collected in the diagram
  \begin{equation*} \label{diag} 
    \begin{tikzcd}[column sep = tiny] 
      K^\times \arrow[r,"{q(\hat{u})^*_0}"] \arrow[d,"d^0_{\mathcal{E}_n^*}"] & (L^\times)^{n/d} \arrow[d,"{d^0_{\Cone(\hat{u})^*}}"] & & \\  
      K^\times \oplus \Div K  \arrow[r,"{q(\hat{u})^*_1}"]  \arrow[d,"d^1_{\mathcal{E}_n^*}"] &  (\Div L)^{n/d} \oplus (L^\times)^{n/d} \oplus (L^\times)^{n/d}  \arrow[d,"{d^1_{\Cone(\hat{u})^*}}"]  &   \arrow[l,"{(\pr_2^*)_1}",swap]  \arrow[d,"{d^1_{\mathcal{K}[1]^*}}"] (L^\times)^{n/d} &\arrow[l,"\hat{f}^*_1",swap]  \arrow[d,"{d^1_{\mathcal{E}_n[1]^*}}"] K^\times \\  
      \Div K  \arrow[r,"{q(\hat{u})^*_2}"] & (\Div L)^{n/d} \oplus (\Div L)^{n/d} \oplus K^\times \oplus (L^\times)^{n/d} \arrow[d,"{d^2_{\Cone(\hat{u})^*}}"]  &   \arrow[l,"{(\pr_2^*)_2}"] \arrow[d,"{d^2_{\mathcal{K}[1]^*}}"]  (\Div L)^{n/d} \oplus K^\times \oplus (L^\times)^{n/d} & \arrow[l,"\hat{f}^*_2",swap] \arrow[d,"{d^2_{\mathcal{E}_n[1]^*}}"]  K^\times \oplus \Div K \\ 
      & \Div K \oplus (\Div L)^{n/d} \oplus K^\times  \arrow[d,"{d^3_{\Cone(\hat{u})^*}}"]  &  \arrow[l,"(\pr_2^*)_3",swap]  \arrow[d,"{d^3_{\mathcal{K}[1]^*}}"]  \Div K \oplus (\Div L)^{n/d} \oplus K^\times &  \arrow[l,"\hat{f}^*_3",swap]  \Div K \\ 
      & \Div K & \arrow[l,"(\pr_2^*)_4",swap] \Div K &
  \end{tikzcd} 
\end{equation*}
Where, if we write the maps in matrix form, the differentials are as follows 
  \[
    d^0_{\mathcal{E}_n^*} =
      \begin{pmatrix} -n \\ 
        \divis \end{pmatrix}\,, \quad
        d^1_{\mathcal{E}_n^*} = 
      \begin{pmatrix} 
        \divis & n 
      \end{pmatrix}\,, \quad
    d^0_{\Cone(\hat{u})^*} = 
      \begin{pmatrix} \divis \\ 
        1-\sigma' \\ 
        -n 
      \end{pmatrix}\,, 
  \]
  \[
    d^1_{\Cone(\hat{u})^*}= 
    \begin{pmatrix} 
      \sigma'-1 & \divis & 0 \\ 
      n & 0 & \divis \\ 
      0 & - N_{Y|X} & 0 \\ 
      0 & -n & \sigma'-1 
    \end{pmatrix}\,, \quad 
    d^2_{\Cone(\hat{u})^*} = 	\begin{pmatrix} 
      N_{Y|X} & 0 & \divis & 0 \\ 
      n & 1-\sigma' & 0 & \divis \\ 
      0 & 0 & n & -N_{Y|X} 
    \end{pmatrix}\,, 
  \]  
  \[
    d^3_{\Cone(\hat{u})^*} = 
    \begin{pmatrix} 
      -n & N_{Y|X} & \divis 
    \end{pmatrix}\,, \quad 
    d^1_{\mathcal{K}[1]^*} = 
    \begin{pmatrix} 
      \divis \\ 
      - N_{Y|X} \\ 
      - n 
    \end{pmatrix}
  \]  
  \[
    d^2_{\mathcal{K}[1]^*} = 
    \begin{pmatrix} 
      N_{Y|X} & \divis & 0 \\ 
      n & 0 & \divis \\ 
      0 & n & -N_{Y|X} 
    \end{pmatrix}\,, \quad
    d^3_{\mathcal{K}[1]^*} =  d^3_{\Cone(\hat{u})^*}
  \] 
and the differentials for $\mathcal{E}_n[1]^*$ are given by $d^i_{\mathcal{E}_n[1]^*} = d^{i-1}_{\mathcal{E}_n^*}$. The maps are given as follows: $q(\hat{u})^*_0(a)= i(a), $ 
  \[
    q(\hat{u})^*_1(a,b) = (i(b),0,i(a))\,,
  \] 
  \[
    q(\hat{u})^*_2(a) = (0,i(a),0,0)\,,
  \] 
  \[
    (\pr_2^*)_1(a) =(0,a,0)\,,
  \] 
  \[
    (\pr_2^*)_2(a,b,c) = (a,0,b,c)\,,
  \] 
$(\pr_2^*)_3$ and $(\pr_2^*)_4$ are equal to the identity map, while $f^*_1 = i$, 
  \[
    f^*_2(a,b) = (i(b),a,i(a))\,,
  \] 
  \[
    f^*_3(a) = (a,i(a),0)\,.
  \]
We now proceed with the proof of Lemma \ref{lemma:isoh}.

\begin{proof}[{Proof of Lemma \ref{lemma:isoh}}]
We start by running the hypercohomology spectral sequence on $\HOM(\Cone(\hat{u}),\mathcal{C}).$ Then we have \[E_1^{p,q}=H^q(\tilde{X}, \HOM^p(C(\hat{u}),\C))\] and the complex $\HOM(C(\hat{u}),\C)$ has the form
  \[
    \begin{tikzcd}
      \pi_*\pi^*j_*\GG_{m,K}\to 
      \begin{matrix}
        \pi_*\pi^*\DIV X \\
        \oplus \\
        \pi_*\pi^*j_*\GG_{m,K} \\
        \oplus \\
        \pi_*\pi^*j_*\GG_{m,K}
      \end{matrix}\to 
      \begin{matrix}
        \pi_*\pi^*\DIV X \\
        \oplus \\
        \pi_*\pi^*\DIV X \\
        \oplus \\
        j_*\GG_{m,K} \\
        \oplus \\
        \pi_*\pi^*j_*\GG_{m,K}
      \end{matrix}\to
      \begin{matrix}
        \DIV X \\
        \oplus \\
        \pi_*\pi^*\DIV X \\
        \oplus \\
        j_*\GG_{m,K}
      \end{matrix}\to
      \DIV X\,.
    \end{tikzcd}
  \]
Since $\pi$ is finite \'etale, the pushforward $\pi_*$ is exact and, using the Leray spectral sequence, we see that $H^n(X, \pi_*F)\cong H^n(Y, F)$ for all \'etale sheaves $F$ on $Y$ and all $n\geq 0$. Furthermore, since $\pi$ is \'etale, we have an isomorphism $\pi^*\DIV X\cong \DIV Y$ and $\pi^*j_*\GG_{m,K}$ is isomorphic a direct sum of sheaves $(j_L)_*\GG_{m,L}$, one for each component of $Y$, where $j_L$ is the inclusion of the generic point on $\Spec \OO_L$. It follows that for every $p$, the sheaf $\HOM^p(C(\hat{u}),\C)$ is a direct sum of sheaves which by Proposition \ref{prop:Bienencoh} only has cohomology in degree 0 and 2. Hence the $E_1$-page can be visualized as
  \[
    \begin{sseq}{0...6}{0...6}
      \ssdropbull
      \ssarrow{1}{0}
      \ssdropbull
      \ssarrow{1}{0}
      \ssdropbull
      \ssarrow{1}{0}
      \ssdropbull
      \ssarrow{1}{0}
      \ssdropbull
      \ssmoveto{0}{2}
      \ssdropbull
      \ssarrow{1}{0}
      \ssdropbull
      \ssarrow{1}{0}
      \ssdropbull
      \ssarrow{1}{0}
      \ssdropbull
      \ssarrow{1}{0}
      \ssdropbull
    \end{sseq}
  \]
Recall that $\tilde{Y} \cong \Ind^{C_n}_{C_d}(\tilde{Z})$ for $Z = \Spec \OO_L$ the ring of integers of an unramified extension $L$ of $K$ and recall that $\Br_0 L$ denotes the classes of $\Br L$ which has zero local invariants at all real places. The differential 
  \[
    d_1^{0,2} \colon E_1^{0,2} = (\Br_0 L)^{n/d} \rightarrow (\bigoplus_{z \in Z} \Br L_z)^{n/d} \oplus (\Br_0 L)^{n/d} \oplus (\Br_0 L)^{n/d}
  \] 
is injective, since the map $(\Br_0 L)^{n/d} \rightarrow (\bigoplus_{z \in Z} \Br L_z)^{n/d}$ coincides with the map $(\inv)^{n/d}$, where $\inv$ is the restriction to $\Br_0 L$ of the invariant map $\Br L \rightarrow \bigoplus_{z} \Br L_z$, where $z$ ranges over all places of $L$. This restriction is injective since the subgroup $\Br_0 L$ has no invariants at the complex points. On the $E_2$-page, we see that no differential can hit $E_2^{p,0}$ for $p=0,1,2$. This shows that 
  \[
    E_2^{p,0}=E_\infty^{p,0} = H^p(\RHom(\Cone(q(\hat{u})),\mathcal{C})) 
  \] 
for $p=0,1,2$. But 
  \[
    E_2^{p,0}=H^p(\Hom(C(\hat{u}),\C))
  \] 
and hence $H^i(t)$ is an isomorphism for $i=0,1,2$. If one uses the hypercohomology spectral sequence on $\HOM(\mathcal{E}_n,\mathcal{C})$, one sees that $H^i(s)$ is an isomorphism for $i=0,1,2$ as well. The claim that $q(\hat{u})^*$ induces an isomorphism now follows. Indeed, we know that $\RHom(q(\hat{u}),\mathcal{C})$ is an isomorphism, so that $H^i(\RHom(q(\hat{u}),\mathcal{C}) \circ s) = H^i(t \circ q(\hat{u})^*)$ is an isomorphism for $i=0,1,2$. Since $H^i(t)$ is an isomorphism in these degrees, the statement follows.
\end{proof}

\begin{cor} \label{cor:concomp}
The map $c_x^\sim \colon \Ext^i_{\tilde{X}}(\ZZ/n\ZZ_{\tilde{X}},\phi_* \mathbb{G}_{m,X}) \rightarrow \Ext^{i+1}_{\tilde{X}}(\ZZ/n\ZZ_{\tilde{X}}, \phi_* \mathbb{G}_{m,X})$ for $i=0,1$ is isomorphic to $H^i(q(\hat{u})^*)^{-1} \circ H^i (\pr_2^*) \circ H^i(\hat{f}^*)$.
\end{cor}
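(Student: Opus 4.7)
The plan is to unravel the formal setup that has been laid out. First, recall from the discussion immediately preceding Lemma \ref{lemma:isoh} that $c_x^\sim$ has already been identified as the composite $\delta_x^\sim \circ f^*$, so the task reduces to computing $\delta_x^\sim$ on $H^i$. The zig-zag $\mathcal{E}_n \xleftarrow{\hat{q}(u)} \Cone(\hat{u}) \xrightarrow{\pr_2} \mathcal{K}[1]$ represents $\delta_x\colon \ZZ/n\ZZ_{\tilde{X}} \to \ker N[1]$ in the derived category of sheaves on $\tilde{X}$, since $\hat{q}(u)$ is a quasi-isomorphism. Applying $\RHOM(-, \phi_\ast \mathbb{G}_{m,X})$ and then $\RGamma(\tilde{X}, -)$ converts the zig-zag into one of derived Hom complexes whose induced maps on $H^i$ compute $\delta_x^\sim$.

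Next, because $\mathcal{C}$ is a resolution of $\phi_\ast \mathbb{G}_{m,X}$ and each of $\mathcal{E}_n$, $\Cone(\hat{u})$, $\mathcal{K}[1]$ is built from locally free sheaves, the complex $\HOM(-, \mathcal{C})$ computes $\RHOM(-, \phi_\ast \mathbb{G}_{m,X})$. Applying ordinary global sections gives back the honest Hom zig-zag \eqref{eq:zigzag}, which sits inside the commutative diagram preceding Lemma \ref{lemma:isoh} that compares $\Gamma$ with $\RGamma$ via the natural transformations $s$ and $t$.

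I would then invoke Lemma \ref{lemma:isoh} directly: in the range $i=0,1,2$, the maps $H^i(s)$, $H^i(t)$ and $H^i(\hat{q}(u)^*)$ are all isomorphisms, so in this range the derived zig-zag (computing $\RHom$) agrees at the level of cohomology with the honest $\Hom$ zig-zag. Consequently $\delta_x^\sim$ on $H^i$ is represented by $H^i(\hat{q}(u)^*)^{-1} \circ H^i(\pr_2^*)$, and precomposing with $H^i(f^*)$ — the map induced by the lift $\hat{f}$ of $f$ from the commutative diagram \eqref{comdiag} — gives the claimed formula for $c_x^\sim$.

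There is no real obstacle here beyond bookkeeping: the corollary is essentially a formal consequence of Lemma \ref{lemma:isoh} together with the fact that the zig-zag represents $\delta_x$. The only subtle point is verifying that the range $i=0,1$ in the statement lines up with the range $i=0,1,2$ provided by Lemma \ref{lemma:isoh} — which it does, since one needs $H^i(\hat{q}(u)^*)$ to be invertible on the target of $H^i(\pr_2^*)$, and the shifts $[1]$ appearing in $\ker N[1]$ and $\mathcal{E}_n[1]$ are already absorbed in the definition of the complexes used to compute the relevant $\Ext$ groups.
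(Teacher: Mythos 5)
Your argument is correct and is essentially the paper's own: the paper derives this corollary directly from the identification $c_x^\sim = \delta_x^\sim\circ f^*$, the fact that the zig-zag $\mathcal{E}_n \leftarrow \Cone(\hat{u}) \rightarrow \mathcal{K}[1]$ represents $\delta_x$, and Lemma \ref{lemma:isoh}, exactly as you do. The only caveat is cosmetic: keep the degree bookkeeping for the shifted complexes ($\mathcal{K}[1]$, $\mathcal{E}_n[1]$) explicit, as you note, so that the stated range $i=0,1$ sits inside the range $i=0,1,2$ where Lemma \ref{lemma:isoh} applies.
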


We will now utilize this corollary to compute the map $c_x^\sim.$ We will use the maps and the notation of the diagram on page \pageref{diag} freely in the proofs that follow. If $x \in H^1(\tilde{X},\ZZ/n\ZZ)$ is represented by a $\ZZ/n\ZZ$-torsor of the form $\Ind_{\ZZ/d\ZZ}^{\ZZ/n\ZZ}((Y,Y_\infty)) \rightarrow X$, for $Y = \Spec \OO_L$ the ring of integers of an unramified cyclic extension $L/K$ of degree $d$, we will say that we identify $x$ with the cyclic extension $L/K$ of degree $d|n$ together with a choice of generator $\sigma\in \Gal(L/K)$.

\begin{lem} \label{lem:cap1}
Let $x\in H^1(\tilde{X},\mathbb{Z}/n)$ and identify $x$ with a pair $(L,\sigma)$, where $L/K$ is a cyclic extension of degree $d|n$ that is unramified at all places, including the infinite ones, and $\sigma\in \Gal(L/K)$ is a generator. Then the morphism 
  \[
    c_x^\sim \colon \Ext^0_{\tilde{X}}(\mathbb{Z}/n\ZZ_{\tilde{X}},\phi_* \mathbb{G}_{m,X})\to\Ext^1_{\tilde{X}}(\mathbb{Z}/n\ZZ_{\tilde{X}},\phi_*\mathbb{G}_{m,X})
  \] 
sends $\xi\in \mu_n(K)\cong  \Ext^0_{\tilde{X}}(\mathbb{Z}/n\ZZ_{\tilde{X}},\phi_* \mathbb{G}_{m,X})$ to 
  \[
    (a,I)\in H^1(\Hom(\mathcal{E}_n,\mathcal{C})) \cong \Ext^1_{\tilde{X}}(\mathbb{Z}/n\ZZ_{\tilde{X}},\phi_* \mathbb{G}_{m,X})
  \] 
where $a \in K^\times$ and $I \in \Div K$ are elements such that $a=b^{-n}$ and $I\OO_L=\divis(b)$ respectively, where $b \in L^\times$ is an element satisfying $\xi^{n/d}=\sigma(b)/b$ in $L^\times$.
\end{lem}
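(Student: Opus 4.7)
The plan is to apply Corollary \ref{cor:concomp} and explicitly chase $\xi$ through the zig-zag of complexes displayed on page \pageref{diag}. Viewing $\xi \in \mu_n(K)$ as a cocycle of $\Hom(\mathcal{E}_n[1],\mathcal{C})$ living in the degree-$1$ slot $K^*$, I first apply $f^*_1$ to obtain $i(\xi) = (\xi, \dots, \xi) \in (L^*)^{n/d}$, and then apply $(\pr_2^*)_1$ to obtain the triple $(0, i(\xi), 0) \in (\Div L)^{n/d} \oplus (L^*)^{n/d} \oplus (L^*)^{n/d}$, sitting as a cocycle in $\Hom(\Cone(\hat{u}),\mathcal{C})$. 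A direct check with the differential $d^1_{\Cone(\hat{u})^*}$ confirms this is a cocycle, using that $\xi$ is a global unit with $\xi^n = 1$ and $N_{L|K}(\xi) = \xi^d$.

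Pulling back to $\Hom(\mathcal{E}_n,\mathcal{C})$ via $\hat{q}(u)^*_1$ is the substantive step. Since $\hat{q}(u)^*_1(a,\mathfrak{b}) = (i(\mathfrak{b}),0,i(a))$ has trivial middle component, the cocycle $(0,i(\xi),0)$ cannot lie in its image directly; one must modify it by a coboundary $d^0_{\Cone(\hat{u})^*}(z)$ for some $z \in (L^*)^{n/d}$. Reading off $(i(I), 0, i(a)) = (0, i(\xi), 0) + d^0_{\Cone(\hat{u})^*}(z)$ componentwise yields
\[ i(I) = \divis(z), \qquad \sigma'(z)/z = i(\xi), \qquad i(a) = z^{-n}, \]
so everything hinges on solving the middle equation in $(L^*)^{n/d}$.

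This is where Hilbert's Theorem 90 enters, and it will be the main obstacle. Writing $z = (z_1,\dots,z_{n/d})$ and using the explicit formula $\sigma'(z) = (\sigma(z_{n/d}), z_1, \dots, z_{n/d-1})$, the middle equation telescopes into the recurrences $z_j = \xi^{-(j-1)} z_1$ together with the single descent condition $\sigma(z_1)/z_1 = \xi^{n/d}$. Since $N_{L|K}(\xi^{n/d}) = \xi^n = 1$, Hilbert 90 supplies $b \in L^*$ with $\sigma(b)/b = \xi^{n/d}$, and taking $z_1 = b$ then solves the whole system. Substituting back, $z^{-n}$ collapses to the constant tuple $i(b^{-n})$ because $\xi^n = 1$, while $\divis(z)$ equals $i(\divis(b))$ because $\xi$ is a unit; the $\sigma$-invariant elements $b^{-n} \in K^*$ and $\divis(b) = I\OO_L$ with $I \in \Div K$ are then precisely the $a$ and $I$ claimed in the statement, and one verifies the cocycle condition $-\divis(a) = nI$ automatically.
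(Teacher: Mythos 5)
Your proposal is correct and follows essentially the same route as the paper's proof: push $\xi$ through $(\pr_2^*)_1\circ f_1^*$ to get $(0,i(\xi),1)$, then correct by the coboundary of the tuple $\underline{b}=(b,\xi^{-1}b,\dots,\xi^{-(n/d-1)}b)$ obtained from Hilbert 90 applied to $\xi^{n/d}$, and descend the Galois-invariant elements $b^{-n}$ and $\divis(b)$ to $K$. The only (cosmetic) difference is that you derive this tuple by solving the recurrence from the middle component of the coboundary equation, whereas the paper writes it down directly and verifies.
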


\begin{proof}
We use Corollary \ref{cor:concomp}. We see that $(\pr_2^*)_1 \circ f^*_1$ takes the element $\xi \in \mu_n(K)$ to $(0,i(\xi),1)$. We want to reduce this element, modulo the image of $d^0_{\Cone(\hat{u})^*}$ so that it is in the image of $q(\hat{u})^*_1$. Note that the norm of $\xi^{n/d} \in L^\times$ is $\xi^n=1$, thus Hilbert's theorem 90 gives us an element $b \in L^\times$ such that 
  \begin{equation}\label{h90} 
    \xi^{n/d} = \sigma(b)/b \,.
  \end{equation} 
Let $\underline{b} = (b,\xi^{-1}b, \xi^{-2}b, \ldots, \xi^{-n/d+1}b) \in (L^\times)^{n/d}$. We reduce $(0,i(\xi),1)$ by the element $d^0_{\Cone(\hat{u})}(\underline{b})= (\divis(\underline{b}),\underline{b}/\sigma'(\underline{b}),\underline{b}^{-n})$ to get the element $(\divis(\underline{b}),1,\underline{b}^{-n})$. Since $\xi$ is a unit, we have 
  \[
    \divis(\underline{b}) = (\divis(b), \divis(b), \ldots, \divis(b)) \in (\Div L)^{n/d}\,,
  \] 
and since $\xi^n=1$, we have $\underline{b}^{-n} = (b^{-n}, \ldots, b^{-n}) \in (L^\times)^{n/d}$. To show that this is in the image of the $q(\hat{u})_1^*$, we note that $\divis(b)$ is invariant under the Galois action. Indeed, taking $\divis$ of the equality (\ref{h90}), we see that $\divis(b) = \sigma(\divis(b))$. This implies that there is a fractional ideal $I$ of $\Div K$ such that $i(I) = \divis(\underline{b})$. Similarly, we see that $b^{-n}$ is invariant under the Galois action, so that there is some element $a \in K^\times$ such that $i(a) = \underline{b}^{-n}$. It is then clear that $q(\hat{u})^*(a,I) = (\divis(\underline{b}),1,\underline{b}^{-n})$, so that our lemma follows.
\end{proof}

\begin{cor} \label{cor:caproots}
Suppose that in Lemma \ref{lem:cap1} the field $K$ contains a primitive $n$th root of unity. Identify $L$ with a Kummer extension $L=K(v^{1/n})$ with $v \in K^\times$ such that $\divis(v) = n \mathfrak{a}$ for some $\mathfrak{a} \in \Div(K)$. Choose a primitive root of unity $\xi$ such that $\xi^{n/d} = \sigma(v^{1/n})/v^{1/n}$. Then  
  \[
    c_x^\sim \colon \Ext^0_{\tilde{X}}(\mathbb{Z}/n\ZZ_{\tilde{X}},\phi_* \mathbb{G}_{m,X})\to\Ext^1_{\tilde{X}}(\mathbb{Z}/n\ZZ_{\tilde{X}},\phi_*\mathbb{G}_{m,X})
  \] 
sends the element $\xi \in \mu_n(K)\cong  \Ext^0_{\tilde{X}}(\mathbb{Z}/n\ZZ_{\tilde{X}},\phi_* \mathbb{G}_{m,X})$ to 
  \[
    (v^{-1},\mathfrak{a})  \in H^1(\Hom(\mathcal{E}_n,\mathcal{C})) \cong \Ext^1_{\tilde{X}}(\mathbb{Z}/n\ZZ_{\tilde{X}},\phi_* \mathbb{G}_{m,X})\,.
  \]
\end{cor}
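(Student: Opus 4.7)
The plan is to derive the corollary from Lemma \ref{lem:cap1} by making a judicious choice of the auxiliary element $b \in L^*$ appearing in that lemma. Specifically, I would take $b = v^{1/n}$, where $v^{1/n}$ denotes the generator of $L$ used to present $L = K(v^{1/n})$ as a Kummer extension. The condition $\xi^{n/d} = \sigma(v^{1/n})/v^{1/n}$ is exactly the hypothesis placed on the generator $\sigma$, so $b = v^{1/n}$ satisfies the Hilbert 90 relation $\xi^{n/d} = \sigma(b)/b$ required by Lemma \ref{lem:cap1}.

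Given this choice, the lemma expresses $c_x^\sim(\xi)$ as $(a,I)$ where $a \in K^*$ lies under $b^{-n} \in (L^*)^{n/d}$ via the diagonal inclusion $i$, and $I \in \Div K$ lies under $\divis(b)$ via the analogous inclusion. The first computation is immediate: $b^{-n} = v^{-1}$ is visibly in $K^*$ since $v \in K^*$, so $a = v^{-1}$. For the second, I would compute $\divis(b)$ in $\Div L$ by applying $\divis$ to the identity $b^n = v$, obtaining $n \cdot \divis(b) = \divis(v)\OO_L = n(\mathfrak{a}\OO_L)$; since $\Div L$ is a free abelian group and hence torsion-free, this forces $\divis(b) = \mathfrak{a}\OO_L = i(\mathfrak{a})$, so $I = \mathfrak{a}$.

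Combining the two observations, Lemma \ref{lem:cap1} gives $c_x^\sim(\xi) = (v^{-1},\mathfrak{a})$, which is the asserted formula. The only nontrivial verification is the divisor identification, which relies solely on the torsion-freeness of $\Div L$; there is no serious obstacle in the argument, since the essential work has already been done in Lemma \ref{lem:cap1} and the corollary is really just an explicit specialization to the Kummer case in which the Hilbert 90 element $b$ can be written down canonically.
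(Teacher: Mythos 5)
Your proof is correct and follows the same route as the paper: specialize Lemma \ref{lem:cap1} to the explicit Hilbert 90 element coming from the Kummer generator, and your verification that $\divis(b)=\mathfrak{a}\OO_L$ via torsion-freeness of $\Div L$ is exactly the needed (and only) check. In fact your choice $b=v^{1/n}$ is the right one — the paper writes $b=v^{-1/n}$, which does not satisfy $\xi^{n/d}=\sigma(b)/b$ for the chosen $\sigma$ and appears to be a sign typo, since the subsequent identifications in the paper's one-line proof are consistent with your choice.
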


\begin{proof}
In the statement of \ref{lem:cap1} we can in this situation choose $b$ to be $v^{1/n}.$ Since clearly $v^{-1}$ lies under $v^{-1/n}$ and $\mathfrak{a}$ lies under $\divis(v^{1/n})$, our Corollary follows.
\end{proof}
Note that since $\xi$ is a primitive $n$th root of unity, Corollary \ref{cor:caproots} determines the map $c_x^\sim$ in full. We now determine the map 
  \[
    c_x^\sim \colon  \Ext^1_{\tilde{X}}(\ZZ/n\ZZ_{\tilde{X}},\phi_* \mathbb{G}_{m,X}) \rightarrow \Ext^{2}_{\tilde{X}}(\ZZ/n\ZZ_{\tilde{X}}, \phi_* \mathbb{G}_{m,X})\,.
  \]

\begin{lem} \label{lem:cap2}
Let $x\in H^1(\tilde{X},\mathbb{Z}/n)$ and identify $x$ with a pair $(L,\sigma)$, where $L/K$ is a cyclic extension of degree $d|n$ that is unramified at all places, including the infinite ones, and $\sigma$ is a generator of $\Gal(L/K)$. Then the morphism 
  \[ 
    c_x^\sim \colon  \Ext^1_{\tilde{X}}(\ZZ/n\ZZ_{\tilde{X}},\phi_* \mathbb{G}_{m,X}) \rightarrow \Ext^{2}_{\tilde{X}}(\ZZ/n\ZZ_{\tilde{X}}, \phi_* \mathbb{G}_{m,X})
  \] 
sends the element $(b, \mathfrak{b}) \in H^1(\Hom(\mathcal{E}_n,\mathcal{C})) \cong \Ext^1_{\tilde{X}}(\mathbb{Z}/n\ZZ_{\tilde{X}},\phi_*\mathbb{G}_{m,X})$ to 
  \[
    \frac{n}{d}N_{L|K}(I)+ \frac{n^2}{2d} \mathfrak{b} \in \Cl K/n \Cl K \cong \Ext^2_{\tilde{X}}(\mathbb{Z}/n\ZZ_{\tilde{X}},\phi_*\mathbb{G}_{m,X})\,.
  \] 
In this formula, $I$ is any fractional ideal of $L$ satisfying the equality 
  \[
    \mathfrak{b}^{n/d}\mathcal{O}_L=I-\sigma(I)+\divis(t)\,,
  \] 
where $t \in L^\times$ satisfies $N_{L|K}(t) = b^{-1}$.
\end{lem}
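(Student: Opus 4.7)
The plan is to apply Corollary~\ref{cor:concomp}, which identifies $c_x^\sim$ on $\Ext^1$ with the zig-zag composite $H^2(\hat{q}(u)^*)^{-1} \circ H^2(\pr_2^*) \circ H^2(f^*)$ read off from the diagram above. I first trace the class $(b, \mathfrak{b}) \in K^* \oplus \Div K$ through $f_2^*$ and $(\pr_2^*)_2$, using the explicit formulas for these maps, producing the degree-$2$ element $(i(\mathfrak{b}), 0, b, i(b))$ in $\Hom(\Cone(\hat{u}), \mathcal{C})^2$. The problem then reduces to exhibiting a triple $(\hat{I}, \hat{t}, \hat{\beta}) \in (\Div L)^{n/d} \oplus (L^*)^{n/d} \oplus (L^*)^{n/d}$ together with an element $\mathfrak{c} \in \Div K$ satisfying
$$(i(\mathfrak{b}), 0, b, i(b)) \;=\; \hat{q}(u)^*_2(\mathfrak{c}) + d^1_{\Cone(\hat{u})^*}(\hat{I}, \hat{t}, \hat{\beta}),$$
so that the resulting class $[\mathfrak{c}] \in \Cl K /n$ represents $c_x^\sim(b,\mathfrak{b})$.

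To produce such a triple I set $\hat{t} = (t, 1, \ldots, 1)$ where $t \in L^*$ is the element from the lemma with $N_{L|K}(t) = b^{-1}$; this automatically satisfies the third-coordinate equation $N_{Y|X}(\hat{t}) = b^{-1}$. The first-coordinate equation $(\sigma' - 1)\hat{I} + \divis(\hat{t}) = i(\mathfrak{b})$ is solved componentwise: its last $n/d - 1$ components force $\hat{I}_j = \hat{I}_1 - (j-1)\mathfrak{b}\OO_L$, reducing the first component to $(1-\sigma)\hat{I}_1 = \divis(t) - (n/d)\mathfrak{b}\OO_L$. Since the right-hand side has vanishing $N_{L|K}$, Hilbert's Theorem~$90$ for the $C_d$-permutation module $\Div L$ provides a solution, and I take $\hat{I}_1 = -I$ with $I$ as in the lemma. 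An identical componentwise analysis of the fourth coordinate yields $\hat{\beta}_j = \hat{\beta}_1 - (j-1)b$ and the single equation $(\sigma - 1)\hat{\beta}_1 = nt + (n/d)b$; multiplicative Hilbert~$90$ applies because $N_{L|K}(nt + (n/d)b) = -nb + nb = 0$. Reading off the second coordinate then gives $i(\mathfrak{c}) = -n\hat{I} - \divis(\hat{\beta})$, and the unramified assumption on $L/K$ (so that $(\Div L)^\sigma = i(\Div K)$) pins down a unique $\mathfrak{c} \in \Div K$ with $\mathfrak{c}\OO_L = nI - \divis(\hat{\beta}_1)$.

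The non-formal step, in which the combinatorial coefficient appears, is the rewriting
$$nI \;=\; (n/d)\,N_{L|K}(I)\OO_L + (n/d)\sum_{i=0}^{d-1}(I - \sigma^i I),$$
where the telescoping $I - \sigma^i I = \sum_{j=0}^{i-1}\sigma^j\bigl((n/d)\mathfrak{b}\OO_L - \divis(t)\bigr)$ combined with a direct count produces
$$\mathfrak{c}\OO_L \;=\; (n/d)\,N_{L|K}(I)\OO_L + \tfrac{n^2(d-1)}{2d}\,\mathfrak{b}\OO_L - \divis\bigl(s^{n/d}\hat{\beta}_1\bigr),$$
where $s := \prod_{j=0}^{d-2}\sigma^j(t)^{d-1-j} \in L^*$. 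The crucial verification is that $s^{n/d}\hat{\beta}_1$ lies in $K^*$: a short computation using the identity $(\sigma-1)s = N_{L|K}(t) - dt = -b - dt$ together with the chosen relation $(\sigma-1)\hat{\beta}_1 = nt + (n/d)b$ gives $(\sigma-1)\bigl((n/d)s + \hat{\beta}_1\bigr) = 0$. Descending via $i\colon \Div K \hookrightarrow \Div L$ then delivers $\mathfrak{c} \equiv (n/d)\,N_{L|K}(I) + \tfrac{n^2(d-1)}{2d}\mathfrak{b} \pmod{\divis K^*}$ in $\Div K$.

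To match the form in the statement I finally observe that $\tfrac{n^2(d-1)}{2d}\mathfrak{b} \equiv \tfrac{n^2}{2d}\mathfrak{b}$ in $\Cl K /n$: the difference is $\tfrac{n^2}{2}\mathfrak{b} - \tfrac{n^2}{d}\mathfrak{b}$, and $\tfrac{n^2}{d}\mathfrak{b} = n(n/d)\mathfrak{b}$ is already in $n\Div K$, while $\tfrac{n^2}{2}\mathfrak{b}$ is $\tfrac{n}{2}$ times $n\mathfrak{b} = -\divis(b) \in \divis K^*$ (with the odd-$n$ case handled by the invertibility of $2$ modulo $n$, which forces $n^2/2 \equiv 0 \pmod n$). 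The main obstacle in carrying this out is the telescoping identity together with the verification that $s^{n/d}\hat{\beta}_1 \in K^*$; that step is where the prefactor $\tfrac{n^2}{2d}$ emerges, while the rest of the argument is formal once Corollary~\ref{cor:concomp} and the matrices of the cochain diagram are in place.
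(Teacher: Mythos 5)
Your proposal is correct and follows essentially the same route as the paper: both apply Corollary~\ref{cor:concomp}, push $(b,\mathfrak{b})$ to $(i(\mathfrak{b}),0,b,i(b))$, and kill the first, third and fourth coordinates by an explicit coboundary built from $I$, $t$ and a Hilbert~90 element (your $\hat\beta_1$ is the inverse of the paper's explicit $w=t^n\sigma(t^{n-n/d})\cdots\sigma^{d-1}(t^{n/d})$, and your telescoping count $\sum_{j=0}^{d-2}(d-1-j)=d(d-1)/2$ is exactly where the paper's computation of $\divis(w)$ produces the same $\tfrac{n^2}{2d}$ coefficient). The only substantive omission is that you take the existence of $I$ and $t$ with $\mathfrak{b}^{n/d}\OO_L=I-\sigma(I)+\divis(t)$ and $N_{L|K}(t)=b^{-1}$ for granted; the paper spends a nontrivial part of the proof establishing this, using Furtw\"angler's principal ideal theorem for the capitulation of $\mathfrak{b}^{n/d}$, Hasse's norm theorem to correct the unit discrepancy in $N_{L|K}(a)=b^{-1}u$, and Hilbert~90 for ideals --- without this the lemma's formula is vacuous, so you should either supply that argument or state explicitly that existence is part of the hypothesis.
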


\begin{proof}
Once again, we use Corollary \ref{cor:concomp}. We see that $(\pr_2^*) \circ f^*_2$ takes the element 
  \[
    (b,\mathfrak{b})\in K^\times\oplus \Div K
  \] 
to 
  \[
    (i(\mathfrak{b}),0,b,i(b)) \in (\Div L)^{n/d} \oplus (\Div L)^{n/d} \oplus K^\times \oplus (L^\times)^{n/d}\,.
  \] 
We want to reduce this element modulo the image of $d^1_{\Cone(\hat{u})}$ to get an element of the form $(0,i(J),1,1)$ for some ideal $J \in \Div K$, since $\im(q(\hat{u})_2) = (0,i(\Div(K)),1,1)$. Let $\sigma$ be the generator of $\Gal(L/K)$ corresponding to $x$. Since $\mathfrak{b}^{n/d}\mathcal{O}_L$ is in the kernel of the map $N_{L|K} \colon \Cl L \rightarrow \Cl K$, Furtwängler's theorem \cite[IV, Theorem 1]{LemmermeyerPrincipal} gives us a fractional ideal $\mathfrak{b}' \in \Div L$ and an element $a \in L^\times$ such that 
  \begin{equation}\label{eq:furt}
    \mathfrak{b}^{n/d}\mathcal{O}_L=\mathfrak{b}'-\sigma(\mathfrak{b}')+\divis(a)\,.
  \end{equation}  
Note that $\divis(N_{L|K}(a)) = n\mathfrak{b} = \divis(b^{-1})$, so that $N_{L|K}(a) = b^{-1}u$ for some unit $u \in K^\times$. Since units are always norms in unramified extensions of local fields, Hasse's norm theorem \cite{HasseNorm} implies that there is a $v \in L^\times$ such that $N_{L|K}(v) =u^{-1}$. Now, since $N_{L|K}(\divis(v))$ is the unit ideal, Hilbert's theorem 90 for ideals (see e.g. \cite[Proposition 2.1.1]{Bembom}) implies that there is an ideal $J \in \Div L$ such that $\divis(v) = J-\sigma(J)$. Set $I = \mathfrak{b}'-J$ and $t=av$. Then we see that 
  \[
    \mathfrak{b}^{n/d}\mathcal{O}_L = (\mathfrak{b}'-J)-\sigma(\mathfrak{b'}-J)+\divis(av) = I-\sigma(I)+\divis(t)\,.
  \]  
Set 
  \[
    \underline{I} = (I,\mathfrak{b}+I,2\mathfrak{b}+I, \ldots, (\dfrac{n}{d}-1) \mathfrak{b}+I) \in (\Div L)^{n/d}
  \] 
and $\underline{t} = (t,1, \ldots , 1)$. Denote by $\divis(\underline{t})$ and $\underline{t}^n$ respectively the element obtained by applying the operators $\divis$ and $(-)^n$ component-wise and recall the definition of $\sigma'-1$ after Lemma \ref{lemma:isoh}. Then 
  \[
    (\sigma'-1)\underline{I} = (\frac{n}{d}\mathfrak{b}+(\sigma-1)I-\mathfrak{b}, -\mathfrak{b}, \dots, -\mathfrak{b})\,.  
  \]
If we reduce $(i(\mathfrak{b}),0,b,i(b))$ modulo the element 
  \[
    d^1_{\Cone(q(\hat{u}))^*}(\underline{I},\underline{t}^{-1},1) = ((\sigma'-1)(\underline{I})-\divis(\underline{t}),n\underline{I},b^{-1},\underline{t}^{n})
  \] 
we get the element 
  \[
    (0,n\underline{I},1,\underline{t}^ni(b))\,.
  \] 
Now, $t^nb^{n/d}$ is in the kernel of $N_{L|K}\colon L^\times \rightarrow K^\times$, so by Hilbert's theorem 90 there is some element $w \in L^\times$ such that $w/\sigma(w) = t^{n}b^{n/d}$. We claim that we can choose the element $w$ as 
  \[
    t^n \sigma(t^{n-\frac{n}{d}}) \sigma^2(t^{n-\frac{2n}{d}}) \cdots \sigma^{d-1}(t^{n-\frac{n(d-1)}{d}})\,.
  \] 
Indeed, we have
  \[
    \begin{split} 
      w/\sigma(w) & = \dfrac{t^n \sigma(t^{n-\frac{n}{d}}) \sigma^2(t^{n-\frac{2n}{d}}) \cdots \sigma^{d-1}(t^{n-\frac{n(d-1)}{d}})}{\sigma(t^n \sigma(t^{n-\frac{n}{d}}) \sigma^2(t^{n-\frac{2n}{d}}) \cdots \sigma^{d-1}(t^{n-\frac{n(d-1)}{d}}))} \\ 
      & = \dfrac{t^n \sigma(t^{n-\frac{n}{d}}) \sigma^2(t^{n-\frac{2n}{d}}) \cdots \sigma^{d-1}(t^{n-\frac{n(d-1)}{d}})}{\sigma(t^n) \sigma^2(t^{n-\frac{n}{d}}) \sigma^3(t^{n-\frac{2n}{d}}) \cdots \sigma^{d-1}(t^{n-\frac{n(d-2)}{d}}) \cdot t^{n-\frac{n(d-1)}{d}}} \\ 
      & = \dfrac{t^n}{N_{L|K}(t)^{n/d}} \\ 
      & = t^nb^{n/d}\,.
    \end{split}
  \] 
Set $\underline{w} = (w,bw,b^2w, \cdots ,b^{n/d-1}w).$
We now reduce the element $(0,n\underline{I},1,\underline{t}^ni(b))$ modulo 
  \[
    d^1_{\Cone(q(\hat{u}))^*}(0,0,\underline{w}) = (0,\divis(\underline{w}),0,\underline{t}^{-n}i(b)^{-1})
  \] 
and get the element $(0,n\underline{I}+\divis(\underline{w}),1,1)$. Note that 
  \[
    n \underline{I} + \divis(\underline{w}) = (nI + \divis(w),nI + \divis(w) , \ldots, nI+\divis(w))\,.
  \] 
The equality $\divis(t) = \mathfrak{b}^{n/d}-I+\sigma(I)$ shows that
  \[
    \begin{split}
      \divis(w) & = n((\mathfrak{b}^{n/d})-I+\sigma(I)) +(n-\frac{n}{d}) (\mathfrak{b}^{n/d}-\sigma(I)+\sigma^2(I)) + \cdots +\frac{n}{d}(\mathfrak{b}^{n/d}-\sigma^{d-1}(I)+I) \\ 
      & = \frac{n(d+1)}{2}\mathfrak{b}^{n/d} -(n-\frac{n}{d})I +\frac{n}{d}( \sigma(I)+ \sigma^2(I) + \cdots + \sigma^{d-1}(I) )\,,
    \end{split}
  \]  
and hence 
  \[
    (0,n\underline{I}+\divis(w),0,0) = (0,i(\frac{n(d+1)}{2}\mathfrak{b}^{n/d}+N_{L|K}(I)^{n/d}),0,0)\,.
  \]   
The proposition now follows since 
  \[
    \frac{n(d+1)}{2}\mathfrak{b}^{n/d} + N_{L|K}(I)^{n/d} = \frac{n^2}{2d}\mathfrak{b}+\dfrac{n}{d}N_{L|K}(I)
  \] 
in $\Cl K /n \Cl K.$
\end{proof}

\begin{rk}\label{rk:sharifi}
Let $K$ be a number field containing the group of $n$th roots of unity and let $S$ be a finite set of places containing all the places lying over $n$ as well as the real archimedean places. Let us denote by $G_{K,S}$ the Galois group of the maximal extension of $K$ unramified outside of $S$ and denote by $K_S$ the maximal extension of $K$ unramified outside of $S$. Let us note that if we set $D_K = (K_S^\times)^n \cap K^\times$, we have the isomorphism $H^1(G_{K,S},\mu_n) \cong D_K/(K^\times)^n$.  In \cite{Sharificup}, McCallum--Sharifi compute the cup product
  \[
    (-,-)_S\colon D_K \otimes D_K \cong H^1(G_{K,S},\mu_n) \otimes H^1(G_{K,S},\mu_n) \rightarrow H^2(G_{K,S},\mu_n^{\otimes 2})\,.
  \] 
If $a,b \in D_K$ are elements such that all local Hilbert pairings $(a,b)_v$ with $v \in S$ are trivial, one can identify the element $(a,b)_S \in H^2(G_{K,S},\mu_n^{\otimes 2})$ with an element of $(\Cl K_S/n \Cl K_S) \otimes \mu_n $. Under the assumptions that these local pairings vanish, \cite[Theorem 2.4]{Sharificup} gives a formula for $(a,b)_S$ which is similar to the formula in Lemma \ref{lem:cap2}. We now describe the formula for $(a,b)_S$ obtained by McCallum--Sharifi. Note that there is some $\alpha \in K^\times_S$ such that $\alpha^n = a.$ Let $L = K(\alpha)$, and $d = [L:K]$. Let $\mathfrak{b} \in \Div K_S$ be such that $\divis(b)\mathcal{O}_{K,S} = \mathfrak{b}^n$. Write $b = N_{L|K} \gamma$ for some $\gamma$ and $\divis(\gamma) \OO_{L,S} = I-\sigma(I)+b^{n/d}$ for some $I \in \Div L_S$ and $\sigma \in \Gal(L/K)$. Let $\xi \in \mu_n$ be such that $\sigma (\alpha) = \xi \alpha$. Then 
  \[
    (a,b)_S = (N_{L|K}(I) + \frac{n}{2}\mathfrak{b}) \otimes \xi \in \Cl K_S \otimes \mu_n\,.
  \]
\end{rk}

\begin{rk}\label{rk:units}
If $x \in H^1(\tilde{X},\ZZ/n\ZZ)$ is represented by an extension $L\supset K$ whose ring of integers contains a unit $t$ of norm $\xi$ for every unit $\xi\in \mathcal{O}_K^\times$, then the image of $(b,\mathfrak{b})$ only depends on $\mathfrak{b}$ and not on $b$. Indeed, $\mathfrak{b}$ determines $b$ up to a unit and if $\xi$ is a unit in $\mathcal{O}_K$, then for the element $(\xi^{-1},(1))$ we may choose $I=(1)$ and $t$ a unit of norm $\xi$ in Lemma \ref{lem:cap2}. For example, this holds if $\mu(K)=\{1,-1\}$ and $d$ is odd.
\end{rk}

\begin{rk}
Let $n$ be an odd positive integer and let $K$ be a number field. Then 2 is invertible in $\ZZ/n\ZZ$ and hence the formula above becomes \[2^{-1}\frac{n}{d}n\mathfrak{b}+\frac{n}{d}N_{L|K}(I)=\frac{n}{d}N_{L|K}(I)\] since $n\mathfrak{b}=0$ mod $n$.
\end{rk}

\begin{rk}
Let $K$ be a number field with class group $\ZZ/p^n\ZZ$ for some prime $p$ and some positive integer $n\geq 1$. Let $x \in H^1(\tilde{X},\ZZ/p^n\ZZ)$ be represented by a cyclic extension $L/K$ of degree $d|p^n$, unramified at all places (including the infinite ones), together with a choice of generator $\sigma \in \Gal(L/K)$. Let $\mathfrak{b}$ be an ideal in $K$. We claim that $\mathfrak{b}^{p^n/d}\mathcal{O}_L$ is principal. 
Indeed, $\mathfrak{b}^{p^n/d}\mathcal{O}_L$ is $d$-torsion and, by class field theory, $L$ is the unique subextension of degree $d$ of the Hilbert class field of $K$ and every fractional ideal representing a $d$-torsion class in $\Cl(K)$ becomes principal once extended to $L$. 
Hence, if we pick $t$ to be a generator of $\mathfrak{b}^{p^n/d}\mathcal{O}_L$, we get that \[c_x^\sim(N_{L|K}(t)^{-1},\mathfrak{b})=\frac{(p^n)^2}{2d}\mathfrak{b}\,.\]
In particular, if $p$ is an odd prime and $\mu(K)=\{1,-1\}$, then Remark \ref{rk:units} shows that \[c_x^\sim(b,\mathfrak{b})=0\,.\]
\end{rk}

We are now finally in a position to compute the cup product map 
  \[
    c_x\colon H^i(\tilde{X},\ZZ/n\ZZ) \rightarrow H^{i+1}(\tilde{X},\ZZ/n\ZZ)\,.
  \] 
From now on, given $z \in \Ext^{3-i}_{\tilde{X}}(\ZZ/n\ZZ_{\tilde{X}},\phi_* \mathbb{G}_{m,X})$ and $y \in \Ext^{3-i}_{\tilde{X}}(\ZZ/n\ZZ_{\tilde{X}},\phi_* \mathbb{G}_{m,X})^\sim$, where $^\sim$ is the Pontryagin dual, we let $\langle y,z\rangle$ be the evaluation of $y$ on $z$.

\begin{prop}\label{prop:cup1}
Let $X = \Spec \OO_K$ be the ring of integers of a number field $K$ and identify $H^2(\tilde{X},\ZZ/n\ZZ)$ and $H^3(\tilde{X},\ZZ/n\ZZ)$ with $\Ext^1_{\tilde{X}}(\ZZ/n\ZZ_{\tilde{X}},\phi_* \mathbb{G}_{m,X})^\sim$ and $\mu_n(K)^\sim$ respectively, where $^\sim$ denotes the Pontryagin dual. Let $x \in H^1(\tilde{X},\ZZ/n\ZZ)$ be represented by a pair $(L,\sigma)$, where $L/K$ is a cyclic extension of degree $d|n$, unramified at all places (including the infinite ones), and $\sigma \in \Gal(L/K)$ is a generator. Let $\xi \in \mu_n(K)$ and choose $b \in L^\times$ such that $\xi^{n/d} = \sigma(b)/b$, and let $a \in K^\times$ and $\mathfrak{a} \in \Div K$ be such that $\mathfrak{a}\OO_L = \divis(b)$ and $a = b^{-n}$ in $L^\times$. If $y \in H^2(\tilde{X},\ZZ/n\ZZ)$, then 
  \[
    \langle x \cup y,\xi\rangle =  \langle y,(a,\mathfrak{a})\rangle\,.
  \]
\end{prop}

\begin{proof}
This follows at once from Lemma \ref{lem:cap1}.
\end{proof}

\begin{cor}\label{cor:cup1nonzero}
Let $X = \Spec \OO_K$ be the ring of integers of a number field $K$ containing all $n$th roots of unity, and identify $H^2(\tilde{X},\ZZ/n\ZZ)$ and $H^3(\tilde{X},\ZZ/n\ZZ)$ with $\Ext^1_{\tilde{X}}(\ZZ/n\ZZ_{\tilde{X}},\phi_* \mathbb{G}_{m,X})^\sim$  and $\mu_n(K)^\sim$ respectively, where $^\sim$ denotes the Pontryagin dual. Let $x \in H^1(\tilde{X},\ZZ/n\ZZ)$ be represented by a pair $(L,\sigma)$, where $L=K(v^{1/n})$ Kummer extension of degree $d = [L:K]$, with $v \in K^\times$ such that $\divis(v) = n \mathfrak{a}$ for some $\mathfrak{a}$ in $\Div(K)$, and $\sigma\in \Gal(L/K)$ is a generator. If $y \in H^2(\tilde{X},\ZZ/n\ZZ)$, then $x \cup y \neq 0$ if and only if $\langle y,(v^{-1},\mathfrak{a})\rangle \neq 0$.
\end{cor}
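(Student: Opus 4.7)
The plan is to reduce the claim to a direct application of Corollary \ref{cor:caproots}. Since $K$ contains all $n$-th roots of unity, $\mu_n(K)$ is cyclic of order $n$, so under the identification $H^3(\tilde{X},\ZZ/n\ZZ) \cong \mu_n(K)^\sim$ the target of the cup product is cyclic of order $n$ as well. Consequently, an element of $H^3(\tilde{X},\ZZ/n\ZZ)$ vanishes if and only if it pairs trivially with some (equivalently, any) generator of $\mu_n(K)$. I would therefore begin by fixing a primitive $n$-th root of unity $\xi \in K$ and noting that $x \cup y \neq 0$ if and only if $\langle x \cup y, \xi \rangle \neq 0$.

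Next, I would observe that by Kummer theory the generator $\sigma \in \Gal(L/K)$ can be chosen so that $\xi^{n/d} = \sigma(v^{1/n})/v^{1/n}$, which is precisely the setup of Corollary \ref{cor:caproots}. That corollary then identifies the image $c_x^\sim(\xi) \in \Ext^1_{\tilde{X}}(\ZZ/n\ZZ_{\tilde{X}}, \phi_\ast \mathbb{G}_{m,X})$ with the explicit pair $(v^{-1}, \mathfrak{a})$. Combined with the Artin--Verdier duality identity $\langle x \cup y, \xi \rangle = \langle y, c_x^\sim(\xi) \rangle$ (which is the content of Proposition \ref{prop:cup1}), this immediately gives
\[
\langle x \cup y, \xi \rangle = \langle y, (v^{-1}, \mathfrak{a}) \rangle,
\]
and the stated equivalence follows.

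I do not foresee any significant obstacle: the corollary is essentially a repackaging of Corollary \ref{cor:caproots} under the Pontryagin duality, so all the substantive content has been established in the earlier proofs. The one minor point worth checking is that imposing the compatibility $\xi^{n/d} = \sigma(v^{1/n})/v^{1/n}$ between $\sigma$ and $\xi$ does not lose any generality; but this is clear, since whether $x \cup y$ vanishes in $H^3(\tilde{X},\ZZ/n\ZZ)$ is intrinsic to $x$ and $y$ and is independent of any choice of generator of $\Gal(L/K)$.
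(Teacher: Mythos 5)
Your proposal is correct and follows essentially the same route as the paper: fix a primitive $n$th root of unity $\xi$, choose the generator $\sigma$ of $\Gal(L/K)$ compatibly via $\xi^{n/d}=\sigma(v^{1/n})/v^{1/n}$, apply Corollary \ref{cor:caproots} to get $c_x^\sim(\xi)=(v^{-1},\mathfrak{a})$, and conclude since an element of $\mu_n(K)^\sim$ vanishes if and only if it kills the generator $\xi$. No gaps.
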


\begin{proof}
This is a direct consequence of Corollary \ref{cor:caproots}. To see this, let $\xi$ be a primitive $n$th root of unity such that $\sigma(v^{1/n})/v^{1/n} = \xi^{n/d}$. We then see that $c_x(y)(\xi) = \langle y, (v^{-1},\mathfrak{a}) \rangle$. Since $\xi$ generates $\mu_n(K)$, the element $c_x(y) \in \mu_n(K)^\sim$ is zero if and only if $\langle c_x(y),\xi\rangle=0$, so we have our Corollary.
\end{proof}

\begin{prop}\label{prop:cup2}
Let $X = \Spec \OO_K$ be the ring of integers of a number field $K$ and identify $H^2(\tilde{X},\ZZ/n\ZZ)$ with $\Ext^1_{\tilde{X}}(\ZZ/n\ZZ_{\tilde{X}},\phi_* \mathbb{G}_{m,X})^\sim$, where $^\sim$ denotes the Pontryagin dual. Let $x \in H^1(\tilde{X},\ZZ/n\ZZ)$ be represented by a pair $(L,\sigma)$, where $L/K$ is a cyclic extension of degree $d|n$, unramified at all places (including the infinite ones), and $\sigma \in \Gal(L/K)$ is a generator. For an element $y \in H^1(\tilde{X},\ZZ/n\ZZ) \cong (\Cl K/n \Cl K)^\sim$ represented by an unramified cyclic extension $M/K$, we have that 
  \[
    x \cup y \in \Ext^1_{\tilde{X}}(\ZZ/n\ZZ_{\tilde{X}},\phi_*\mathbb{G}_{m,X})^\sim
  \] 
satisfies the formula 
  \[
    \langle x \cup y,(a,\mathfrak{b})\rangle =  \langle y,N_{L|K}(I)^{n/d}+\frac{n^2}{2d} \mathfrak{b}\rangle
  \] 
where $(a,\mathfrak{b}) \in \Ext^1_{\tilde{X}}(\ZZ/n\ZZ_{\tilde{X}},\phi_*\mathbb{G}_{m,X})$ and $I \in \Div L$ is any fractional ideal such that $\mathfrak{b}^{n/d} \mathcal{O}_L =I-\sigma(I) + \divis(t)$ for some $t \in L^\times$ such that $N_{L|K}(t) =a^{-1}$. In particular, $\langle x \cup y,(a,\mathfrak{b})\rangle = 0$ if and only if $\frac{n^2}{2d} \mathfrak{b}+N_{L|K}(I)^{n/d}$ is in the image of $N_{M|K}$.
\end{prop}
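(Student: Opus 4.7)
The plan is to reduce the proposition to Lemma \ref{lem:cap2} together with the Artin--Verdier duality interpretation of the cup product, and then to invoke class field theory for the ``in particular'' clause. Thus the proof itself will be short, since the cochain-level work has already been done in the earlier lemmas.

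First I would observe that, as established in the discussion preceding Corollary \ref{cor:concomp}, the cup product map
\[c_x = x \cup (-) \colon H^1(\tilde{X},\ZZ/n\ZZ) \to H^2(\tilde{X},\ZZ/n\ZZ)\]
is, under the identifications of Theorem \ref{thm:AVduality} and Corollary \ref{cor:valuesofext}, Pontryagin dual to the map
\[c_x^\sim \colon \Ext^1_{\tilde{X}}(\ZZ/n\ZZ_{\tilde{X}},\phi_*\mathbb{G}_{m,X}) \rightarrow \Ext^2_{\tilde{X}}(\ZZ/n\ZZ_{\tilde{X}},\phi_*\mathbb{G}_{m,X}).\]
Consequently, for $y \in H^2(\tilde{X},\ZZ/n\ZZ)$ regarded as a functional on $\Ext^1_{\tilde{X}}(\ZZ/n\ZZ_{\tilde{X}},\phi_*\mathbb{G}_{m,X})$ and any element $(a,\mathfrak{b})$ of this $\Ext$-group, one has $\langle x \cup y, (a,\mathfrak{b})\rangle = \langle y, c_x^\sim((a,\mathfrak{b}))\rangle$.

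Next I would feed $(a,\mathfrak{b})$ into Lemma \ref{lem:cap2}, with the roles of the variables relabeled ($a$ playing the role of $b$ in that lemma). The lemma produces a fractional ideal $I \in \Div L$ and an element $t \in L^*$ satisfying the stated relations $\mathfrak{b}^{n/d}\mathcal{O}_L = I - \sigma(I) + \divis(t)$ and $N_{L|K}(t) = a^{-1}$, and identifies $c_x^\sim((a,\mathfrak{b}))$ with $\frac{n}{d} N_{L|K}(I) + \frac{n^2}{2d} \mathfrak{b}$ in $\Cl K / n\Cl K$. Writing this additively as $N_{L|K}(I)^{n/d} + \frac{n^2}{2d}\mathfrak{b}$ yields the asserted formula $\langle x \cup y, (a,\mathfrak{b})\rangle = \langle y, N_{L|K}(I)^{n/d} + \frac{n^2}{2d}\mathfrak{b}\rangle$.

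For the ``in particular'' assertion, I would invoke class field theory: the identification $H^1(\tilde{X},\ZZ/n\ZZ) \cong (\Cl K / n \Cl K)^\sim$ associates to the unramified cyclic extension $M/K$ the character factoring as
\[\Cl K / n \Cl K \xrightarrow{\Art_{M|K}} \Gal(M/K) \hookrightarrow \ZZ/n\ZZ.\]
Since the second arrow is injective, a class $\mathfrak{c} \in \Cl K / n\Cl K$ pairs trivially with $y$ if and only if $\Art_{M|K}(\mathfrak{c}) = 0$, which by the reciprocity law is equivalent to $\mathfrak{c}$ lying in the image of $N_{M|K}\colon \Cl M \to \Cl K$. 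Applying this with $\mathfrak{c} = N_{L|K}(I)^{n/d} + \frac{n^2}{2d}\mathfrak{b}$ delivers the final equivalence.

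The main obstacle is not in this proposition but was already dispatched in Lemma \ref{lem:cap2}: the explicit cochain-level computation in the complex $\HOM(\mathcal{E}_n,\mathcal{C})$, which required Furtwängler's principal ideal theorem, Hilbert's theorem 90 for ideals, and Hasse's norm theorem to reduce an arbitrary representative of $c_x^\sim((a,\mathfrak{b}))$ into the image of $\hat{q}(u)^*$. Given that machinery in hand, the proposition is essentially an exercise in unwinding duality and class field theory.
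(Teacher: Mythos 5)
Your proposal is correct and follows essentially the same route as the paper: unwind the Artin--Verdier duality to reduce to $c_x^\sim$, quote Lemma \ref{lem:cap2} for the explicit formula, and use the identification of $y$ with a character on $\Cl K/n\Cl K$ whose kernel is $N_{M|K}(\Cl M)$ for the ``in particular'' clause. Nothing further is needed.
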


\begin{proof}
By Artin reciprocity, $y \in H^1(\tilde{X},\ZZ/n\ZZ)$ corresponds to a map 
  \[
    \Cl K /n \Cl K \rightarrow \ZZ/n\ZZ
  \] 
with kernel $N_{M|K}(\Cl M)$. The formula for $x \cup y$ is given by Lemma \ref{lem:cap2}. Indeed, 
  \[
    \langle x \cup y, (a, \mathfrak{b})\rangle = \langle y,c_x^\sim(a, \mathfrak{b})\rangle = \langle y, \frac{n^2}{2d}\mathfrak{b}+N_{L|K}(I)^{n/d}\rangle
  \] 
where $I$ is as in the proposition. The fact that $\langle x \cup y, (a,\mathfrak{b})\rangle=0$ if and only if $\frac{n^2}{2d}\mathfrak{b}+N_{L|K}(I)^{n/d}$ is in the image of $N_{M|K}$ readily follows from this formula and the observation that the kernel of $y\colon \Cl K /n \Cl K \rightarrow \ZZ/n\ZZ$ is $N_{M|K}(\Cl M)$.
\end{proof}

\begin{ex}
The imaginary quadratic fields $\QQ(\sqrt{-231})$ and $\QQ(\sqrt{-255})$ has isomorphic cohomology groups in all degrees. The class groups modulo 2 both have $\FF_2$-rank $2$ and hence in both cases $H^1(X, \ZZ/2\ZZ)$ has rank 2 and $H^2(X, \ZZ/2\ZZ)$ has rank 3 since both fields has a non-trivial 2nd root of unity. Choosing a basis $x_1, x_2$ for $H^1(X, \ZZ/2\ZZ)$ and a basis $\varphi_1, \varphi_2, \varphi_3$ we may compute the matrix 
  \[
    M_K:=\langle x_i\cup x_j, \varphi_k\rangle_{1\leq i\leq j\leq 2, 1\leq k\leq 3}  
  \]
which determines the cup product $H^1(X, \ZZ/2\ZZ)\otimes H^1(X, \ZZ/2\ZZ)\to H^2(X, \ZZ/2\ZZ)$. This was done using the computer program https://github.com/ericahlqvist/cup-products, written in C using the library PARI \cite{PARI2}. The first example $K=\QQ(\sqrt{-231})$ yields a matrix $M_K$ of rank 3 whereas the second example $K=\QQ(\sqrt{-255})$ yields a matrix $M_K$ of rank 2 and hence the cohomology rings cannot be isomorphic. 
\end{ex}


\section{A non-vanishing criterion for Kim's invariant} \label{sec:kim}
In this section we apply the results in Section \ref{sec:cup} to give a criterion for when an interesting invariant associated to number fields defined by Minhyong Kim in \cite{KimArithmetic} vanishes. Following \cite{BleherUnramified}, we call this invariant Kim's invariant. Let us briefly recall how this invariant is defined. Let $K$ be a totally imaginary number field and assume that $K$ contains a primitive $n$th root of unity and set $X= \Spec \OO_K$. By choosing a root of unity $\xi$, fix an isomorphism $\xi \colon \ZZ/n\ZZ \cong \mmu_{n,X}$. Let $\inv \colon H^3(X,\mmu_n) \rightarrow \ZZ/n\ZZ$ be the usual invariant isomorphism obtained via global class field theory. We then have an isomorphism 
  \[
    H^3(X,\ZZ/n\ZZ) \xrightarrow{\xi_*} H^3(X,\mmu_n) \cong \ZZ/n\ZZ
  \] 
which we also will denote by $\inv$. Let now $G$ be a finite group and assume that we have a morphism $f \colon \pi_1(X,x) \rightarrow G$. Fix a class $c \in H^3(G,\ZZ/n\ZZ)$. We then get a class $f^*(c) \in H^3(\pi_1(X,x),\ZZ/n\ZZ)$. We have a map of sites $k\colon\Xet \rightarrow B\pi_1(X,x)$ and if we let 
  \[
    f^*_X = k^* \circ f^*\colon H^3(G,\ZZ/n\ZZ) \rightarrow H^3(X,\ZZ/n\ZZ)\,,
  \] 
we get an element $f^*_X(c) \in H^3(X,\ZZ/n\ZZ)$. Then Kim's invariant associated to $f$ and $c$ is the element 
  \[
    \inv(f^*_X(c)) \in \ZZ/n\ZZ\,. 
  \]
An interesting problem is to compute this invariant for the case when $G= C_n$, and this is what we now will do. We let $c_1 \in H^1(C_n,\ZZ/n\ZZ)$ be the class corresponding to the identity element and let $c_2 = \beta_n(c_1) \in H^2(C_n,\ZZ/n\ZZ)$ be the image of $c_1$ under the Bockstein homomorphism $\beta_n \colon H^1(C_n,\ZZ/n\ZZ) \rightarrow H^2(C_n,\ZZ/n\ZZ)$ arising from the short exact sequence
  \begin{equation}\label{eq:bock} 
    0 \rightarrow \ZZ/n\ZZ \xrightarrow{n} \ZZ/n^2\ZZ \rightarrow \ZZ/n\ZZ \rightarrow 0\,. 
  \end{equation} 
Then it is well-known (see \cite[V,Ex.4.3]{BrownCohomology}) that $H^3(C_n,\ZZ/n\ZZ) \cong \ZZ/n\ZZ$ and that the class $c = c_1 \cup c_2 \in H^3(C_n,\ZZ/n\ZZ)$ is a generator of this group. Thus, given a map $f \colon \pi_1(X,x) \rightarrow C_n$, we see that to determine the Kim invariant for all classes in $H^3(C_n,\ZZ/n\ZZ)$, it is enough to do it for $c$. We will compute Kim's invariant by computing the cup product $f_X^*(c) = f_X^*(c_1) \cup f_X^*(c_2) \in H^3(X,\ZZ/n\ZZ)$ using Lemma \ref{cor:cup1nonzero}. Note that $f_X^*(c_1) \in H^1(X,\ZZ/n\ZZ) \cong (\Cl K/n \Cl K)^\sim$ corresponds to a $\ZZ/n\ZZ$-torsor $p\colon Y \rightarrow X$ of the form $\Ind^{C_n}_{C_d}(\Spec \OO_L)$ for $L$ an unramified Kummer extension $L/K$ of degree $d|n$, say $L= K(v^{1/n})$. We now want to see what element $f_X^*(c_2) \in H^2(X,\ZZ/n\ZZ)$ corresponds to if we identify $H^2(X,\ZZ/n\ZZ)$ with the Pontryagin dual of $\Ext^1_X(\ZZ/n\ZZ,\mathbb{G}_{m,X})$, using Artin--Verdier duality.

\begin{lem} \label{lemma:bock}
Under the isomorphism $H^2(X,\ZZ/n\ZZ) \cong \Ext^1_X(\ZZ/n\ZZ,\mathbb{G}_{m,X})^\sim$,  the element $f_X^*(c_2) \in H^2(X,\ZZ/n\ZZ)$ corresponds to the element $w \in \Ext^1_X(\ZZ/n\ZZ,\mathbb{G}_{m,X})^\sim \cong H^1(\Hom(\mathcal{E}_n,\mathcal{C}))^\sim$, defined by $w(a,\mathfrak{a}) = \Art_{L|K}(\mathfrak{a})$, where $\Art_{L|K}\colon \Cl K/n \Cl K \rightarrow \ZZ/n\ZZ$ is the Artin symbol associated to the unramified extension $L/K$.
\end{lem}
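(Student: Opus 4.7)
The plan is to exploit naturality of the Bockstein. Since $c_2 = \beta_n(c_1)$ and pullbacks commute with $\beta_n$, one has $f_X^*(c_2) = \beta_n(f_X^*(c_1))$ in $H^2(X,\ZZ/n\ZZ)$, so the task reduces to computing the Bockstein on the known class $f_X^*(c_1) \in H^1(X,\ZZ/n\ZZ)$.

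First I would identify $f_X^*(c_1)$ under the Artin--Verdier isomorphism $H^1(X,\ZZ/n\ZZ) \cong (\Cl K/n\Cl K)^\sim$. Since $f_X^*(c_1)$ is classified by the $\ZZ/n\ZZ$-torsor $p\colon Y \to X$ attached to the unramified Kummer extension $L/K$, and the identification of $\ZZ/n\ZZ$-torsors with characters of $\Cl K/n\Cl K$ is given by Artin reciprocity, this character is precisely $\Art_{L|K}$.

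The substance of the proof is then to compute the Pontryagin dual $\beta_n^\sim\colon \Ext^1_X(\ZZ/n\ZZ, \mathbb{G}_{m,X}) \to \Ext^2_X(\ZZ/n\ZZ, \mathbb{G}_{m,X})$ of the Bockstein under Artin--Verdier duality. By Lemma \ref{lemma:morphismdual} this is the connecting homomorphism of the $\Ext$ long exact sequence induced by the defining short exact sequence (\ref{eq:bock}). The crucial claim is that, under the identifications of Corollary \ref{cor:valuesofext}, this connecting map coincides with the natural surjection
\[Z_1/B_1 \twoheadrightarrow \Cl K/n\Cl K, \qquad (a,\mathfrak{a}) \mapsto [\mathfrak{a}],\]
which is well defined because $(a,\mathfrak{a}) \in Z_1$ forces $n[\mathfrak{a}] = 0$ in $\Cl K$ and because elements of $B_1$ project to principal ideals. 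To verify this I would apply the Horseshoe lemma to (\ref{eq:bock}) to produce a short exact sequence of free resolutions
\[0 \to \mathcal{E}_n \to \widetilde{\mathcal{E}}_{n^2} \to \mathcal{E}_n \to 0,\]
where $\widetilde{\mathcal{E}}_{n^2}$ has $\ZZ \oplus \ZZ$ in degrees $-1, 0$ with differential $\left(\begin{smallmatrix} n & -1 \\ 0 & n \end{smallmatrix}\right)$. Applying $\HOM(-,\mathcal{C})$ turns this into a bona fide short exact sequence of complexes whose associated connecting map on cohomology is $\beta_n^\sim$. A direct computation — lifting a cocycle $(a,\mathfrak{a}) \in K^* \oplus \Div K$ to an element such as $((a,1),(\mathfrak{a},0)) \in \HOM^1(\widetilde{\mathcal{E}}_{n^2},\mathcal{C})$, applying the total differential so that the obstruction to being a cocycle is carried by the off-diagonal $-1$ of $\widetilde{\mathcal{E}}_{n^2}$, and reading off the resulting class in $\Cl K/n\Cl K$ — produces exactly $[\mathfrak{a}]$. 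This explicit identification of the connecting morphism is the main obstacle; once it is in hand, everything else is formal.

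Combining these ingredients, for any $(a,\mathfrak{a}) \in \Ext^1_X(\ZZ/n\ZZ,\mathbb{G}_{m,X})$ one obtains
\[\langle f_X^*(c_2),(a,\mathfrak{a})\rangle = \langle \beta_n(f_X^*(c_1)),(a,\mathfrak{a})\rangle = \langle f_X^*(c_1),\beta_n^\sim(a,\mathfrak{a})\rangle = \Art_{L|K}(\mathfrak{a}),\]
which is the assertion of the lemma.
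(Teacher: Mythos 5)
Your proposal is correct, and its overall skeleton (naturality of the Bockstein, identification of $f_X^*(c_1)$ with $\Art_{L|K}$ via Artin reciprocity, and dualizing to compute $\beta_n^\sim \colon \Ext^1_X(\ZZ/n\ZZ,\mathbb{G}_{m,X}) \to \Ext^2_X(\ZZ/n\ZZ,\mathbb{G}_{m,X})$) coincides with the paper's. Where you diverge is in the key step of showing that $\beta_n^\sim$ is $(a,\mathfrak{a}) \mapsto [\mathfrak{a}]$. The paper factors the mod-$n$ Bockstein through the integral one: it compares the sequence $0 \to \ZZ \xrightarrow{n} \ZZ \to \ZZ/n\ZZ \to 0$ with $0 \to \ZZ/n\ZZ \to \ZZ/n^2\ZZ \to \ZZ/n\ZZ \to 0$ via the evident map of short exact sequences, writes $\beta_n^\sim = \beta^\sim \circ \pi$ with $\pi \colon \Ext^1_X(\ZZ/n\ZZ,\mathbb{G}_{m,X}) \to H^1(X,\mathbb{G}_{m,X}) \cong \Cl K$ sending $(a,\mathfrak{b})$ to $\mathfrak{b}$ and $\beta^\sim \colon \Cl K \to \Cl K/n\Cl K$ the reduction, and reads off the answer with no cocycle manipulation. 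You instead run a Horseshoe-lemma resolution $0 \to \mathcal{E}_n \to \widetilde{\mathcal{E}}_{n^2} \to \mathcal{E}_n \to 0$ (your matrix has cokernel $\ZZ/n^2\ZZ$, so this is a legitimate resolution) and compute the snake-lemma connecting map explicitly. Your route works but carries two small burdens the paper's avoids: you must justify that the degreewise-split short exact sequence of complexes $\Gamma(\HOM(-,\mathcal{C}))$ really computes the connecting map on $\Ext$ groups in the relevant degrees, i.e.\ you need the analogue of the quasi-isomorphism $\psi$ for $\ZZ/n^2\ZZ$ as well as for $\ZZ/n\ZZ$ (this holds by the same spectral-sequence argument, but should be said); and you must actually carry out the lift-and-differentiate computation rather than gesturing at it. The paper's factorization buys a computation-free argument at the cost of invoking the known descriptions of $H^1(X,\mathbb{G}_{m,X})$ and of the map induced by $\ZZ_X \to \ZZ/n\ZZ_X$; yours is more self-contained and makes the cochain-level mechanism visible.
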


\begin{proof}
Let us note that, by Artin reciprocity, the class $f_X^*(c_1) \in H^1(X,\ZZ/n\ZZ)$ corresponds to the Artin symbol $\Art_{L|K}.$ By functoriality, we need to show that the connecting homomorphism $\beta_n \colon H^1(X,\ZZ/n\ZZ) \rightarrow H^2(X,\ZZ/n\ZZ)$ of the short exact sequence 
  \begin{equation}\label{bockeq} 
    0 \rightarrow \ZZ/n\ZZ \xrightarrow{n} \ZZ/n^2 \ZZ_{X} \rightarrow \ZZ/n\ZZ \rightarrow 0\,,
  \end{equation} 
takes the class of $x$ to the functional $y \in H^2(X,\ZZ/n\ZZ) \cong H^1(\Hom(\mathcal{E}_n,\mathcal{C}))^\sim$ defined by $y(a,\mathfrak{a}) = \Art_{L|K}(\mathfrak{a}).$ Equivalently (by Artin--Verdier duality) we need to show that the connecting homomorphism $$\beta_n^\sim \colon \Ext^{1}_X(\ZZ/n\ZZ,\mathbb{G}_{m,X}) \rightarrow \Ext^2_X(\ZZ/n\ZZ,\mathbb{G}_{m,X}),$$ of the short exact sequence (\ref{bockeq}), takes $(a,\mathfrak{a}) \in \Ext^1_X(\ZZ/n\ZZ,\mathbb{G}_{m,X})$ to $\mathfrak{a} \in \Cl K/n \Cl K.$ To see this, consider first the diagram of exact sequences
  \[
    \begin{tikzcd} 
      0 \arrow[r] & \ZZ_{X} \arrow[d] \arrow[r,"n"]& \ZZ_{X} \arrow[r] \arrow[d] & \ZZ/n\ZZ \arrow[d,equals] \arrow[r] & 0 \\ 0 
      \arrow[r] & \ZZ/n\ZZ \arrow[r,"n"] & \ZZ/n^2\ZZ_{X} \arrow[r] & \ZZ/n\ZZ \arrow[r] & 0 \,.
    \end{tikzcd}
  \]
If we apply $\Ext_X(-,\mathbb{G}_{m,X})$, to the upper short exact sequence, we get a connecting homomorphism  $\beta^\sim \colon H^{i}(X,\mathbb{G}_{m,X}) \rightarrow \Ext^{i+1}_X(\ZZ/n\ZZ,\mathbb{G}_{m,K})$. It is then clear that if 
  \[
    \pi\colon\Ext^i_X(\ZZ/n\ZZ,\mathbb{G}_{m,X}) \rightarrow H^i(X,\mathbb{G}_m)
  \] 
is the map induced by the projection $\ZZ_X \rightarrow \ZZ/n\ZZ\,$, then $\beta_n^\sim = \beta^\sim \circ \pi.$ We now begin to compute $\beta_n^\sim$ by first noting that the map $\beta^\sim \colon H^1(X,\mathbb{G}_{m,X}) \rightarrow \Ext^2_X(\ZZ/n\ZZ,\mathbb{G}_{m,X})$ is the map which, under the isomorphisms $H^1(X,\mathbb{G}_{m,X}) \cong \Cl K$ and 
  \[
    \Ext^2_X(\ZZ/n\ZZ,\mathbb{G}_{m,X})\cong \Cl K/n\Cl K\,,
  \] 
takes $I \in \Cl K$ to its reduction in $\Cl K/n\Cl K.$ It is easy to see that 
  \[
    \pi \colon \Ext^1_X(\ZZ/n\ZZ,\mathbb{G}_{m,X}) \rightarrow H^1(X,\mathbb{G}_{m,X})
  \] 
is the map which, under the isomorphisms $\Ext^1_X(\ZZ/n\ZZ,\mathbb{G}_{m,X}) \cong H^1(\Hom(\mathcal{E}_n,\mathcal{C}))$ and $H^1(X,\mathbb{G}_{m,X}) \cong \Cl K$, takes the pair $(a, \mathfrak{b})$ in $\Ext^1_X(\ZZ/n\ZZ,\mathbb{G}_{m,X})$ to $\mathfrak{b} \in \Cl K$. By composition, we see that $\beta_n^\sim = \beta^\sim \circ \pi$ takes $(a,\mathfrak{b})$ to $\mathfrak{b} \in \Cl K/n \Cl K$ and we have our lemma.
\end{proof}

We can now apply Corollary \ref{cor:cup1nonzero} together with Lemma \ref{lemma:bock} to see that $f_X^*(c) = f_X(c_1) \cup f_X(c_2)$ is non-zero if and only if $\Art_{L|K}(\mathfrak{a}) \neq 0$, where $\mathfrak{a}$ is any fractional ideal such that $-\divis(v) = n\mathfrak{a}$. This allows us to give a criterion for when Kim's invariant vanishes.

\begin{prop}
Let $K$ be a totally imaginary number field containing a primitive $n$th root of unity and let $X= \Spec \OO_K$ be its ring of integers. Let $C_n=\ZZ/n\ZZ$, let $c_1 \in H^1(C_n,\ZZ/n\ZZ) \cong \Hom_{\Ab}(\ZZ/n\ZZ,\ZZ/n\ZZ)$ correspond to the identity and $c_2 = \beta_n(c_1) \in H^2(C_n,\ZZ/n\ZZ)$ be its image under the Bockstein homomorphism. Suppose that we have a continuous homomorphism $f\colon \pi_1(X,x) \rightarrow C_n$, corresponding to the unramified Kummer extension $L/K$, where $L=K(v^{1/n})$ for some $v \in K^\times$ such that there is an $\mathfrak{a} \in \Div K$ satisfying $n \mathfrak{a} = - \divis(v)$. Then Kim's invariant, $\inv(f_X^*(c)) \in \ZZ/n\ZZ$ vanishes if and only if $\mathfrak{a}$ is in the image of the norm map $N_{L|K} \colon \Cl L \rightarrow \Cl K$, i.e. if and only if $\Art_{L|K}(\mathfrak{a})=0$, where $\Art_{L|K}$ means the Artin symbol. 
\end{prop}

 \begin{proof}
 This follows immediately from the fact that $f_X^*(c) = f_X(c_1) \cup f_X(c_2)$ vanishes if and only if $\Art_{L|K}(\mathfrak{a})=0$, which holds if and only if $\mathfrak{a}$ is in the image of the norm map.  Since $\inv$ is an isomorphism, we then have that $\inv(f_X^*(c)) \neq 0$, so we have our proposition.
 \end{proof}

 \begin{rk}
 Formulas for this invariant have already been obtained in \cite{BleherUnramified} and \cite{KimAbelian}. Our formula differs from the one in \cite{BleherUnramified}, but the formula we gave can be derived from Proposition 3.10 in \cite{KimAbelian}, together with the quadratic form formula for the Chern--Simons invariant on page 7. The fact that our formula can be obtained from results in \cite{KimAbelian} was pointed out to us by Minhyong Kim.
\end{rk}

\bibliographystyle{dary}
\bibliography{cupproducts}{}
\end{document}